\newcommand{\Dchaintwo}[3]{%
\setlength{\unitlength}{.08cm}
\rule[-3\unitlength]{0pt}{8\unitlength}
\begin{picture}(14,5)(0,3)
\put(1,2){\circle{2}}
\put(2,2){\line(1,0){10}}
\put(13,2){\circle{2}}
\put(1,5){\makebox[0pt]{\scriptsize #1}}
\put(7,4){\makebox[0pt]{\scriptsize #2}}
\put(13,5){\makebox[0pt]{\scriptsize #3}}
\end{picture}
}
\newcommand{\ad}{\operatorname{ad}}
\newcommand{\ord}{\operatorname{ord}}
\newcommand{\Aff}{\operatorname{Aff}}
\newcommand{\Aut}{\operatorname{Aut}}
\newcommand{\car}{\operatorname{char}}
\newcommand{\End}{\operatorname {End}}
\newcommand{\gr}{\operatorname {gr}}
\newcommand{\id}{\operatorname {id}}
\newcommand{\Ind}{\operatorname{Ind}}
\newcommand{\Inn}{\operatorname{Inn}}
\newcommand{\Rad}{\operatorname{Rad}}
\newcommand{\lspan}{\operatorname{span}}
\newcommand{\supp}{\operatorname{supp}}
\newcommand{\bomega}{w}
\newcommand{\aut}{\mathfrak{t}}
\newcommand{\tetra}{\mathscr{T}}
\newcommand{\BG}[1]{\mathbb{B}_{#1}}
\newcommand{\mbG}{\mathbb{G}}
\newcommand{\FK}{\tt{FK}}
\newcommand{\cB }{\mathcal{B}}
\newcommand{\cF }{\mathcal{F}}
\newcommand{\hurw}{\mathcal{H}}
\newcommand{\cO }{\mathcal{O}}
\newcommand{\Oc}{\mathcal{O}}
\newcommand{\cR }{\mathcal{R}}
\newcommand{\Ss}{\mathcal{S}}
\newcommand{\cW}{\mathcal{W}}
\newcommand{\cX }{\mathcal{X}}
\newcommand{\co }[1]{\operatorname {co}\,#1}
\newcommand{\fie}{\Bbbk}
\newcommand{\Fie}{\mathbb{\fie }}
\newcommand{\fm }{\mathfrak{m}}
\newcommand{\NA}{\mathscr{B}}
\newcommand{\ndF}{\mathbb{F}}
\newcommand{\ndN}{\mathbb{N}}
\newcommand{\ndZ}{\mathbb{Z}}
\newcommand{\ot }{\otimes }
\newcommand{\SG }[1]{\mathbb{S}_{#1}}
\newcommand{\AG }[1]{\mathbb{A}_{#1}}
\newcommand{\yd}[1]{\prescript{#1}{#1}{\mathcal{YD}}}
\newcommand{\Dchainthree}[6]{
\rule[-3\unitlength]{0pt}{8\unitlength}
\begin{picture}(26,5)(0,3)
\put(1,2){\ifthenelse{\equal{#1}{l}}{\circle*{2}}{\circle{2}}}
\put(2,2){\line(1,0){10}}
\put(13,2){\ifthenelse{\equal{#1}{m}}{\circle*{2}}{\circle{2}}}
\put(14,2){\line(1,0){10}}
\put(25,2){\ifthenelse{\equal{#1}{r}}{\circle*{2}}{\circle{2}}}
\put(1,5){\makebox[0pt]{\scriptsize #2}}
\put(7,4){\makebox[0pt]{\scriptsize #3}}
\put(13,5){\makebox[0pt]{\scriptsize #4}}
\put(19,4){\makebox[0pt]{\scriptsize #5}}
\put(25,5){\makebox[0pt]{\scriptsize #6}}
\end{picture}}
\numberwithin{equation}{section}
\newtheorem{thm}{Theorem}[section]
\newtheorem{lem}[thm]{Lemma}
\newtheorem{pro}[thm]{Proposition}
\newtheorem{cor}[thm]{Corollary}
\theoremstyle{definition}
\newtheorem{exa}[thm]{Example}
\theoremstyle{remark}
\newtheorem{rem}[thm]{Remark}
\newtheorem{step}{Step}
\newtheorem{case}{Case}
\newcommand{\vbd}{U}
\newcommand{\fiet}{\Bbbk^{\times}}
\newcommand{\bq}{\mathfrak q}
\title[]{Pointed Hopf algebras of odd dimension 
and Nichols algebras over solvable groups}
\author{N. Andruskiewitsch}
\author{I. Heckenberger}
\author{L. Vendramin}
\address[N.~Andruskiewitsch]{Facultad de Matem\'atica, Astronom\'ia y F\'isica,
Universidad Nacional de C\'ordoba. CIEM -- CONICET. 
Medina Allende s/n (5000) Ciudad Universitaria, C\'ordoba, Argentina}
\email{nicolas.andruskiewitsch@unc.edu.ar}
\address[I. Heckenberger]{Philipps-Universit\"at Marburg,
FB Mathematik und Informatik,
Hans-Meer\-wein-Stra\ss e,
35032 Marburg, Germany.}
\email{heckenberger@mathematik.uni-marburg.de}
\address[L. Vendramin]{Department of Mathematics and Data
Science, Vrije Universiteit Brussel, Pleinlaan 2, 1050 Brussels, Belgium}
\email{Leandro.Vendramin@vub.be}
\keywords{Hopf algebras, quantum groups, Nichols algebras, affine racks}
\subjclass[2020]{16T05, 18M15}
\begin{document}

\begin{abstract}
We classify finite-dimensional Nichols algebras of Yetter-Drinfeld modules
with indecomposable support over finite solvable groups in characteristic 0,
using a variety of methods including reduction to positive characteristic.
As a consequence, all Nichols algebras over groups of odd order are of diagonal type, which allows us to describe all pointed Hopf algebras of odd dimension.
\end{abstract}

\maketitle

\section{Introduction}\label{sec:intro}

\subsection{Pointed Hopf algebras of odd dimension}
A  Hopf algebra is pointed if every simple comodule is one-dimensional.
The class of pointed Hopf algebras encompasses the quantized enveloping algebras
of Lie (super)algebras and their finite-dimensional versions.
Two basic invariants of a pointed Hopf algebra $H$ are:

\begin{itemize} [leftmargin=*]\renewcommand{\labelitemi}{$\circ$}
\item the group $G(H)$ of group-like elements,

\item the infinitesimal braiding $V$ of $H$, a Yetter-Drinfeld module  
over $\fie G(H)$, see \cite{MR1913436}.
\end{itemize}

These invariants and, fundamentally, the Nichols algebra $\NA(V)$ constitute
the core of the approach to the classification of pointed Hopf algebras
proposed in \cite{MR1913436}. One of the results obtained with this approach 
is the classification of the finite-dimensional pointed Hopf algebras $H$ in characteristic 0
such that $G(H)$ is abelian and has order coprime to $210$ \cite{MR2630042}. 
Here we extend the mentioned result
as follows.

\begin{thm}\label{mainthm:pointed-odd}
Let $\fie$ be an algebraically closed field of characteristic 0.
Let $H$ be a pointed Hopf algebra such that $d = \dim H$ is odd.
Then:

\begin{enumerate}[leftmargin=*,label=\rm{(\alph*)}]

\item\label{item:pointed-odd-0}  
the infinitesimal braiding $V$ of $H$ is a braided vector space of diagonal type;

\medbreak
\item\label{item:pointed-odd-1} 
$H$ is generated by group-like and skew-primitive elements;

\medbreak
\item\label{item:pointed-odd-2} 
$H$ is a cocycle deformation of $\NA(V) \# \fie G(H)$;

\medbreak
\item\label{item:pointed-odd-3} 
If $\gcd(d, 105) = 1$, then the defining relations of $H$ are 
quantum Serre relations, linking relations and powers of the root vectors.
\end{enumerate}
\end{thm}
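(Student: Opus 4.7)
The plan is to deduce this theorem from the paper's main classification (announced in the abstract) of finite-dimensional Nichols algebras of indecomposable support over finite solvable groups, together with the now-standard lifting-type results for Nichols algebras of diagonal type. First I set up the framework. Since $H$ is pointed of finite dimension $d$, Nichols--Zoeller forces $|G(H)|\mid d$, so $G:=G(H)$ has odd order and, by the Feit--Thompson theorem, is solvable. The infinitesimal braiding $V$ is a Yetter--Drinfeld module over $\fie G$ and $\NA(V)$ embeds in the associated graded $\gr H$, hence is finite-dimensional.

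For part (a), I would decompose $V$ as a direct sum of Yetter--Drinfeld submodules with indecomposable support; each summand $V_i$ has $\NA(V_i)$ finite-dimensional and is therefore of diagonal type by the paper's main theorem applied to the solvable group $G$. A standard Yetter--Drinfeld compatibility argument then shows that the supports of distinct $V_i$ centralise each other and act diagonally across the decomposition, so $V$ itself is of diagonal type.

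Granted (a), statements (b) and (c) follow from Angiono's results for Nichols algebras of diagonal type in characteristic zero: any finite-dimensional pointed Hopf algebra with such infinitesimal braiding is generated in degree one and is a cocycle deformation of $\NA(V)\#\fie G$. For (d), the hypothesis $\gcd(d,105)=1$ combined with $d$ odd gives $\gcd(d,210)=1$, so the order of $G$ and the orders of the braiding entries $q_{ij}$ are coprime to $2,3,5,7$, and Angiono's presentation of finite-dimensional diagonal Nichols algebras (and its lift as in \cite{MR2630042}) reduces to quantum Serre and root-vector-power relations on the Nichols side plus linking relations coming from the lifting. The main obstacle is clearly (a): reducing to diagonal type is the whole content of the paper's main theorem and will presumably use its full machinery, including the announced passage to positive characteristic; once (a) is in hand, (b)--(d) are essentially an invocation of results already in the literature.
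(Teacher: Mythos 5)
Your overall architecture matches the paper's: parts (b)--(d) are indeed quoted from the literature once (a) is established (the paper cites \cite{MR3181554} for (b), \cite{MR3908852} for (c) and \cite[Theorem 6.2]{MR2630042} for (d)), and (a) is the real content, reduced via Nichols--Zoeller and Feit--Thompson to a statement about Yetter--Drinfeld modules over odd-order solvable groups. The gap is in your derivation of (a) from the main classification theorem. That theorem (Theorem~\ref{thm:char_zero-solvable}) only treats modules whose support is a \emph{single conjugacy class generating the group}; applying it piecewise to a decomposition of $V$ controls each piece but says nothing about the braiding \emph{between} pieces. Your assertion that ``a standard Yetter--Drinfeld compatibility argument shows that the supports of distinct $V_i$ centralise each other'' is not a consequence of the Yetter--Drinfeld axioms: two elements $g,h$ of $\supp V$ lying in different components may satisfy $ghg^{-1}\neq h$ even though each component, taken by itself, is a singleton or an abelian subrack --- consider two non-central conjugacy classes of an extraspecial $p$-group, each of which is an abelian subrack while their union is not an abelian rack. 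So even after every indecomposable component of $\supp V$ has been shown trivial, $V$ need not be of diagonal type, and no soft argument closes this.

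What closes it in the paper (proof of Theorem~\ref{thm:solvable-210-order-abelian-or-typeC}) is a second, independent classification input for \emph{decomposable} supports. Given $r,s\in\supp V$ with $r\triangleright s\neq s$, set $L=\langle r,s\rangle$, $R=\Oc_r^L$, $S=\Oc_s^L$. If $R\neq S$, then $R\sqcup S$ is a decomposable subrack whose components have size at least $3$ because their cardinalities are nontrivial divisors of the odd number $|L|$; the rack is then of type C, and Lemma~\ref{th:typeC} (resting on \cite{MR3656477}) forces $\dim\NA(V)=\infty$. Only in the remaining case $R=S$ does the main theorem enter, applied to the braided subspace supported on $R$ over the subgroup $L$ (not over $G$), where the oddness of $|L|$ excludes every surviving example. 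Your proposal omits the type C ingredient entirely, and that is precisely the step needed to make distinct parts of the support commute. Once it is supplied, the passage from ``$\supp V$ is an abelian rack'' to ``$V$ is of diagonal type'' is the routine simultaneous diagonalisation you have in mind, and (b)--(d) go through as you say.
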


In other words, Theorem \ref{mainthm:pointed-odd} characterizes pointed Hopf 
algebras of odd dimension, that can be explicitly described up to group-theoretical information. 

\medbreak
Notice that if $\gcd(d,3) = 1$, then $V$ is of Cartan type;
see Remark \ref{rem:diagonal-odd-order}.

\medbreak
Assume that \ref{item:pointed-odd-0} is proven. Then
\ref{item:pointed-odd-1} follows from  \cite{MR3181554}, or \cite[Theorem 5.5]{MR2630042} when  $\gcd(d, 105) = 1$; 
\ref{item:pointed-odd-2} follows from \cite{MR3908852};  \ref{item:pointed-odd-3} follows from \cite[Theorem 6.2]{MR2630042}.

\medbreak
Thus, the main contribution of the present paper to 
Theorem~\ref{mainthm:pointed-odd} is to prove 
\ref{item:pointed-odd-0}, which is Theorem~
\ref{thm:solvable-210-order-abelian-or-typeC}. 
The proof of this last result follows from 
Theorem \ref{mainthm:nichols-solvable}, discussed in the next subsection,
since $G(H)$ is solvable by the Feit-Thompson Theorem.
Indeed, if $\dim H$ is odd, then $\vert G(H) \vert$ is odd by the 
Nichols-Zoeller Theorem.
Conversely, if $G$ is a group with odd order, then
any finite-dimensional pointed Hopf algebra $H$ with $G(H) \simeq G$ 
has odd dimension since the finite-dimensional Nichols algebras over $G$
are of diagonal type hence they have odd dimension (see Remark~\ref{rem:diagonal-odd-order});
in fact Theorem \ref{thm:solvable-210-order-abelian-or-typeC} only requires that $G$ has odd order.

\subsection{Nichols  algebras over solvable groups}
The study of the finite-dimensional Nichols algebras of Yetter-Drinfeld modules 
over finite non-abelian groups has a long history that we summarize briefly.
Let us assume first that $\car \fie = 0$.

\medbreak
The first examples are the so called Fomin-Kirillov algebras
$\FK_n$ associated to transpositions in $\SG{n}$ where $n = 3,4,5$;
they were introduced in \cite{MR1667680} in 1995 and realized later
as braided Hopf algebras by Fomin and Procesi. These algebras
and two relatives for $n = 4, 5$ were
discovered and interpreted as Nichols algebras independently by Milinski and
Schneider around 1996,
published later in \cite{MR1800714}.

\medbreak
Between 2000 and 2001, Gra\~na discovered six more examples, appeared in \cite{MR1800709,MR1994219}:
one related to  (any of) the conjugacy classes of $3$-cycles in $\AG{4}$
(which are isomorphic as racks and denoted $\tetra$),
two related to affine racks with $5$ elements, 
two related to affine racks with $7$ elements and 
one related to the class of $4$-cycles in $\mathbb S_4$.

\medbreak
In  2012, a new example related to $\tetra$ was presented in \cite{MR2891215}. The Nichols algebras above are discussed in Examples~\ref{exa:affine}, \ref{exa:T}, \ref{exa:S4a}
and \ref{exa:S4b} and Remark~\ref{rem:FK5}, 
see Subsection~\ref{subsec:main-examples};  
they all arise from solvable groups except $\FK_5$ and its relative. 

\medbreak
In prime characteristic $p$, the Nichols algebras  above remain
finite-dimensional, 
as is apparent from Section \ref{sec:-char0}; in fact they have the same dimension as in characteristic 0 except one of the related with $\tetra$
when $p=2$, see \cite{MR2803792}. 
Besides there is an example associated to transpositions in $\SG{3}$ 
of dimension 432 when $p=2$, see \cite{MR2891215}.

\medbreak
Since 2006, two lines of research on 
Nichols algebras over finite
groups were pursued: the classification of the finite-dimensional ones corresponding to
semisimple, not simple, Yetter-Drinfeld modules, 
achieved in \cite{MR3605018,MR3656477} (valid in arbitrary characteristic); and the determination of
the Nichols algebras over simple groups with infinite dimension,
see \cite{MR4664370,MR2786171,MR2745542} and references therein.

\medbreak
In 2023, a new point of view was introduced in \cite{MR4729697}, where 
the finite-dimensional Nichols algebras related to a simple rack with
a prime number of elements were classified using several different techniques, including reduction to positive characteristic;
there are no new examples.

\medbreak Here is the main result of this paper, see Theorem~\ref{thm:char_zero}:

\begin{thm}\label{mainthm:nichols-solvable}
Let $\fie$ be a field of characteristic 0.
Let $G$ be a finite group whose solvable radical contains properly the hypercenter.
Let $V$ be a Yetter-Drinfeld module over $\fie G$ such that
\begin{itemize}[leftmargin=*]\renewcommand{\labelitemi}{$\circ$}
\item the support of $V$ is a conjugacy class of $G$ generating $G$,
\item the dimension of the Nichols algebra $\NA(V)$ is finite.
\end{itemize}
Then $V$ is absolutely simple  
and isomorphic to one of the Yetter-Drinfeld modules
of Examples~\emph{\ref{exa:affine}, \ref{exa:T}, \ref{exa:S4a}} 
and \emph{\ref{exa:S4b}}
in Subsection~\ref{subsec:main-examples}. 
\end{thm}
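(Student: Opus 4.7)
The plan is to combine the structural hypothesis on $G$ with rack-theoretic obstructions to force the support rack $\Oc = \supp V$ into a very short list of shapes, and then match the Yetter--Drinfeld structure on it against the listed examples.

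\emph{Step one: exploit $Z^*(G) \subsetneq R(G)$.} I would pick a minimal normal subgroup $N \trianglelefteq G$ contained in $R(G)$ but not in $Z^*(G)$. Such an $N$ is elementary abelian of some order $p^k$, and $G$ acts on it non-centrally by conjugation. Since $\Oc$ generates $G$, it must lie over a single $N$-coset class in $G/N$, and the fibers of $\Oc \twoheadrightarrow \overline{\Oc}\subseteq G/N$ are controlled by the orbits of the action of the point centralizer on $N$. The outcome is a natural affine subrack structure inside $\Oc$: one sees copies of $\Aff(N,T)$ for some $T\in \Aut(N)$ embedded in $\Oc$, as well as a rigid relation between the centralizers of $\Oc$ and those of $\overline{\Oc}$.

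\emph{Step two: apply the type~\mbox{D} and type~\mbox{C} rack obstructions} (and their refinements from \cite{MR4729697}, \cite{MR3656477}) to rule out most possibilities: whenever $\Oc$ contains a suitable ``large'' subrack, $\dim\NA(V) = \infty$ for every Yetter--Drinfeld structure supported on $\Oc$. Combined with the affine structure produced in step one, a systematic elimination should shrink the candidates to: the affine racks of order $5$ and $7$, the rack $\tetra$ of $3$-cycles in $\AG{4}$, and the racks of transpositions and of $4$-cycles in $\SG{4}$. This mirrors, but is technically harder than, the strategy employed in \cite{MR4729697} for simple racks of prime cardinality, because now $G$ is allowed to be assembled from several layers.

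\emph{Step three: identify the Yetter--Drinfeld modules.} For each surviving rack I would determine which irreducible representations of the centralizer at a point yield a Nichols algebra of finite dimension. The Nichols algebra of the ``diagonal'' subobject at a fixed point is of diagonal type and falls under the classification of \cite{MR2630042}, so the Weyl-equivalence class of its Dynkin diagram must be on the finite list; this kills most representations of the centralizer and leaves exactly the four examples. Absolute simplicity of $V$ then comes for free: any non-absolutely-simple $V$ with the same support would split after extending scalars into YD submodules with identical support, each with finite-dimensional Nichols algebra, and the list does not allow such complements.

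The hardest part, I expect, is the exclusion in step two when $N$ is an elementary abelian $p$-group of large order or higher rank and when $G/R(G)$ acts non-trivially on $N$. Constructing a type~\mbox{D} subrack by hand becomes delicate in these situations, and the argument will most likely fall back on the reduction to positive characteristic alluded to in Section~\ref{sec:-char0}, using the better-behaved combinatorics of Nichols algebras in characteristic $p$ as a surrogate obstruction whenever the purely rack-theoretic criteria are not sharp enough.
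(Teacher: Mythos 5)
Your outline identifies the right starting data (a prime attached to the gap between $\Rad(G)$ and $Z^*(G)$, a reduction to positive characteristic, rack obstructions), but the central step is missing rather than merely hard. In Step two you assert that type C/type D subrack obstructions, combined with the affine pieces from Step one, ``should'' shrink the support to the short list, and you then concede that the argument ``will most likely fall back on the reduction to positive characteristic.'' In the paper that reduction is not a fallback but the entire engine, and it works by a mechanism your sketch does not contain: after reducing $V$ modulo a prime $p$ dividing $|F(G/Z^*(G))|$ via an $R$-order, one takes a minimal normal $p$-subgroup $\varGamma$ and filters $A=\NA(V_{\ndF})\#\ndF G$ by powers of the Hopf ideal $A(J_\varGamma+V)$ (Proposition~\ref{cor:gamma_2}); the associated graded Hopf algebra surjects onto the Nichols algebra of the \emph{decomposable} Yetter--Drinfeld module $(\varGamma\ot_{\ndF_p}\ndF)\oplus V/\ad J_\varGamma(V)$ over $\ndF(G/\varGamma)$, and the known positive-characteristic classifications of decomposable modules (Theorem~\ref{pro:diagtypewithcoinvariants}, Theorem~\ref{pro:nonabtypewithcoinvariants}, Proposition~\ref{pro:rank2}) then force $p\in\{2,3,5,7\}$ and pin down the rack. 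No purely rack-theoretic type C/D elimination achieving this is known. Your Step one also overstates what is available: the $N$-orbit of a point of $\supp V$ is isomorphic to $\Aff(N,T)$ only when conjugation by that point acts fixed-point freely on $N$ (Lemma~\ref{lem:Aff}), which you have not arranged.

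There is a second gap in Step three. Knowing the rack, you propose to cut down the centralizer representation by looking at the diagonal braided subspace at a fixed point and invoking \cite{MR2630042}; but in all surviving cases $\dim V_g=1$, so that subspace carries almost no information (the paper uses a two-dimensional diagonal subspace $V_g\oplus V_h$ only for $\Oc^4_2$). The actual elimination of the remaining cocycles is done by a different device: Lemma~\ref{lem:concrete_new_trick}, which compares an odd-length Hurwitz orbit in degree two with its reduction modulo a maximal ideal and produces extra primitive elements (hence $\dim\NA(V)=\infty$) unless a precise scalar identity of the form $1+g^{|\hurw|}x^{-|\hurw|}=0$ holds; this is what Lemmas~\ref{lem:T}, \ref{lem:Tcubic} and \ref{lem:4cyclesS4} implement, and without it cocycles such as $(\epsilon,q)=(1,\omega)$ on $\tetra$ are not excluded by anything in your outline. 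Finally, absolute simplicity is not ``for free from the list'': it must be established first (Corollary~\ref{cor:absolutely_irreducible}, via the dimension bounds of Proposition~\ref{pro:consequence}), since the order-and-reduction step already requires $V$ to be simple.
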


We spell out the consequence for solvable groups
in Theorem~\ref{thm:char_zero-solvable}, that implies in turn Theorem
\ref{thm:solvable-210-order-abelian-or-typeC} for groups of odd order.

\medbreak
We observe that the classification of the finite-dimensional Nichols algebras
over solvable groups of even order does not present major mathematical obstacles
and follows from \cite{MR3605018,MR3656477} and Theorem~\ref{thm:char_zero-solvable}. 
The classification of finite-dimensional Nichols algebras
over non-solvable groups remains open: 
for instance, it is not known whether the Fomin-Kirillov algebra $\FK_6$
has finite dimension or not.

\subsection{The roadmap of the proof of Theorem~\ref{mainthm:nichols-solvable}}\label{subsec:roadmap}

Let $G$ and $V$ be as in Theorem~\ref{mainthm:nichols-solvable}. 
Then $V$ is absolutely simple,
as a consequence of \cite{MR3656477} (see Corollary~\ref{cor:absolutely_irreducible}). Choose a Yetter-Drinfeld
order $V_R$ of $V$ for a suitable Dedekind domain $R$ 
and a prime number $p$ dividing the order of the Fitting subgroup of $G/Z^*(G)$. 
Let $\fm$ be a maximal ideal of $R$ such that $p \in \fm$, 
 $\ndF = R/ \fm$ and $V_{\ndF}= \ndF\ot _R V_R$. 
Then $V_{\ndF}$ is a Yetter-Drinfeld module over $\ndF G$
and $\NA (V_{\ndF})$  is  finite-dimensional because $\NA(V)$ is so.
Then Theorem~\ref{thm:char_p} allows us to recover all possibilities of $p$ and the reduction $V_{\ndF}$; there are only finitely many of them. 

\medbreak
Theorem~\ref{thm:char_p} is a classification result of Yetter-Drinfeld
modules $U$ with analogous hypothesis as Theorem~\ref{mainthm:nichols-solvable} 
but over a field of positive characteristic.
Concretely, $\NA(U)$ 
admits a decreasing Hopf algebra filtration such that the associated graded Hopf algebra
has a direct sum of at least two simple Yetter-Drinfeld submodules of skew-primitive 
elements (because a group-like whose order is the characteristic of the field gives rise to a primitive element). The corresponding Nichols algebra is then finite-dimensional. Elaborating
on several classification results in positive characteristics collected in Section~\ref{sec:prime-char} (e.g. \cite{MR3656477,MR3625122,MR4099895,MR3313687})  
we conclude that $U$ isomorphic to one of the few examples. 

\medbreak
Finally, in order to determine all possibilities for $V$, we compare the Hurwitz orbits in the degree two part of $\NA (V)$ with their reductions modulo suitable primes. By Lemma~\ref{lem:concrete_new_trick}, if the ranks of these Hurwitz orbits differ and some technical assumptions are fulfilled, then $\NA (V)$ is shown to be infinite-dimensional. The remaining Nichols algebras have been already known to be finite-dimensional, and form the list of examples in Theorem~\ref{mainthm:nichols-solvable}.

\subsection{Affine simple racks}\label{subsec:simple-racks-intro}
For an introduction to racks we refer to Section~\ref{subsec:racks}. 
The problem of determining the simple racks $X$ for which there exist
a finite group $G$ and $V \in \yd{\fie G}$ such that
$\dim \NA(V) < \infty$ and $\supp V = X$ was raised in \cite{MR2786171}.
Assume  that $\car \fie = 0$. See \cite{MR4664370} and references therein for partial results on this question. The problem was solved in \cite{MR4729697}
when $\vert X \vert$ is prime. Here we give the answer when $X$ is 
 affine as another consequence of Theorem \ref{mainthm:nichols-solvable}.

\begin{thm} \label{thm:list-affine-simple-racks}
Let $X$ be an affine simple rack such that there exist
a finite group $G$ and $V \in \yd{\fie G}$ such that 
$\dim \NA(V) < \infty$ and $\supp V \simeq X$ as racks. 
Then $X \simeq \Aff(\varGamma, \aut)$ with $\varGamma = \ndF_q$, $q$ 
a power of a  prime, and 
$\aut$ being multiplication by $d \in \ndF_q^{\times}$, where 
\begin{align}\label{eq:list-affine-simple-racks}
(\ndF_q,d)\in\{(\ndF_3,2), (\ndF_4,\bomega), (\ndF_5,2),
(\ndF_5,3),(\ndF_7,3),(\ndF_7,5)\},
\end{align}
where $\bomega \in \mathbb F_4\backslash \{0,1\}$.
\end{thm}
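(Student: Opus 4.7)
The strategy is to reduce Theorem~\ref{thm:list-affine-simple-racks} to Theorem~\ref{mainthm:nichols-solvable} by showing that any realization of an affine simple rack as a generating conjugacy class in a finite group automatically satisfies the hypothesis of the latter, and then to read off the affine supports from the four families of examples.

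\medbreak
Assume $V \in \yd{\fie G}$ with $\supp V \simeq X = \Aff(\varGamma,\aut)$ and $\dim \NA(V) < \infty$. Since $X$ is simple and $\supp V$ is a union of conjugacy classes of $G$, it is in fact a single class; replacing $G$ by the subgroup $\langle \supp V \rangle$ does not affect $\NA(V)$, and $\supp V$ remains a single conjugacy class of this subgroup because the conjugation action factors through $\Inn(X)$, which acts transitively on $X$. So I may assume $G = \langle \supp V \rangle$. Simplicity of $X$ forces $\aut \neq \id$ and $1 - \aut \in \Aut(\varGamma)$; the formula $x \triangleright y = \aut(y) + (1-\aut)(x)$ yields $\Inn(X) \cong \varGamma \rtimes \langle \aut \rangle$. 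The conjugation map $G \to \Aut(X)$ surjects onto $\Inn(X)$, and its kernel equals $Z(G)$, because any element centralizing $\supp V$ centralizes $G = \langle \supp V \rangle$. Hence $G/Z(G) \cong \varGamma \rtimes \langle \aut \rangle$; this is metabelian, and since $1-\aut$ is invertible, its lower central series stabilizes at $\varGamma \neq 0$, so it is not nilpotent. A central extension of a solvable (respectively, non-nilpotent) group is again solvable (respectively, non-nilpotent), so $G$ is solvable with $Z^{*}(G) \neq G$, meeting the hypothesis of Theorem~\ref{mainthm:nichols-solvable}.

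\medbreak
Theorem~\ref{mainthm:nichols-solvable} now identifies $V$ with a Yetter-Drinfeld module of one of Examples~\ref{exa:affine}, \ref{exa:T}, \ref{exa:S4a}, \ref{exa:S4b}. The supports of the modules in Examples~\ref{exa:S4a} and \ref{exa:S4b} have $6$ elements in $\SG{4}$; but the only candidate affine structure on a set of $6$ elements is $\Aff(\ndZ/6,-1)$, which contains the proper subrack $\{0,2,4\}$ and is therefore not simple. Example~\ref{exa:T} concerns $\tetra$, which is isomorphic as a rack to $\Aff(\ndF_4,\bomega)$. Together with the affine racks of Example~\ref{exa:affine}, this produces exactly the six pairs of \eqref{eq:list-affine-simple-racks}. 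The main obstacle is the structural step: showing that the bare assumption ``$G$ is generated by a conjugacy class isomorphic to $X$'' already forces $G$ to be solvable and non-nilpotent. The isomorphism $G/Z(G) \cong \Inn(X) = \varGamma \rtimes \langle \aut \rangle$ makes this clean, avoiding a case-by-case analysis of the candidate pairs $(\varGamma,\aut)$.
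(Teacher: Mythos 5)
Your argument is correct and follows the paper's strategy in outline: reduce to the classification over solvable groups (Theorem~\ref{mainthm:nichols-solvable}, i.e.\ Theorem~\ref{thm:char_zero}) by checking the group-theoretic hypothesis, then sift the four families of examples for affine simple supports. The differences are in the supporting steps. Where you establish solvability and non-nilpotency of $G$ directly from $G/Z(G)\simeq\Inn X\simeq\varGamma\rtimes\langle\aut\rangle$, the paper routes through Corollary~\ref{cor:abelian_centralizers} (abelian centralizers, built on the enveloping-group analysis of Proposition~\ref{pro:affine-derived}) together with Lemma~\ref{lem:nilpotent} (a nilpotent group generated by a conjugacy class is cyclic); your version is shorter and equally valid, granted Remark~\ref{rem:affine_indec} that an indecomposable affine rack has fixed-point-free $\aut$, which is exactly what lets you conclude $\id-\aut\in\Aut\varGamma$ and the non-stabilizing lower central series. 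For the size-$6$ examples the paper argues that $\Oc^4_2$ and $\Oc^4_4$ are not affine at all (were they affine, $[\SG{4},\SG{4}]=\AG{4}$ would be nilpotent by Proposition~\ref{pro:affine-derived}\ref{item:affine-derived-2}); you instead observe that the only non-trivial affine rack on $6$ elements is $\Aff(\ndZ/6,-1)$ and that it is not simple. That conclusion is right, but your justification --- ``contains the proper subrack $\{0,2,4\}$, therefore not simple'' --- is not a valid implication as stated: simple racks (e.g.\ conjugacy classes in non-abelian simple groups) routinely contain proper subracks. What you should say is that $\Aff(\ndZ/6,-1)=\{0,2,4\}\sqcup\{1,3,5\}$ is a disjoint union of two subracks, hence decomposable, hence not simple (equivalently, it surjects onto $\Aff(\ndZ/3,-1)$; or note $\aut=-1$ fixes $3\neq 0$, so the rack is decomposable by Remark~\ref{rem:affine_indec}). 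This is a one-line repair, not a genuine gap.
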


See Subsection~\ref{subsec:simple-racks-appl} for the proof and some consequences.

\subsection{Notations}
Throughout $\fie$ is a field; assumptions about $\fie$ will be imposed when appropriate. The finite field with $q$ elements is denoted by $\ndF_q$.
If $R$ is a ring, then $R^{\times}$ denotes the group of units of $R$.

Let $G$ be a group. As usual, $\varGamma \leq G$ expresses that $\varGamma$ 
is a subgroup of $G$, while $\varGamma \lhd G$ means that it is normal.
The identity element of $G$ is denoted by $e$ and the center by $Z(G)$.
Given $g \in G$, its centralizer is denoted by $C_G(g)$ and its conjugacy class
by $\prescript{G}{}{g}$. More generally, the orbit of $x$ for an action of $G$ on a set $X$ is denoted by $\prescript{G}{}{x}$.
When $G$ is the symmetric group $\mathbb S_n$
and $g$ is a $d$-cycle, we set $\Oc^n_d = \prescript{\mathbb S_n}{}{g}$.

The order of $G$, or an element $g \in G$, or a rack $X$, is denoted by
$\vert G \vert$, or $\vert g \vert$, or $\vert X \vert$, respectively. 
The group algebra of $G$ is denoted by $\fie G$ and its augmentation ideal by $J_{G}$; $G$ is identified with the usual basis of $\fie G$.
For the functor $G \mapsto \fie G$, we denote 
again by $\xi: \fie G \to \fie H$
the algebra map induced by a morphism of groups $\xi: G \to H$.

All Hopf algebras are assumed with bijective antipode.
The notation for Hopf algebras is standard: $\Delta$, $\varepsilon$, $\Ss$
denote the comultiplication, the counit, the antipode, respectively.
We use the Sweedler notation for the comultiplication and the coactions.

\section{Yetter-Drinfeld modules and racks}\label{sec:YD-racks}

\subsection{Yetter-Drinfeld modules}\label{subsection:Yetter-Drinfeld}
\emph{Reference: \cite[Sections 3.10 and 4.3]{MR4164719}.}

Let $H$ be a Hopf algebra. 
The category of Yetter-Drinfeld modules over $H$ is denoted by $\yd{H}$.
This is a braided tensor category: if $M, N \in \yd{H}$, then the braiding
is given by 
\begin{align*}
c_{M,N}(m \otimes n) &= m_{(-1)} \cdot n \otimes m_{(0)}, & m \in M, &\ n \in N. 
\end{align*}
Particularly, if $M\in \yd{H}$, then $(M, c_{M,M})$ is a braided vector space, i.e.,
\begin{align*}
(c_{M,M} \otimes \id) (\id \otimes c_{M,M})(c_{M,M} \otimes \id) 
&= (\id \otimes c_{M,M}) (c_{M,M} \otimes \id)(\id \otimes c_{M,M}). 
\end{align*}

We shall need the following facts.

\begin{pro} \label{pro:functors}
Let $J$ be a Hopf ideal of $H$ and let
$\varphi:H\to H/J$ be the canonical Hopf algebra map. 
The following statements hold:

\medbreak
\begin{enumerate}[leftmargin=*,label=\rm{(\alph*)}]
\item There is a functor $\varphi_*:\yd H\to \yd{H/J}$,
\begin{align*}
V\mapsto H/J\ot_H V\simeq V/ \ad J_\varGamma (V),
\end{align*}
where $\varphi_*(V)$ is an $H/J$-module and $H/J$-comodule via
\begin{align*}
x\cdot (y\ot v)&=xy\ot v, &
\delta(x\otimes v)&=x_{(1)}\varphi(v_{(-1)})\Ss(x_{(3)})\otimes (x_{(2)}\otimes v_{(0)})
\end{align*}
for all $x,y\in H/J$ and $v\in V$.

\medbreak
\item There is a functor  
$\varphi^*: \yd{H/J}\to \yd H$,
\[M\mapsto H\square _{H/J} M, \]
where the cotensor product $H\square _{H/J}M$ is a left $H$-comodule and a left $H$-module via
\begin{align*}
\delta(h\ot m)&=\Delta(h)\ot m,
&
 h (h'\ot m) &=
h_{(1)}h'\Ss(h_{(3)})\ot h_{(2)}m. 
\end{align*}

\medbreak
\item The functor $\varphi_*$ is left adjoint to $\varphi^*$. 
\end{enumerate}
\end{pro}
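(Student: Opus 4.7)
The plan is to verify all three parts by direct calculation, since these functors are variants of the standard induction/coinduction along a Hopf algebra morphism, adapted to the Yetter--Drinfeld setting. The key properties of the Hopf ideal $J$ --- namely $\Delta(J)\subseteq J\otimes H + H\otimes J$, $\varepsilon(J)=0$, and $\Ss(J)\subseteq J$ --- are the essential inputs ensuring well-definedness throughout.

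For (a), $H/J \otimes_H V$ is a quotient of $V$ as a vector space, and the left $H/J$-module structure is transparently inherited from the left factor. The only non-trivial point about the coaction is well-definedness on the tensor product over $H$: one must check that, for all $h\in H$, $x \in H/J$ and $v \in V$, the expressions $\delta(xh\otimes v)$ and $\delta(x\otimes hv)$ agree, which follows from the Yetter--Drinfeld compatibility of $V$ together with the anti-coalgebra property of $\Ss$. Coassociativity, counitality, and the YD compatibility for $\varphi_*(V)$ are then standard computations using the antipode axioms.

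For (b), I would verify that the proposed $H$-action preserves the cotensor subspace $H\square_{H/J}M\subseteq H\otimes M$, which rests on applying the antipode axiom to the iterated coproduct $h_{(1)}\otimes h_{(2)}\otimes h_{(3)}$ of $h$ and using the defining $H/J$-colinearity of elements of $H\square_{H/J}M$. The $H$-coaction is the restriction of $\Delta\otimes\id$, and the YD compatibility is a direct, if lengthy, computation.

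For (c), I would establish the adjunction via the natural bijection
\begin{align*}
\operatorname{Hom}_{\yd{H/J}}(\varphi_*V, M) \xrightarrow{\sim}\operatorname{Hom}_{\yd{H}}(V, \varphi^*M),\qquad f \mapsto \tilde f,
\end{align*}
where $\tilde f(v)=v_{(-1)}\otimes f(\overline 1\otimes v_{(0)})$. That $\tilde f(v)$ lies in $H\square_{H/J}M$ follows from the $H/J$-colinearity of $f$, while $H$-linearity and $H$-colinearity of $\tilde f$ follow from the $H/J$-linearity of $f$ combined with the antipode axiom. The inverse sends $g\colon V\to H\square_{H/J}M$ to the induced map $\overline x\otimes v \mapsto \overline x \cdot (\varepsilon\otimes\id_M)g(v)$, and the two triangle identities reduce to counitality. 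The main bookkeeping obstacle throughout will be the YD compatibility verification in (b), but it is essentially forced by the Hopf algebra axioms once well-definedness has been established.
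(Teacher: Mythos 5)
Your outline is correct, but note that the paper does not actually prove this proposition: its ``proof'' is a citation to Propositions 4.5.1, 4.5.2 and Corollary 4.5.3 of the reference \cite{MR4164719}. What you propose is the direct, self-contained verification that one would find in that reference, and all the key points are in place: well-definedness of the coaction on the balanced tensor product $H/J\ot_H V$ via the Yetter--Drinfeld compatibility of $V$ and the fact that $\varphi$ is a Hopf algebra map; stability of the cotensor product $H\square_{H/J}M$ under the adjoint-type action (here, in addition to the antipode axiom and the cotensor condition, one also uses the Yetter--Drinfeld compatibility of $M$ itself to rewrite $\delta_M(\varphi(h_{(2)})m)$ --- worth making explicit); and the adjunction via $f\mapsto\tilde f$ with $\tilde f(v)=v_{(-1)}\ot f(\overline 1\ot v_{(0)})$ and inverse $g\mapsto\bigl(\overline x\ot v\mapsto \overline x\cdot(\varepsilon\ot\id)g(v)\bigr)$, whose triangle identities indeed reduce to counitality and the $H$-colinearity of $g$. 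I checked these computations and they go through. The only thing either treatment leaves unaddressed is the stray $\varGamma$ in the displayed isomorphism $H/J\ot_H V\simeq V/\ad J_\varGamma(V)$ of part (a), which should read $V/\ad J(V)$; this is cosmetic. The trade-off is the obvious one: the paper's citation is shorter, while your verification makes the proposition independent of \cite{MR4164719} at the cost of routine but lengthy Sweedler bookkeeping.
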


\begin{proof}
See Propositions 4.5.1 and 4.5.2 and Corollary~4.5.3 of \cite{MR4164719}, respectively. 
\end{proof}

\medbreak
\subsection{Yetter-Drinfeld modules over groups}
Our primary interest is the case $H = \fie G$ where $G$ is a group;
then $V \in \yd{\fie G}$ means that $V = \bigoplus_{g \in G} V_g$ is a $G$-graded vector space and a $\fie G$-module such that
$g \cdot V_k = V_{gkg^{-1}}$ for all $g, k \in G$. The support of $V$ is
\begin{align*}
\supp V = \{g \in G: V_g \neq 0 \};
\end{align*}
it is a union of conjugacy classes of $G$. 

\medbreak
Assume that $G$ is finite. Let $g \in G$ and let $\chi: C_G(g) \to GL(V)$
be a representation of its centralizer. Then the induced module
\begin{align*} 
M(g, \chi) &= \Ind_{C_G(g)}^{G} V,
\end{align*}
with the natural grading from the identification 
of $G/C_G(g)$ with the conjugacy class of $g$, is an object in $\yd{\fie G}$. The representation $\chi $ is irreducible if and only if $M(g,\chi)$ is simple in $\yd{\fie G}$.
Furthermore, $M(g, \chi) \simeq M(g', \chi')$ if and only if $g$ and $g'$ are conjugated
and $\chi'$ arises from $\chi$ by conjugation.
This gives the classification of the simple objects in $\yd{\fie G}$.

\subsection{Change of group}
Throughout this subsection, $G$ denotes a finite group and $p$ is a prime number.
 We collect here results on finite groups 
 (well-known to experts) with applications to Yetter-Drinfeld modules that will be needed later. 

 \medbreak
 First we state a fact that follows from  Proposition \ref{pro:functors}.

\begin{lem}
\label{lem:easylowerstar} 
 Let 
$G$ be a group, $N \lhd G$, $\varphi:G\to G/N$ the canonical map, and 
$V \in \yd{\fie G}$. Assume that $\dim V < \infty$, that its support is a conjugacy class of $G$ and $N$ acts trivially on $V$. Let $g\in \supp V$. 
Then
\begin{align*} |\supp V|\cdot \dim V_g 
=\dim V
=\dim \varphi_*(V)
=|\supp \varphi_*(V)|\cdot \dim \varphi_*(V)_{gN}.
\end{align*}
Hence, if $\dim \varphi_*(V)_{gN}=1$ then $\supp V \simeq \supp \varphi_*(V)$ 
as racks. \qed \end{lem}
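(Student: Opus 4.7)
The plan is to extract the three dimension equalities in sequence and then read off the rack isomorphism from a dimension count. For the first equality, I would invoke the Yetter-Drinfeld compatibility $x \cdot V_h = V_{xhx^{-1}}$, which says that $G$ permutes the homogeneous components of $V$ transitively along $\supp V$, since the latter is a single conjugacy class. Hence $\dim V_h = \dim V_g$ for every $h \in \supp V$, and $\dim V = |\supp V| \cdot \dim V_g$.

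For the middle equality, I would apply Proposition~\ref{pro:functors}(a) to the canonical Hopf map $\varphi \colon \fie G \to \fie (G/N)$, whose kernel is the Hopf ideal $J = \fie G \cdot J_N$, with $J_N$ the augmentation ideal of $\fie N$. The proposition identifies $\varphi_*(V)$ with $V / J_N V$ as a vector space. Triviality of the $N$-action means $(n-1) \cdot v = 0$ for every $n \in N$ and $v \in V$, so $J_N V = 0$ and $\varphi_*(V) \cong V$ as vector spaces; in particular $\dim V = \dim \varphi_*(V)$.

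For the third equality, I would unwind the coaction formula of Proposition~\ref{pro:functors}(a): taking $x = 1$ and $v \in V_h$, the formula $\delta(1 \otimes v) = \varphi(h) \otimes (1 \otimes v)$ shows that the class of $v$ in $\varphi_*(V)$ is homogeneous of degree $hN$. Consequently $\supp \varphi_*(V) = \varphi(\supp V)$ is a single conjugacy class of $G/N$, and the same transitivity argument as in the first step yields $\dim \varphi_*(V) = |\supp \varphi_*(V)| \cdot \dim \varphi_*(V)_{gN}$. For the final assertion, if $\dim \varphi_*(V)_{gN} = 1$, then the decomposition $\varphi_*(V)_{gN} = \bigoplus_{h \in gN \cap \supp V} V_h$, all of whose summands have dimension $\dim V_g \geq 1$, forces $\dim V_g = 1$ and $|gN \cap \supp V| = 1$. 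Thus $\varphi$ restricts to a bijection $\supp V \to \supp \varphi_*(V)$; it commutes with conjugation because $\varphi$ is a group homomorphism, and is therefore an isomorphism of racks. No step is genuinely hard; the one point requiring care is verifying that the $G/N$-coaction on $\varphi_*(V)$ obtained from Proposition~\ref{pro:functors}(a) matches the naive grading $v \in V_h \mapsto hN$, which is a direct computation from the coaction formula.
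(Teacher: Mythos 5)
Your proof is correct and follows exactly the route the paper intends: the lemma is stated without proof as a direct consequence of Proposition~\ref{pro:functors}, and your argument (transitivity of the $G$-action on the homogeneous components, $\varphi_*(V)\simeq V/J_NV=V$ when $N$ acts trivially, and reading the $G/N$-grading off the coaction formula) is precisely the verification the authors leave to the reader. The final dimension count forcing the fibers of $\varphi|_{\supp V}$ to be singletons, hence a bijective rack morphism, is also the expected conclusion.
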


\medbreak
The next result expresses the well-known fact that the group algebra
functor from groups to Hopf algebras preserves exact sequences.

\begin{lem}
\label{lem:J}
Let  $\varGamma \lhd G$.
Then  $J = \fie G \cdot J_{\varGamma}$ is a Hopf ideal of $\fie G$; indeed it is the  kernel of the canonical Hopf algebra map
\begin{align*} \psi: \fie G\to \fie (G/\varGamma) \end{align*}
induced by the surjection $G \to G/\varGamma$. Hence
$\fie G/J \simeq \fie (G/\varGamma)$ as Hopf algebras.
Furthermore, if $ \car \fie =p>0$ and $\varGamma$ is a $p$-group, then 
$J$ is nilpotent. 
\end{lem}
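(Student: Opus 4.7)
The plan is to verify the three assertions separately, with the first two being essentially formal and the third reducing to the classical fact that the augmentation ideal of a $p$-group in characteristic $p$ is nilpotent.

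For the identification $J = \ker\psi$, I would first observe that $J_\varGamma$ is spanned by the elements $h-e$ for $h\in \varGamma$, so $\fie G\cdot J_\varGamma$ is the $\fie$-span of $\{g(h-e): g\in G,\ h\in \varGamma\}$. On the other hand, $\psi(g) = g\varGamma$, so $\ker\psi$ is spanned by differences $g-g'$ with $g^{-1}g'\in \varGamma$; writing $g' = gh$ with $h\in \varGamma$ gives $g-g' = -g(h-e)$, which shows $\ker\psi = \fie G\cdot J_\varGamma = J$. Since $\ker\psi$ is automatically a Hopf ideal, so is $J$, and the map $\fie G/J\to \fie(G/\varGamma)$ induced by $\psi$ is a bijective Hopf algebra map.

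For the nilpotency claim under $\car\fie = p>0$ and $\varGamma$ a $p$-group, the key observation is that normality of $\varGamma$ in $G$ makes $J_\varGamma$ stable under conjugation by $G$, so $\fie G\cdot J_\varGamma = J_\varGamma\cdot \fie G$, i.e., $J$ is two-sided. I would then prove by induction on $n$ that
\begin{align*}
J^n = J_\varGamma^n\cdot \fie G,
\end{align*}
using the identity $\fie G\cdot J_\varGamma = J_\varGamma\cdot \fie G$ to move scalars past augmentation ideal factors in the product. The classical result (e.g.\ from Jennings theory, or directly from the fact that $\fie \varGamma$ is local with maximal ideal $J_\varGamma$ when $\varGamma$ is a finite $p$-group in characteristic $p$) gives $J_\varGamma^n = 0$ for $n$ sufficiently large, and then $J^n = 0$ follows.

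The only point requiring any care is the bookkeeping in the identity $J^n = J_\varGamma^n\cdot \fie G$; everything else is either a direct computation or quotation of standard material. I do not anticipate a real obstacle.
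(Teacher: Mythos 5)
Your proposal is correct and follows essentially the same route as the paper: the paper likewise deduces the coideal/ideal properties from normality of $\varGamma$ (via $(\gamma-e)g=g(g^{-1}\gamma g-e)$) and obtains nilpotency from $J^k=\fie G\,J_\varGamma^k$ together with the classical nilpotency of $J_\varGamma$ for a finite $p$-group in characteristic $p$. Your explicit span computation identifying $J$ with $\ker\psi$, and your writing of $J^n$ as $J_\varGamma^n\cdot\fie G$ rather than $\fie G\cdot J_\varGamma^n$, are only cosmetic variations.
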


\noindent \emph{Proof.} 
Since $J_\varGamma$ is a Hopf ideal of $\fie \varGamma$, 
$J$ is a coideal and a left ideal of $\fie G$. 
To see that $J$ is an ideal, observe, using that $\varGamma $ is a normal subgroup of $G$, that
\begin{align*}
(\gamma-1)g &= gg^{-1}(\gamma-1)g=g(g^{-1}\gamma g-1)\in J&
\text{for all } g &\in G.
\end{align*}

It is well-known that $J_\varGamma$ is nilpotent if 
$\car \fie = p>0$ and $\varGamma $ is a $p$-group, see 
\cite[Lemma 3.1.6]{MR0470211}. Hence $J$ is nilpotent, because
\begin{align*} 
J^k &= (\fie G\, J_\varGamma)^k=\fie G \, J_\varGamma^k & \text{for all } k &\in \ndN. \hspace{50pt}\qed \end{align*}

In the next proposition, given a group $\varGamma$ and a $\varGamma$-module $V$,
we use the notation 
\begin{align*}
H_0(\varGamma,V)\simeq  V/(J_{\varGamma} \cdot V),
\end{align*}
where $J\cdot V=\lspan_{\fie}\{h\cdot v\mid h\in J,v\in V\}$, 
for the $0$-th homology group. 
Also, given an action of $\varGamma$ on
a set $X$ we write
\begin{align*} X/\varGamma =\{\prescript{\varGamma}{}{x}
 \mid x\in X \} \end{align*}
for the set of all $\varGamma $-orbits  of $X$ and $[X/\varGamma]$ for a set of representatives of $X/\varGamma$ in $X$, so that
$X = \coprod_{x \in [X/\varGamma]}\prescript{\varGamma}{}{x}$.

\begin{pro} 
\label{pro:psistar}
Let $\varGamma \lhd G$, $\psi\colon G\to G/\varGamma$ be the canonical map,
and $V  \in \yd{\fie G}$; set $X = \supp V$. Then 
we have an isomorphism of $\fie (G/\varGamma)$-comodules
\begin{align*}
\psi_*(V)=H_0(\varGamma,V)\simeq 
\bigoplus_{x\in [X/\varGamma]} H_0(C_\varGamma(x),V_x).
\end{align*}
Moreover, if $J = \fie G\, J_{\varGamma}$ is nilpotent, then $H_0(C_\varGamma(x),V_x)$ is non-zero 
for all $x\in X$.
\end{pro}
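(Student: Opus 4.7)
The plan is to split the isomorphism into two pieces: first identify $\psi_*(V)$ with the group-theoretic coinvariants $H_0(\varGamma,V)$, and then decompose the latter orbit-by-orbit via a Shapiro-type identity.

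For the first piece, I would apply Proposition~\ref{pro:functors}(a) together with Lemma~\ref{lem:J} to write $\psi_*(V) = \fie(G/\varGamma)\ot_{\fie G}V = V/(J\cdot V)$, and then use the normality of $\varGamma$ to reduce $J\cdot V$ to $J_\varGamma\cdot V$: indeed $g(\gamma-1)\cdot v = (g\gamma g^{-1}-1)\cdot(g\cdot v)\in J_\varGamma\cdot V$, and the reverse inclusion is obvious since $\fie G\cdot V=V$. Hence $\psi_*(V)\simeq H_0(\varGamma,V)$.

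Next, the $G$-grading breaks $V$ into $\varGamma$-stable pieces indexed by the $\varGamma$-orbits of $X$: $V = \bigoplus_{x\in[X/\varGamma]} V_{\prescript{\varGamma}{}{x}}$ with $V_{\prescript{\varGamma}{}{x}} = \bigoplus_{y\in \prescript{\varGamma}{}{x}} V_y$. The $\varGamma$-stabilizer of $x$ being $C_\varGamma(x)$, choosing coset representatives $\{\gamma_i\}$ for $\varGamma/C_\varGamma(x)$ and observing that $V_{\gamma_i x\gamma_i^{-1}} = \gamma_i\cdot V_x$ yields an isomorphism of $\fie\varGamma$-modules $V_{\prescript{\varGamma}{}{x}}\simeq \Ind_{C_\varGamma(x)}^\varGamma V_x$. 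Applying $H_0(\varGamma,-)$, which commutes with direct sums and satisfies
\begin{align*}
H_0\bigl(\varGamma,\Ind_{C_\varGamma(x)}^\varGamma V_x\bigr)
= \fie\ot_{\fie\varGamma}\fie\varGamma\ot_{\fie C_\varGamma(x)} V_x
= H_0(C_\varGamma(x),V_x),
\end{align*}
then produces the asserted decomposition. Compatibility with the $\fie(G/\varGamma)$-coaction is automatic: every $G$-homogeneous vector in $V_{\prescript{\varGamma}{}{x}}$ has degree in $x\varGamma$, which maps to the single coset $\psi(x)\in G/\varGamma$, so the summand $H_0(C_\varGamma(x),V_x)$ sits in cograding $\psi(x)$.

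For the final assertion, I would run a Nakayama-style argument. If $J^N=0$, then the identity $J^k = \fie G\cdot J_\varGamma^k$ used in Lemma~\ref{lem:J} gives $J_\varGamma^N\cdot V=0$; since $J_{C_\varGamma(x)}\subseteq J_\varGamma$ as subspaces of $\fie\varGamma$, also $J_{C_\varGamma(x)}^N\cdot V_x=0$. Now for $x\in X$ we have $V_x\ne 0$; if $J_{C_\varGamma(x)}\cdot V_x=V_x$, iterating would yield $V_x=J_{C_\varGamma(x)}^N\cdot V_x=0$, a contradiction, so $H_0(C_\varGamma(x),V_x)\ne 0$. The bulk of the proof is bookkeeping; the one genuinely subtle point is verifying that the Shapiro-type identification respects the $\fie(G/\varGamma)$-coaction, which the homogeneity remark above disposes of.
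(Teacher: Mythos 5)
Your proposal is correct and follows essentially the same route as the paper: the same orbit decomposition of $V$ over $[X/\varGamma]$, the same Shapiro-type collapse $H_0(\varGamma,\Ind_{C_\varGamma(x)}^\varGamma V_x)\simeq H_0(C_\varGamma(x),V_x)$ (which the paper performs directly on $\bigoplus_{\gamma}V_{\gamma x\gamma^{-1}}$), the same observation that all degrees in a $\varGamma$-orbit land in the single coset $\psi(x)$, and the same Nakayama argument for the nilpotent case. The only addition is your explicit verification that $\psi_*(V)=H_0(\varGamma,V)$, which the paper treats as already contained in Proposition~\ref{pro:functors} and Lemma~\ref{lem:J}.
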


\begin{proof}
If $x\in \supp V$, then 
$H_0(C_\varGamma(x),V_x)$ is a 
$\fie (G/\varGamma)$-comodule with coaction 
\begin{align*} 
\delta (v) &= \psi(x) \ot v & \text{ for all } v &\in H_0(C_\varGamma(x),V_x).
\end{align*} 
Let $\varGamma$ act on $X$ by conjugation.
The decomposition
\begin{align*} V=\bigoplus_{x\in [X/\varGamma ]} \ \bigoplus_{\gamma \in [\varGamma/C_{\varGamma}(x)]} V_{\gamma x \gamma^{-1}}
\end{align*}
is compatible with the coaction of $G/\varGamma$, and the $\fie (G/\varGamma)$-coaction is independent of the choice of representatives.
Then
\begin{align*} 
H_0(\varGamma,V)\simeq 
\bigoplus_{x\in [X/\varGamma ]} H_0\bigg(\varGamma ,\bigoplus_{\gamma \in [\varGamma/C_{\varGamma}(x)]} V_{\gamma x \gamma^{-1}}\bigg)
\simeq 
\bigoplus_{x\in [X/\varGamma]} H_0(C_\varGamma(x),V_x)
\end{align*}
as $\fie (G/\varGamma)$-comodules.

Finally, if $J$ is nilpotent, then for all $x\in X$, $J\cap \fie C_\varGamma(x)$ is a nilpotent ideal of $\fie C_\varGamma(x)$, and hence $H_0(C_\varGamma(x),V_x)$ is non-zero by Nakayama's lemma.
\end{proof}

We record a well-known lemma for further use.

\begin{lem}[{\cite[Lemma 3.11]{MR2426855}}]
\label{lem:minimal_normal}
If $M \lhd G$ is a minimal normal subgroup which is solvable, 
then $M$ is elementary abelian. \qed
\end{lem}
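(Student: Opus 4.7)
The plan is to exploit minimality of $M$ via the elementary principle that any characteristic subgroup of $M$ is normal in $G$ (because conjugation by elements of $G$ restricts to automorphisms of $M$), and hence, by minimality, must equal $\{e\}$ or $M$. Iterating this observation with well-chosen characteristic subgroups will successively force $M$ to be abelian, then a $p$-group, and finally of exponent $p$.

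First I would reduce to the abelian case by considering the derived subgroup $M'$, which is characteristic in $M$. By the observation above, $M'$ equals $\{e\}$ or $M$; the case $M' = M$ is ruled out by solvability of $M$ (its derived series would never terminate), so $M' = \{e\}$ and $M$ is abelian.

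Next, assuming $M$ abelian, I would fix a prime $p$ dividing $\vert M \vert$ and note that the Sylow $p$-subgroup $M_p$ is characteristic in $M$, being its unique Sylow $p$-subgroup. The same minimality argument then gives $M_p = M$, so $M$ is an abelian $p$-group. To upgrade to exponent $p$, I would consider $M[p] = \{x \in M : x^p = e\}$, which is again characteristic in $M$ and is nontrivial because a finite nontrivial $p$-group contains elements of order $p$. A final appeal to minimality yields $M[p] = M$, whence $M$ is elementary abelian.

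There is no genuine obstacle here; the argument is a textbook chain of applications of the same minimality-plus-characteristic principle. The only point meriting attention is the justification that each of the chosen subgroups ($M'$, $M_p$, and $M[p]$) is characteristic in $M$, which is immediate from their group-theoretic definitions being preserved by every automorphism of $M$.
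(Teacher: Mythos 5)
Your proof is correct and complete; the paper itself offers no argument for this lemma, only a citation to the literature, and your chain of applications of the principle ``characteristic in $M$ plus $M \lhd G$ implies normal in $G$, hence trivial or all of $M$'' (applied to the derived subgroup, then the Sylow $p$-subgroup, then the $p$-torsion subgroup) is exactly the standard proof found in the cited reference. The only points worth making explicit are that a minimal normal subgroup is by convention nontrivial (needed to rule out $M'=M$ and to guarantee $M_p$ and $M[p]$ are nontrivial) and that $G$ is finite in this section, both of which your argument implicitly and correctly uses.
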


\subsubsection*{The hypercenter}
Let $Z_0(G) = \{e\}$, $Z_{n+1}(G) = \{x\in G\mid [x,G]\subseteq Z_{n}(G)\}$ be the upper central series of $G$, so that 
\begin{align*}
Z_0(G) &= \{e\} \leq Z_1(G) = Z(G) \leq Z_2(G) \leq \dots 
\end{align*}
The \emph{hypercenter of} $G$ is the limit $Z^*(G) =\cup_{n \in \ndN} Z_{n}(G)$; this is a  nilpotent
characteristic subgroup of $G$. Furthermore:

\medbreak
\begin{itemize}[leftmargin=*]\renewcommand{\labelitemi}{$\circ$}
\item $G$ is nilpotent if and only if $G = Z^*(G)$. 

\medbreak
\item $G/Z^*(G)$ is centerless. Let $\pi: G \to G/Z^*(G)$
be the canonical surjection. 

\end{itemize}

\subsubsection*{The  Fitting subgroup and  the $p$-core} \label{subsec:Fitting}
Recall that 

\begin{itemize}[leftmargin=*]\renewcommand{\labelitemi}{$\circ$}

\item the Fitting subgroup $F(G)$
is the unique largest normal nilpotent subgroup of $G$; 

\medbreak
\item the $p$-core $O_p(G)$ of $G$ is the intersection of 
the Sylow $p$-subgroups of $G$; it is the unique largest normal $p$-subgroup of $G$,
and the unique $p$-Sylow subgroup of the Fitting subgroup $F(G)$. 
\end{itemize} 
 
\begin{lem} 
\label{lem:hypercenter} The following statements hold: 
\begin{enumerate}[leftmargin=*,,label=\rm{(\alph*)}]
\item\label{item:hypercenter1} 
Let $M \lhd G$, $N \lhd G$ with $N \leq M$. If $N\leq Z^*(G)$ and $M/N$ is nilpotent, then $M$ is nilpotent.

\medbreak
\item\label{item:hypercenter2} 
Let $N \lhd G$ such that $N \leq Z^*(G)$. Then
$O_p(G/N)=O_p(G)N/N$.

\medbreak
\item\label{item:hypercenter3} 
$O_p(G)$ is contained in $Z^*(G)$ if and only if $O_p(G/Z^*(G))$ is trivial.
\end{enumerate}
\end{lem}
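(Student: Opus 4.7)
My strategy is to prove (a) directly from the definition of the upper central series, deduce (b) from (a) together with the structure of finite nilpotent groups, and then obtain (c) as a one-line specialization of (b).

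For part (a), the key preliminary observation I would establish by induction on $n$ is that
\begin{align*}
Z_n(G)\cap M\leq Z_n(M)\qquad\text{for every }n\ge 0.
\end{align*}
The only ingredient needed is the containment $[x,M]\leq [x,G]\cap M$ whenever $x\in M$: if $x\in Z_n(G)\cap M$, then $[x,M]$ lies in $Z_{n-1}(G)\cap M$, which by the inductive hypothesis is contained in $Z_{n-1}(M)$, whence $x\in Z_n(M)$. Applied to $N\leq Z^*(G)$, this upgrades the hypothesis to $N\leq Z^*(M)$, i.e.\ $N\leq Z_n(M)$ for some $n$. Combined with $M/N$ being nilpotent of some class $c$, a second standard induction --- climbing the upper central series of $M/N$ and pulling it back to $M$ via the projection $M\to M/N$ --- shows that $M$ is nilpotent (of class at most $n+c$).

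For part (b), the inclusion $O_p(G)N/N\leq O_p(G/N)$ is automatic, since $O_p(G)N/N$ is a normal $p$-subgroup of $G/N$. For the reverse, I would take $H$ to be the preimage in $G$ of $O_p(G/N)$; then $H\lhd G$ and $H/N$ is a $p$-group, hence nilpotent. Part (a) therefore forces $H$ itself to be nilpotent. Since $G$ is finite, $H$ decomposes as a direct product $H_p\times H_{p'}$ of its Sylow $p$-subgroup and its Hall $p'$-subgroup; both factors are characteristic in $H$ and hence normal in $G$. Thus $H_p\leq O_p(G)$. Meanwhile $H_{p'}N/N$ is simultaneously a subgroup of the $p$-group $H/N$ and a quotient of the $p'$-group $H_{p'}$, hence trivial; so $H_{p'}\leq N$. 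Combining, $H\leq O_p(G)\cdot N$, as required.

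Part (c) is then immediate: applying (b) with $N=Z^*(G)$ gives $O_p(G/Z^*(G))=O_p(G)Z^*(G)/Z^*(G)$, which is trivial precisely when $O_p(G)\leq Z^*(G)$. The main substance is concentrated in the preliminary inclusion $Z_n(G)\cap M\leq Z_n(M)$ of part (a) --- the statement that hypercentrality descends to normal subgroups --- and this is the only step that requires a genuine proof rather than a structural observation; the rest proceeds essentially formally, so I do not anticipate any serious obstacle.
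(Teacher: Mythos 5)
Your proof is correct and follows essentially the same route as the paper's: parts (b) and (c) are virtually identical (take the preimage $M$ of $O_p(G/N)$, note it is nilpotent by (a), split it into its Sylow direct factors, and observe that the $p'$-part is absorbed into $N$). For (a) the paper argues slightly more directly --- bounding the lower central series of $M$ against the upper central series of $G$ via $[M,Z_i]\leq [G,Z_i]\leq Z_{i-1}$, so that $M_r\leq N\leq Z_j$ forces $M_{r+j}=\{e\}$ --- whereas you run two upper-central-series inductions (first $Z_n(G)\cap M\leq Z_n(M)$, then pulling back the series of $M/N$); both arguments are standard and equivalent.
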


\begin{proof}
\ref{item:hypercenter1} 
Set $Z_j = Z_j(G)$. 
Since $N\leq Z^*(G)$, there exists $j\ge 1$ such that $N\leq Z_j$.
Let $(M_k)_{k\ge 1}$ be the lower central series of $M$, that is, 
\begin{align*}
M_1 &=M, & M_{k+1} &= [M,M_k],& k &\ge 1. 
\end{align*}
Since $M/N$ is nilpotent, there exists $r\ge 1$ such that $M_r\leq N$. 
Since
\[ [M,Z_i]\leq [G,Z_i]\leq Z_{i-1}\quad \text{for all $i\ge 1$} \]
and since $N\leq Z_j$, 
it follows that $M_{r+j}=\{e\}$.

\medbreak
\ref{item:hypercenter2} 
 Let $M$ be a normal $p$-subgroup of $G$. Then $MN/N$ is a  
 normal $p$-subgroup of $G/N$. Hence $O_p(G)N/N\leq O_p(G/N)$. (Here one does not need that $N\leq Z^*(G)$.)

\medbreak
Let now $M\leq G$ such that $N \leq M$, and assume that $M/N$ is a normal $p$-subgroup of $G/N$. Then $M$ is normal in $G$,
$|M|=p^n|N|$ for some $n\ge 0$,
and $M$ is nilpotent by \ref{item:hypercenter1}.
In particular, $M=\prod_{\text{$q$ prime}}O_q(M)$ and $O_p(M)\leq O_p(G)$. Moreover, $O_q(M)\leq N$ for all $q\ne p$. Hence $M/N\leq O_p(G)N/N$.

\medbreak
\ref{item:hypercenter3} follows from \ref{item:hypercenter2}  with $N=Z^*(G)$.
\end{proof}

\begin{lem}
\label{lem:Op(G)}
Let $N \lhd G$. Assume that
$N \leq Z^*(G)\cap O_p(G)$.  Then 
\begin{align*}
Z^*(G/N)&=Z^*(G)/N,\\
O_p(G/N)&=O_p(G)/N,\\
Z^*(G/N)\cap O_p(G/N)&=(Z^*(G)\cap O_p(G))/N.
\end{align*}
\end{lem}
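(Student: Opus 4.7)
The plan is to establish the three claimed equalities in order: only the first, $Z^*(G/N)=Z^*(G)/N$, requires real work. The second will reduce immediately to Lemma~\ref{lem:hypercenter}\ref{item:hypercenter2} combined with the extra hypothesis $N\leq O_p(G)$, and the third will follow formally from the first two by the correspondence theorem.

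For the hypercenter equality, let $\pi\colon G\to G/N$ denote the canonical projection and fix $j\geq 0$ with $N\leq Z_j(G)$, which exists because $G$ is finite and $N\leq Z^*(G)$. The inclusion $\pi(Z^*(G))\leq Z^*(G/N)$ is standard: a routine induction on $n$, using $[\pi(x),\pi(G)]=\pi([x,G])$, gives $\pi(Z_n(G))\leq Z_n(G/N)$ for every $n$. For the reverse inclusion I would prove by induction on $k$ that
\begin{align*}
\pi^{-1}(Z_k(G/N))\leq Z_{k+j}(G).
\end{align*}
The base case $k=0$ is $N\leq Z_j(G)$. For the inductive step, if $\pi(x)\in Z_k(G/N)$ then $\pi([x,g])\in Z_{k-1}(G/N)$ for every $g\in G$, so by induction $[x,g]\in Z_{k-1+j}(G)$; letting $g$ vary then gives $x\in Z_{k+j}(G)$. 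Taking the union over $k$ yields $\pi^{-1}(Z^*(G/N))\leq Z^*(G)$, equivalently $Z^*(G/N)\leq Z^*(G)/N$.

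With the first equality in hand, the second follows from Lemma~\ref{lem:hypercenter}\ref{item:hypercenter2}, which gives $O_p(G/N)=O_p(G)N/N$, together with $N\leq O_p(G)$ (so that $O_p(G)N=O_p(G)$). For the third, since $N$ lies in both $Z^*(G)$ and $O_p(G)$, the elementary identity $(A/N)\cap(B/N)=(A\cap B)/N$ for subgroups $A,B\supseteq N$ combines with the first two equalities to produce the result. The only non-formal step is the shift-by-$j$ induction for the hypercenter, and even that is mild: in a finite group the upper central series terminates, so $N\leq Z^*(G)$ places $N$ at a definite finite level $Z_j(G)$, and pulling back the upper central series of $G/N$ through $\pi$ simply shifts indices by $j$.
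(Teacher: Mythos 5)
Your proof is correct and follows essentially the same route as the paper: the two containments $Z_j(G)/N\leq Z_j(G/N)$ and $Z_k(G/N)\leq Z_{k+j}(G)/N$ (with $j$ chosen so that $N\leq Z_j(G)$) for the first equality, Lemma~\ref{lem:hypercenter}\ref{item:hypercenter2} together with $N\leq O_p(G)$ for the second, and the correspondence theorem for the third. You merely spell out the inductions that the paper leaves implicit.
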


\begin{proof} By definition, $Z_j(G)/N \leq Z_j(G/N)$
for any $j$; if $N \leq Z_r(G)$ for some $r$,
then $Z_j(G/N) \leq Z_{j + r}(G)/N$ and the first equality follows. The second is 
 Lemma \ref{lem:hypercenter} \ref{item:hypercenter2}.
The third is an elementary application of 
the correspondence theorem. 
\end{proof}

\begin{lem}
\label{lem:commutator}
Let $X$ be a conjugacy class
of $G$ generating $G$. Then $[G,G]$ acts transitively on $X$. 
\end{lem}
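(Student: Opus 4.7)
The plan is to reduce the lemma to showing $G = [G,G] \cdot C_G(x)$ for a fixed $x \in X$. Granting this equality, write an arbitrary $g \in G$ in the form $g = hc$ with $h \in [G,G]$ and $c \in C_G(x)$; then
\begin{align*}
g x g^{-1} = h c x c^{-1} h^{-1} = h x h^{-1},
\end{align*}
so every element in the $G$-orbit of $x$ lies in the $[G,G]$-orbit of $x$. Since the $G$-orbit of $x$ equals $X$, this shows that $[G,G]$ acts transitively on $X$ by conjugation.

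To establish the equality $G = [G,G] \cdot C_G(x)$, I would pass to the abelianization $\pi \colon G \twoheadrightarrow G^{\mathrm{ab}} := G/[G,G]$. All elements of the single conjugacy class $X$ map to the same element $\bar x = \pi(x)$ in the abelian group $G^{\mathrm{ab}}$, simply because conjugation becomes trivial in the quotient. The hypothesis that $X$ generates $G$ then forces $\bar x$ to generate $G^{\mathrm{ab}}$, so in particular the cyclic subgroup $\langle x \rangle \leq C_G(x)$ surjects onto $G^{\mathrm{ab}}$. Hence $\pi(C_G(x)) = G^{\mathrm{ab}}$, which is equivalent to $C_G(x) \cdot [G,G] = G$. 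Since $[G,G]$ is normal in $G$, this product subgroup coincides with $[G,G] \cdot C_G(x)$, as required.

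There is no deep obstacle here; the proof is a short application of the abelianization. The one place to be careful is the bookkeeping in the first paragraph: the factorization of $g$ must be chosen with $c \in C_G(x)$ on the \emph{right} of $h \in [G,G]$, so that conjugation by $c$ (which fixes $x$) collapses and leaves a pure $[G,G]$-conjugate of $x$. With that ordering, the two-line computation goes through and the lemma is established.
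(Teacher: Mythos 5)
Your proof is correct, and it takes a genuinely different route from the paper's. You reduce the lemma to the factorization $G=[G,G]\cdot C_G(x)$, which you obtain by observing that the abelianization $G^{\mathrm{ab}}$ is generated by the common image $\bar x$ of the class $X$, so that already $\langle x\rangle$ surjects onto $G^{\mathrm{ab}}$; transitivity of $[G,G]$ then follows from the standard orbit--stabilizer bookkeeping $gxg^{-1}=hxh^{-1}$ for $g=hc$. The paper instead works inside $[G,G]$: it notes that $[G,G]$ is generated by the elements $xy^{-1}$ with $x,y\in X$ (since $[g,y]=(gyg^{-1})y^{-1}$ and $X$ generates $G$), and then shows that any non-empty $[G,G]$-invariant subset $Y\subseteq X$ is in fact invariant under conjugation by all of $X$, hence equals $X$. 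Your argument is arguably cleaner and yields the slightly sharper statement $G=[G,G]\,\langle x\rangle$ for any $x\in X$ (so $G^{\mathrm{ab}}$ is cyclic); the paper's argument has the side benefit of exhibiting explicit generators $xy^{-1}$ of $[G,G]$ supported on $X$, in the spirit of the rack-theoretic constructions (e.g.\ Lemma~\ref{lem:semidirect}) used elsewhere. Both proofs are elementary and neither uses finiteness of $G$.
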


\begin{proof}
The group $[G,G]$ is generated by the elements 
$[g,y]=(gyg^{-1})y^{-1}$ with $g\in G$ and $y\in X$.
Since $G$ acts transitively on $X$, we conclude that
$[G,G]$ is generated by $\{xy^{-1}\mid x,y\in X\}$.
Let $Y$ be a non-empty 
subset of $X$ invariant under conjugation by $[G,G]$.
Then for all $x\in X$, $y\in Y$ one has
\begin{align*} xyx^{-1}=(xy^{-1})y(xy^{-1})^{-1}\in Y. \end{align*}
Hence $Y=X$, since $G$ is generated by $X$ and acts transitively on $X$.
\end{proof}

\begin{lem}
\label{lem:quotient_YD}
Assume that $G$ is non-abelian. Let
$V \in \yd{\fie G}$ be such that 
$\supp V$ is a conjugacy class of $G$ generating $G$, and let 
$N$ be the kernel of the representation of $G$ on $V$. 
Then $G/N$ is non-abelian.
\end{lem}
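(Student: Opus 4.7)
The plan is to argue by contradiction. Assume that $G/N$ is abelian. Then $[G,G]\subseteq N$, so every commutator acts trivially on $V$. I will exploit this together with the grading of $V$ to force the support $X=\supp V$ to be a singleton, which will contradict the non-abelianness of $G$.

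First I would pick any $g\in X$ and any $h\in [G,G]$. Since $h\in N$, $h$ acts as the identity on $V$, so in particular $h\cdot v=v$ for every $v\in V_g$. On the other hand, by the Yetter-Drinfeld compatibility, $h\cdot V_g\subseteq V_{hgh^{-1}}$. Because the grading decomposition is a direct sum, the equality $V_g=h\cdot V_g\subseteq V_{hgh^{-1}}$ combined with $V_g\ne 0$ forces $hgh^{-1}=g$. Hence every element of $[G,G]$ commutes with every element of $X$; equivalently, $[G,G]$ acts trivially on $X$ by conjugation.

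Now I would invoke Lemma~\ref{lem:commutator}, which asserts that $[G,G]$ acts transitively on $X$ (using that $X$ is a conjugacy class generating $G$). A transitive action that is simultaneously trivial can only occur on a one-element set, so $|X|=1$. Writing $X=\{g\}$, the hypothesis that $X$ generates $G$ yields $G=\langle g\rangle$, which is cyclic and in particular abelian, contradicting the assumption that $G$ is non-abelian. Therefore $G/N$ must be non-abelian.

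The argument is short and the only subtle step is the passage from ``$[G,G]\subseteq N$'' to ``$[G,G]$ centralizes $X$'', which relies crucially on the interplay between the $G$-action and the $G$-grading on a Yetter-Drinfeld module; no further obstacle should arise.
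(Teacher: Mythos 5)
Your proof is correct and follows essentially the same route as the paper: reduce to $[G,G]\subseteq N$, use the trivial action of $N$ on $V$ together with the $G$-grading to see that $[G,G]$ fixes $\supp V$ pointwise under conjugation, and then invoke Lemma~\ref{lem:commutator} to force $|\supp V|=1$ and a contradiction. The only difference is that you spell out in more detail the (correct) step that trivial action plus the Yetter--Drinfeld grading forces $hgh^{-1}=g$, which the paper states more briefly.
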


\begin{proof}
Let $X=\supp V$. Since $N$ acts trivially on $V$, the conjugation action of $N$ on $X$ is trivial.
 If $G/N$ is abelian, then $[G,G]\subseteq N$. In particular,
 $[G,G]$ acts trivially on $X$ by conjugation. 
 By Lemma~\ref{lem:commutator}, $[G,G]$ acts transitively on $X$. 
 Hence $|X|=1$. Since $X$ generates $G$, it follows that $G$ is abelian, a contradiction. 
\end{proof}

\begin{lem}
\label{lem:intersection}
Assume that $G$ is non-abelian and that $\car \fie = p$. Let $V  \in \yd{\fie G}$ be simple. Then 
$Z^*(G)\cap O_p(G)$ acts trivially on $V$. 
\end{lem}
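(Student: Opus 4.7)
Set $N=Z^*(G)\cap O_p(G)$. I would prove that $N$ acts trivially on $V$ by induction on $|N|$, with the base case $N=\{e\}$ trivial.

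For the inductive step, first choose a minimal normal subgroup $M\le N$ of $G$. I claim $M\le Z(G)$: taking $k\ge 1$ minimal with $M\le Z_k(G)$, if $k\ge 2$ then $M\cap Z_{k-1}(G)\lhd G$ is proper in $M$, hence trivial by minimality of $M$, which forces $[M,G]\le M\cap Z_{k-1}(G)=\{e\}$ and so $M\le Z(G)$, contradicting the minimality of $k$. Combined with Lemma~\ref{lem:minimal_normal}, $M$ is elementary abelian and central. Writing $V\simeq M(g,\chi)$ with $\chi$ a simple $\fie C_G(g)$-module, each $m\in M\le Z(G)$ acts on $V_g$ by a scalar (Schur's lemma) of $p$-power order, hence equal to $1$ in characteristic $p$. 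Since $V=\fie G\cdot V_g$ and $M$ is central, $M$ acts trivially on all of $V$.

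Next, pass to $\bar G=G/M$ via $\psi\colon G\to\bar G$: by Proposition~\ref{pro:psistar}, $\psi_*(V)$ equals $V$ as a vector space (because $M$ acts trivially), and Lemma~\ref{lem:Op(G)} identifies $Z^*(\bar G)\cap O_p(\bar G)=N/M$ of strictly smaller order. One verifies that $\psi_*(V)$ is a simple Yetter-Drinfeld module with support generating $\bar G$, and that $\bar G$ is non-abelian — if not, then $[G,G]\le M$ would act trivially on $V$, but Lemma~\ref{lem:commutator} forces $[G,G]$ to act transitively on $\supp V$, giving $|\supp V|=1$ and hence $G$ cyclic, a contradiction. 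The inductive hypothesis then gives $N/M$ acting trivially on $\psi_*(V)$; since the $N$-action on $V$ factors through $N/M$ (as $M$ already acts trivially), $N$ acts trivially on $V$. The main obstacle is exactly this descent step, transferring simplicity, support-generation, and non-abelianness from $(G,V)$ to $(\bar G,\psi_*(V))$ — which is where Lemmas~\ref{lem:commutator} and~\ref{lem:Op(G)} do the essential work.
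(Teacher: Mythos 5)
Your strategy is genuinely different from the paper's: you induct on $|Z^*(G)\cap O_p(G)|$ by splitting off a minimal normal subgroup $M$ of $G$ and descending to $G/M$, whereas the paper keeps the group fixed, puts $N=Z^*(G)\cap O_p(G)\cap\ker\rho$, and argues up the upper central series of $G/N$. The first half of your argument is correct: a minimal normal subgroup of $G$ inside the hypercenter is central, its elements have $p$-power order because they lie in $O_p(G)$, and Schur's lemma applied to the simple $\fie C_G(g)$-module $V_g$ (together with $V=\fie G\cdot V_g$) shows they act trivially on $V$.

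The descent step, however, has two genuine gaps. First, Lemma~\ref{lem:commutator}, the phrase ``support generating $\bar G$'', and the conclusion ``$G$ cyclic'' all presuppose that $\supp V$ generates $G$ --- a hypothesis that Lemma~\ref{lem:intersection} does not contain --- so your argument that $\bar G$ is non-abelian (which you need in order to invoke the inductive hypothesis) does not go through. Second, and more seriously, the assertion that $\psi_*(V)$ is simple is exactly the hard point and is false in general: the $G/M$-grading is coarser than the $G$-grading, so distinct conjugates of $g$ lying in a single coset of $M$ are merged into one homogeneous component, and $\psi_*(V)$ may acquire Yetter--Drinfeld submodules that $V$ did not have. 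Both failures occur simultaneously for $G$ extraspecial of order $p^3$, $g\notin Z(G)$ and $V=M(g,\chi)$ with $\chi$ the trivial character of $C_G(g)$: there $M=Z(G)$ is the unique minimal normal subgroup, $\bar G\simeq C_p\times C_p$ is abelian, the conjugacy class of $g$ equals $gM$, and $\psi_*(V)$ is the regular $\fie C_p$-module concentrated in a single degree, which is not simple in characteristic $p$. In fact in this example $Z^*(G)\cap O_p(G)=G$ permutes the $p$ one-dimensional components of $V$ transitively and hence does not act trivially, so no completion of your descent is possible: the statement needs an additional hypothesis such as ``$\supp V$ generates $G$'' (which holds wherever the lemma is applied, and which by Lemma~\ref{lem:nilpotent} is incompatible with $G$ being nilpotent). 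For what it is worth, the paper's own proof conceals the same difficulty in the unjustified claim that a central element $z$ of $G/N$ lies in $C_G(g)N/N$, i.e.\ stabilizes the component $V_g$; in the example above it does not.
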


\begin{proof}
Let $\rho:G\to GL(V)$ be the representation corresponding to the $G$-action on $V$, and let $N = Z^*(G)\cap O_p(G) \cap \ker \rho$. Our goal is to prove that $N=Z^*(G)\cap O_p(G)$.

Let $z \in Z(G/N) \cap O_p(G/N)$ and choose $g \in \supp V$.
Since $V$ is simple and $G$ is finite, $V_g$ is a simple $\fie C_{G}(g)N/N$-module. As $z\in C_{G}(g)N/N$ is  central, it acts by a scalar on $V_g$, by the Schur lemma. 
Since $\car \fie =p$ and $z\in O_p(G/N)$, it follows that $z$ acts trivially on $V_g$. Thus $Z(G/N)\cap O_p(G/N)$ is trivial. Arguing recursively, 
we see that $Z_j(G/N)\cap O_p(G/N)$ is trivial, hence so is
$Z^*(G/N)\cap O_p(G/N)$. 
By Lemma~\ref{lem:Op(G)}, $(Z^*(G)\cap O_p(G))/N$ is trivial, i.e., $(Z^*(G)\cap O_p(G)) \leq N$, as needed.
\end{proof}

\subsection{Racks}\label{subsec:racks}
A flexible formulation of the braided vector spaces arising from Yetter-Drinfeld modules
over finite groups is in terms of racks and cocycles. 
Recall that a rack is a nonempty set $X$ provided with a self-distributive 
binary operation $\triangleright$ such that the map $\phi_{x}: X \to X$,
$\phi_x (y)= x \triangleright y$ is bijective.
We also denote $x \triangleright^{-1} y = \phi^{-1}_x (y)$.
A rack $X$ is \emph{abelian}  if $x \triangleright y = y$ for any $x,y \in X$.
Any group is a rack with the conjugation operation $x \triangleright y = xyx^{-1}$;
the `forgetful' functor from the category of groups to that of racks
has a left adjoint $X \mapsto G_X$, where $G_X$ (called the \emph{enveloping group} of the rack $X$)
is defined as
\begin{align*}
G_X = \langle e_x: x\in X \mid
e_x e_y = e_{x\triangleright y} e_x, \ x,y\in X \rangle.
\end{align*}
We shall also consider other groups attached to a rack $X$:
\begin{itemize}[leftmargin=*]\renewcommand{\labelitemi}{$\circ$}
\item The derived subgroup $D_X = [G_X,G_X]$ of the enveloping group $G_X$.

\medbreak
\item The group $\Inn X$ of inner automorphisms of $X$
is the subgroup of $\SG{X}$ generated by the image of $\phi: X \to \SG{X}$,
$x \mapsto \phi_{x}$. Thus $\Inn X$ acts on $X$ by rack automorphisms. 
\end{itemize}

\medbreak
A rack $X$ is said to be be \emph{faithful} if the map $\phi: X \to \SG{X}$ is injective.

\begin{exa}\label{exa:inner} \cite[1.9]{MR1994219}
Let $X$ be a subrack of a finite group $G$. If $X$ generates $G$, then
$\Inn X \simeq G/Z(G)$; thus $\Inn X \simeq G$, if $G$ has trivial center.
\end{exa}

\begin{lem}\label{lem:enveloping-center}
Let $\Phi: G_X \to \Inn X$ be the map induced by $\phi: X \to \SG{X}$.
Then $Z(G_X) = \ker \Phi$.
Furthermore, if $X$ is finite, then $D_X$ is finite.
\end{lem}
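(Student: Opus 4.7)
The plan is to reduce both assertions to a single conjugation formula inside $G_X$. Rewriting the defining relation as $e_x e_y e_x^{-1} = e_{x \triangleright y}$, a straightforward induction on the word length of an element of $G_X$ yields
\begin{align*}
g\, e_y\, g^{-1} = e_{\Phi(g)(y)} \qquad \text{for all } g \in G_X, \ y \in X.
\end{align*}
This conjugation formula is the main technical input, and it explains why the central elements of $G_X$ and the elements acting trivially on $X$ via $\Phi$ ought to coincide.

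From the formula, the inclusion $\ker \Phi \subseteq Z(G_X)$ is immediate: if $\Phi(g) = \id$, then $g$ commutes with every generator $e_y$ of $G_X$, hence lies in $Z(G_X)$. For the reverse inclusion, assume $g \in Z(G_X)$; then $e_{\Phi(g)(y)} = g\, e_y\, g^{-1} = e_y$ in $G_X$ for every $y \in X$. To conclude that $\Phi(g)(y) = y$, I would first apply $\Phi$ to get the equality $\phi_{\Phi(g)(y)} = \phi_y$ in $\Inn X$, and then use the injectivity of the canonical map $X \hookrightarrow G_X$ (equivalently, that distinct points of $X$ give distinct elements among the generators $e_y$) to pass from the equality of the $e$'s to the equality of the corresponding $y$'s.

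Once $Z(G_X) = \ker \Phi$ is established, the finiteness of $D_X$ is essentially formal. The first part of the lemma gives an embedding
\begin{align*}
G_X/Z(G_X) \,=\, G_X/\ker \Phi \,\hookrightarrow\, \Inn X \,\leq\, \SG{X},
\end{align*}
so $G_X/Z(G_X)$ is finite whenever $X$ is. A group whose central quotient is finite has a finite commutator subgroup by Schur's theorem, and hence $D_X = [G_X,G_X]$ is finite.

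The main obstacle I foresee is the second half of the first claim: upgrading the equality $e_{\Phi(g)(y)} = e_y$ inside $G_X$ to the pointwise equality $\Phi(g)(y) = y$ in $X$. Everything else — the conjugation formula, the inclusion $\ker\Phi\subseteq Z(G_X)$, and the passage from the first claim to the finiteness of $D_X$ via Schur — is routine once that injectivity (or an equivalent separating device for the generators) is in hand.
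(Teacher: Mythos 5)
Your argument is the paper's argument: the same conjugation formula $g\,e_y\,g^{-1} = e_{\Phi(g)(y)}$ yields both inclusions, and Schur's theorem applied to $G_X/Z(G_X) = G_X/\ker\Phi \hookrightarrow \Inn X \leq \SG{X}$ gives the finiteness of $D_X$. The obstacle you single out is, however, genuine and not merely cosmetic: for an arbitrary rack the map $x\mapsto e_x$ need not be injective, and the equality $Z(G_X)=\ker\Phi$ can actually fail. For instance, on $X=\{1,2\}$ with $x\triangleright y=\sigma(y)$, $\sigma$ the transposition, the relation $e_1e_1=e_{1\triangleright 1}e_1=e_2e_1$ forces $e_1=e_2$, so $G_X\simeq\ndZ$ is abelian while $\ker\Phi$ has index two. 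What rescues the lemma is the convention, announced at the end of this subsection, that every rack considered is (isomorphic to) a conjugation-stable subset of a group $G$: the universal property of $G_X$ then produces a group homomorphism $G_X\to G$ with $e_x\mapsto x$, so $x\mapsto e_x$ is injective and $e_{\Phi(g)(y)}=e_y$ does force $\Phi(g)(y)=y$. (In the actual applications the racks are moreover faithful, so one could instead apply $\Phi$ and use injectivity of $\phi$.) Finally, note a small wiring error in your proposed fix: after applying $\Phi$ you are comparing $\phi_{\Phi(g)(y)}$ with $\phi_y$ inside $\Inn X$, where injectivity of $X\to G_X$ is of no help; you should use either injectivity of $e$ directly in $G_X$, or injectivity of $\phi$ after applying $\Phi$ --- the two separating devices belong to different routes and should not be chained.
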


\begin{proof}
The group $G_X$ acts on $X$ by $g\rightharpoonup x= \Phi(g)(x)$ for all $g\in G_{X}$ and $x\in X$.
Then $g e_{x} = e_{g\rightharpoonup x} g$ for all $g$ and $x$, 
implying that $Z(G_X) = \ker \Phi$.
If $X$ is finite, $G_X/Z(G_X)$ is finite, and hence so is $D_X$ 
 by Schur's theorem \cite[Theorem 5.32]{MR1307623}. 
\end{proof}

Two relevant classes of racks are the indecomposable and the simple ones:

\begin{itemize}[leftmargin=*]\renewcommand{\labelitemi}{$\circ$}
\item
A rack $X$ is \emph{indecomposable} if it is not a disjoint union of two (proper) subracks,
or equivalently if it is one orbit for the action of $\Inn X$.

\medbreak\item
A rack $X$ is \emph{simple} if it is not abelian,
 $\vert X \vert >1$ and for any surjective morphism of racks $\pi: X \to Y$,
 either $\vert Y \vert =1$ or else $\vert Y \vert= \vert X \vert $
(and $\pi$ is bijective). 
Simple racks are indecomposable.
\end{itemize}

We are interested in the subracks of finite groups, particularly the
conjugacy classes. Not all racks are like this; a necessary (but not sufficient)
condition is that $x \triangleright x = x$ for all $x\in X$; racks satisfying this are 
called \emph{quandles}. The enveloping groups of finite indecomposable quandles are characterized 
in the following lemma.

\begin{lem}\label{lem:semidirect}
Let $X$ be a finite indecomposable quandle and let $b\in X$. 
Then $G_X\simeq D_X\rtimes\langle b\rangle$ and 
\begin{align}\label{eq:semidirect}
D_X \simeq\langle \gamma_x: x\in X \mid
\gamma_{b} = e, \ \gamma_{x}\, \gamma_{b\triangleright y}=
\gamma_{x\triangleright y}\,\gamma_{b\triangleright x}, \ x,y\in X \rangle.
\end{align}
\end{lem}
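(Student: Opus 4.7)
The plan is to first realize $G_X$ as a semidirect product $D_X \rtimes \langle b\rangle$ via an abelianization argument, and then to pin down the presentation of $D_X$ by a Reidemeister--Schreier style analysis, using the splitting. Concretely, the defining relation $e_x e_y = e_{x\triangleright y}e_x$ has equal length on both sides, so there is a well-defined homomorphism $\ell\colon G_X \to \ndZ$ with $\ell(e_x) = 1$ for every $x$. Modulo commutators, the defining relations read $e_y = e_{x\triangleright y}$, so the images of $e_x$ and $e_{x'}$ coincide in $G_X^{\mathrm{ab}}$ whenever $x, x'$ lie in the same $\Inn X$-orbit; indecomposability forces all these images to agree, and combined with surjectivity of $\ell$ one gets $G_X^{\mathrm{ab}} \simeq \ndZ$, hence $D_X = \ker\ell$. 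The section $n\mapsto e_b^n$ then splits $\ell$, yielding $G_X \simeq D_X \rtimes \langle e_b\rangle$, which we abbreviate as $D_X \rtimes \langle b\rangle$.

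Next I would set $\gamma_x := e_x e_b^{-1} \in D_X$; by construction $\gamma_b = e$, and a short computation using $e_b^{-1}e_{b\triangleright y} = e_y e_b^{-1}$ (the defining relation rewritten) gives
\begin{align*}
\gamma_x \gamma_{b\triangleright y} = e_x e_y e_b^{-2} = e_{x\triangleright y} e_x e_b^{-2} = \gamma_{x\triangleright y}\gamma_{b\triangleright x},
\end{align*}
so all relations in \eqref{eq:semidirect} hold in $D_X$. That $\{\gamma_x : x \in X\}$ in fact generates $D_X$ follows from the semidirect decomposition: any length-zero word in the $e_x^{\pm 1}$ is rewritten by replacing each $e_x$ by $\gamma_x e_b$ and pushing the $e_b$-powers to the right, producing a product of conjugates $e_b^n \gamma_x e_b^{-n} = \gamma_{\phi_b^n(x)}$, where the last identity uses that $\phi_b$ is a rack automorphism (a consequence of self-distributivity); each such factor already lies in $\{\gamma_y : y\in X\}$.

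Finally, to see these relations are exhaustive, I would construct an inverse via the universal property. Let $\tilde D$ denote the abstract group defined by \eqref{eq:semidirect}. Since $\phi_b$ is a rack automorphism, the assignment $\gamma_x \mapsto \gamma_{b\triangleright x}$ preserves the defining relations and extends to an automorphism $\tau$ of $\tilde D$; form $\tilde G := \tilde D \rtimes_\tau \ndZ$, with $t$ denoting the generator of $\ndZ$. The map $e_x \mapsto \gamma_x t$ sends the rack relation to
\begin{align*}
(\gamma_x t)(\gamma_y t) = \gamma_x\gamma_{b\triangleright y}\, t^2 = \gamma_{x\triangleright y}\gamma_{b\triangleright x}\, t^2 = (\gamma_{x\triangleright y} t)(\gamma_x t),
\end{align*}
hence defines a homomorphism $F\colon G_X \to \tilde G$ satisfying $F(e_x e_b^{-1}) = \gamma_x$. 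Composing with the tautological surjection $\tilde D \twoheadrightarrow D_X$, $\gamma_x\mapsto e_x e_b^{-1}$, exhibits a right inverse and thus shows the surjection is an isomorphism. The point I expect to require the most care is the well-definedness of $\tau$ on $\tilde D$, which ultimately reduces to $b\triangleright(x\triangleright y) = (b\triangleright x)\triangleright(b\triangleright y)$, i.e.\ to the self-distributivity of $\triangleright$.
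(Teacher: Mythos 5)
Your proposal is correct and follows essentially the same route as the paper: both build the semidirect product $\widetilde{D}\rtimes_{\phi}\ndZ$ from the presented group and exhibit mutually inverse homomorphisms with $G_X$, the paper merely stating tersely what you spell out via the length homomorphism and the rewriting argument for generation. One small point: the well-definedness of $\tau$ also requires $\tau(\gamma_b)=\gamma_{b\triangleright b}=e$, which is exactly where the quandle axiom $b\triangleright b=b$ (and not only self-distributivity) enters.
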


\begin{proof} Let $\widetilde{D}$ be the group in the right hand side of \eqref{eq:semidirect}.
Since $X$ is a quandle, there is a unique group automorphism $\phi\colon\widetilde{D}\to\widetilde{D}$ such that $\phi(\gamma_{x})= \gamma_{b\triangleright x}$ for all $x\in X$.
Thus, there is a unique action of $\ndZ$ on $\widetilde{D}$ by automorphisms, where a generator $z$ of $\ndZ$ acts via $\phi$.
Moreover, the map 
\begin{align*}
G_X\to\widetilde{D}\rtimes_{\phi}\ndZ ,\quad 
e_x\mapsto (\gamma_{x},z),\quad 
\end{align*}
is a group isomorphism with inverse $(\gamma_{x},0)\mapsto e_xe_b^{-1}$, $(e,z)\mapsto e_b$. In order to check that the inverse map is well-defined, one uses the relations in $G_X$
\begin{align*} e_x e_b^{-1}e_be_ye_b^{-2}=e_{x\triangleright y}e_b^{-1} e_be_xe_b^{-2},\quad x,y\in X. \end{align*}
Now the lemma follows since $D_X$ is generated by the elements $e_xe_b^{-1}$, $x\in X$.
\end{proof}

In what follows, all racks are finite and can be realized as subsets
of finite groups stable under conjugation, unless explicitly stated otherwise.

\subsection{Affine racks}\label{subsec:affine}

Let $\varGamma$ be a finite abelian group. For any $\aut\in\Aut \varGamma $, 
$\varGamma$ with
the operation $(x,y)\mapsto x\triangleright y=
(\id- \aut)(x)+ \aut(y)$ for all $x,y\in \varGamma$ is a rack, denoted $\Aff(\varGamma,\aut)$.
Racks like this are called \emph{affine}.

\medbreak
We say that $\aut\in\Aut \varGamma$
is \emph{fixed-point free} if $\aut(x)=x$ implies $x=0$. 
Affine racks arising from fixed-point free automorphisms have convenient properties. For example, in this case
$\Aff(\varGamma,\aut) \simeq \prescript{\varGamma \rtimes \aut}{}{\aut}$.

\begin{pro}\label{pro:affine-derived}
Let $\aut \in\Aut \varGamma $ be fixed-point free; set $X=\Aff(\varGamma,\aut)$. 
Then:
\begin{enumerate}[leftmargin=*,,label=\rm{(\roman*)}]
\item\label{item:affine-derived-1} The rack $X$ is faithful and indecomposable.

\medbreak
\item\label{item:affine-derived-2} The derived subgroup $D_X = [G_X, G_X]$ is finite and nilpotent. 

\medbreak
\item\label{item:affine-derived-3} 
We have $D_X/[D_X, D_X]\simeq \varGamma$. In particular, 
$[D_X,D_X]= D_X\cap Z(G_X)$ and 
$D_X$ is a central extension of $\varGamma$.

\medbreak
\item\label{item:affine-derived-4} 
If $\varGamma$ is a $p$-group, where $p$ is a prime number, then $D_X$ is a $p$-group.
 \end{enumerate} 
\end{pro}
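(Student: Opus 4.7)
The plan hinges on a single observation: since $\aut$ is fixed-point free, the endomorphism $\delta := \id_{\varGamma} - \aut$ is injective, hence bijective because $\varGamma$ is finite. This drives the rest.

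Part~\ref{item:affine-derived-1} is routine. For faithfulness, if $\phi_x = \phi_{x'}$, evaluating at $0$ gives $\delta(x) = \delta(x')$, so $x = x'$. For indecomposability, given $y, z \in \varGamma$, the element $x := \delta^{-1}(z - \aut(y))$ satisfies $\phi_x(y) = z$, showing that already $\phi(X) \subseteq \Inn X$ acts transitively on $X$.

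For \ref{item:affine-derived-2}--\ref{item:affine-derived-4}, I plan to exploit a natural surjection onto $H := \varGamma \rtimes \langle \aut \rangle$. A direct computation $(g,1)(0,\aut)(g,1)^{-1} = (\delta(g), \aut)$ shows that $x \mapsto (x, \aut)$ is a rack isomorphism between $X$ and the conjugacy class of $(0, \aut)$ in $H$, which is the whole set $\varGamma \times \{\aut\}$ precisely because $\delta$ is surjective. Since $X$ generates $H$, the universal property of the enveloping group yields a surjection
\[
\pi \colon G_X \twoheadrightarrow H, \qquad e_x \mapsto (x, \aut).
\]
A small check gives $Z(H) = \{e\}$: a central $(g, \aut^n)$ must have $\aut^n = \id$ (to commute with $\varGamma \times \{1\}$) and $g \in \varGamma^{\aut} = \{0\}$ (to commute with $(0, \aut)$). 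Hence $\pi$ kills $Z(G_X)$ and factors through $G_X/Z(G_X) \simeq \Inn X$ (Lemma~\ref{lem:enveloping-center}), which is isomorphic to $H$ by Example~\ref{exa:inner}. The induced surjection $H \twoheadrightarrow H$ between finite groups of equal order must be an isomorphism, so $\ker \pi = Z(G_X)$.

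Now \ref{item:affine-derived-2}--\ref{item:affine-derived-4} follow. Finiteness of $D_X$ is Lemma~\ref{lem:enveloping-center}. The restriction $\pi|_{D_X}$ lands in $\varGamma \times \{1\} \simeq \varGamma$ (since $D_X$ is in the kernel of $G_X \twoheadrightarrow \langle \aut \rangle$) and is surjective (witnessed by $\pi(e_x e_0^{-1}) = (x, 1)$), so $\ker(\pi|_{D_X}) = D_X \cap Z(G_X)$ is central in $D_X$ and automatically contains $[D_X, D_X]$. The main obstacle I foresee is the reverse inclusion, equivalently $D_X^{\mathrm{ab}} \simeq \varGamma$. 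I would handle this from the presentation of $D_X$ in Lemma~\ref{lem:semidirect} with $b = 0$: abelianizing the defining relation yields
\[
\bar\gamma_{\delta(x) + \aut(y)} = \bar\gamma_x + \bar\gamma_{\aut(y)} - \bar\gamma_{\aut(x)},
\]
and reparametrizing via the bijections $\delta$ and $\aut$ on $\varGamma$, then specializing at $y = 0$, identifies $v \mapsto \bar\gamma_v$ as an additive map $\varGamma \to D_X^{\mathrm{ab}}$, inverse to the surjection above. This proves \ref{item:affine-derived-3}, and \ref{item:affine-derived-2} is then immediate since $[D_X, D_X] \subseteq Z(D_X)$ forces nilpotency class at most two. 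For \ref{item:affine-derived-4}, a finite nilpotent group is a direct product of its Sylow subgroups, and if the abelianization $\varGamma$ is a $p$-group, each Sylow $\ell$-subgroup with $\ell \neq p$ is perfect, hence trivial.
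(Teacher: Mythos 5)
Your proof is correct and follows essentially the same route as the paper: the bijectivity of $\id-\aut$, the identification of $D_X\cap Z(G_X)$ as the kernel of the natural surjection $D_X\twoheadrightarrow\varGamma$, and the construction of the inverse additive map on $D_X/[D_X,D_X]$ from the presentation in Lemma~\ref{lem:semidirect} with $b=0$. The only cosmetic difference is that you route the first half through the explicit quotient $\varGamma\rtimes\langle\aut\rangle$ (which is $\Inn X$), whereas the paper works directly with the action of $G_X$ on $X$ via Lemma~\ref{lem:enveloping-center}; the content is the same.
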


\begin{proof} We identify $X$ with $\varGamma$; as above $e: X\to G_X$ is the canonical map. 
Observe that $0 \triangleright y = \aut(y)$, hence $0 \triangleright^{-1} y = \aut^{-1}(y)$
and
\begin{align}\label{eq:affine}
x \triangleright (0 \triangleright^{-1} y) &= (\id- \aut)(x)+ y &
\text{ for all } x, y &\in \varGamma. 
\end{align}
This implies \ref{item:affine-derived-1}, see 
\cite[\S1.3.8]{MR1994219}, \cite[Remark 3.14]{MR2786171}.

We now prove \ref{item:affine-derived-2}. Lemma \ref{lem:enveloping-center}
says that $D_X$ is finite. Since $X$ is indecomposable,
$D_X$ is generated by $\gamma_{x} = e_x e_{0}^{-1}\in G_X$
for all $x \in \varGamma$, cf. Lemma \ref{lem:semidirect}. From the identity
 \eqref{eq:affine}we get that 
\begin{align*}
\gamma_{x}\gamma_{z} \rightharpoonup y &= 
\gamma_{x} \rightharpoonup \left((\id- \aut)(z)+ y\right) = (\id- \aut)(x + z)+ y, &
\text{ for all } x, z, y &\in \varGamma, 
\end{align*}
hence $[\gamma_{x},\gamma_{z}]$ acts trivially on $X$ and therefore 
is central in $G_X$ by Lemma~\ref{lem:enveloping-center}. 
Hence $[D_X,D_X]\leq Z(G_X)$ and $D_X$ is nilpotent. 

\medbreak
We next deal with \ref{item:affine-derived-3}.
To show that $D_X/[D_X,D_X]\simeq \varGamma$, we first observe that there is a 
group homomorphism $\Psi: D_X \to \varGamma$ determined by 
\begin{align*}
\gamma_{x} & \mapsto(\id- \aut)(x), & x &\in \varGamma.
\end{align*}
This follows from Lemma~\ref{lem:semidirect} with $b=0$, checking the relations in \eqref{eq:semidirect}. Clearly, $d \rightharpoonup y = \Psi(d) + y$ for any $d \in D_X$
and $y \in \varGamma$, as this holds when $d = \gamma_x$. Hence 
$\ker \Psi = Z(G_X) \cap D_X$. Since $\varGamma$ is abelian, $\Psi$ induces a 
group homomorphism 
\begin{align*}
\eta : D_X/[D_X,D_X] \to \varGamma
\end{align*}
which is surjective and has $\ker \eta  \simeq (Z(G_X) \cap D_X) / [D_X,D_X]$.
Now since $\id - \aut$ is bijective by hypothesis, the map
\begin{align*}
\theta: \varGamma &\to D_X/[D_X,D_X], & \theta((\id - \aut)(x)) &= \overline{\gamma_x},&
x &\in \varGamma,
\end{align*}
is well-defined, where $\overline{z}$ is the class of $z$ in $D_X/[D_X,D_X]$. We claim that $\theta$ is a morphism of groups.
Indeed, taking $y =0$ in the second relation of \eqref{eq:semidirect} we get
\begin{align*}
\overline{\gamma_{x}} =
\overline{\gamma_{(\id - \aut)(x)}}\,\overline{\gamma_{\aut x}}.
\end{align*}
Looking again at the second relation of \eqref{eq:semidirect}, we compute
\begin{align*}
\overline{\gamma_{(\id- \aut)(x)+ \aut(y)}} &= \overline{\gamma_{x\triangleright y}}=
\overline{\gamma_{x}}\, \overline{\gamma_{\aut(y)}}\,\overline{\gamma_{\aut (x)}^{-1}}
= \overline{\gamma_{(\id - \aut)(x)}}\,\overline{\gamma_{\aut (y)}}.
\end{align*}
Since both $\aut $ and $1-\aut $ are bijective, we see that $\overline{\gamma_{u + v}} 
= \overline{\gamma_{u}}\,\cdot \overline{\gamma_{v}}$ for all $u,v \in \varGamma$ and
the claim follows.
Now obviously $\eta \theta = \id_{\varGamma}$, while $\theta \eta = \id_{D_X/[D_X,D_X]}$
because $\theta \eta $ is a morphism of groups that fixes the generators $\overline{\gamma_x}$.
Hence $\eta $ is an isomorphism and in particular $D_X\cap Z(G_X)=[D_X,D_X]$.

Finally we prove \ref{item:affine-derived-4}. By \ref{item:affine-derived-2}, 
$D_X$, being finite and nilpotent, is isomorphic to the direct product of its Sylow subgroups. 
Hence $D_X$ is a $p$-group if and only if $D_X/[D_X,D_X]$ is a $p$-group,
and the claim follows from \ref{item:affine-derived-3}.
\end{proof}

\begin{rem}
\label{rem:affine_indec}
The converse of Proposition~\ref{pro:affine-derived}\ref{item:affine-derived-1} holds by a similar proof:
If $X=\Aff(\Gamma,\aut)$ is a finite indecomposable affine rack, then 
$\aut $ is fixed-point free. 
\end{rem}

\begin{cor}
\label{cor:abelian_centralizers}
Let $G$ be a group generated by a conjugacy 
class $X$ of $G$, $x\in X$, and let $D=[G,G]$. Assume that 
$X\simeq  \Aff(\varGamma,\aut)$
for some finite abelian group $\varGamma$ and
a fixed-point free automorphism $\aut$ of $\varGamma$. The following statements hold:
\begin{enumerate}[leftmargin=*,,label=\rm{(\roman*)}]
\item $D/[D,D]\simeq  \varGamma$. Hence
$[D,D]= D\cap Z(G)$ and 
$D$ is a central extension of $\varGamma$.

\medbreak
\item $C_G(x)=\langle x,[D,D]\rangle$. 

\medbreak
\item $C_G(x)$ is abelian. 

\medbreak
\item\label{item:abelian-cetralizers} $G$ is solvable.
\end{enumerate} 
\end{cor}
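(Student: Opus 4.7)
My plan is to transfer Proposition~\ref{pro:affine-derived} from $G_X$ to $G$ via the canonical surjective map $\pi\colon G_X\to G$, $e_x\mapsto x$, which exists because $X\subseteq G$ is a conjugacy class (hence a subrack), and is surjective because $X$ generates $G$. The key bridge is that the map $\Phi\colon G_X\to\Inn X$ from Lemma~\ref{lem:enveloping-center} and the natural map $i_G\colon G\to\Inn X$ from Example~\ref{exa:inner} fit into a commutative triangle $\Phi=i_G\circ\pi$; since $\ker i_G=Z(G)$ and $\ker\Phi=Z(G_X)$, one concludes $\ker\pi\subseteq Z(G_X)$. Restricting to $D_X=[G_X,G_X]$ and using $Z(G_X)\cap D_X=[D_X,D_X]$ from Proposition~\ref{pro:affine-derived}\ref{item:affine-derived-3}, this yields $\ker(\pi|_{D_X})\subseteq[D_X,D_X]$.

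For (i), the containment above immediately gives $[D,D]=\pi([D_X,D_X])$ and the isomorphism $D/[D,D]\simeq D_X/[D_X,D_X]\simeq\varGamma$. Centrality $[D,D]=\pi([D_X,D_X])\subseteq\pi(Z(G_X))\subseteq Z(G)$ is then automatic. For the reverse inclusion $D\cap Z(G)\subseteq[D,D]$ I would lift any $z\in D\cap Z(G)$ to some $\tilde z\in D_X$; since $z$ acts trivially on $X$ by conjugation, so does $\tilde z$ through $\Phi$, whence $\tilde z\in Z(G_X)\cap D_X=[D_X,D_X]$ and $z\in[D,D]$. The central extension statement follows at once.

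For (ii) I would combine (i) with the transitivity of $D=[G,G]$ on $X$ from Lemma~\ref{lem:commutator} to get $[D:C_D(x)]=|X|=[D:[D,D]]$, and since $[D,D]\subseteq Z(G)\cap D\subseteq C_D(x)$ this forces $C_D(x)=[D,D]$. Next, the images of the elements of $X$ in the abelian group $G/D$ coincide and generate $G/D$, so $G/D=\langle\bar x\rangle$ is cyclic; in particular $C_G(x)\cdot D=G$, forcing $C_G(x)/C_D(x)\simeq G/D$ to be generated by the class of $x$. Choosing coset representatives in $\langle x\rangle$ then yields $C_G(x)=\langle x\rangle\cdot[D,D]=\langle x,[D,D]\rangle$. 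Part (iii) is then immediate because $\langle x\rangle$ commutes with the central subgroup $[D,D]$, and (iv) follows because $G/D$ is abelian while $D$ is nilpotent of class at most $2$ (its commutator is central in $G$), so the chain $\{e\}\subseteq[D,D]\subseteq D\subseteq G$ is a subnormal series with abelian factors. No individual step should be difficult; the one conceptual move is the opening observation $\ker\pi\subseteq Z(G_X)$, which is what allows the transfer of Proposition~\ref{pro:affine-derived} from $G_X$ down to $G$.
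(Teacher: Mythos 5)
Your proposal is correct and follows essentially the same route as the paper: both transfer Proposition~\ref{pro:affine-derived} along the natural maps $G_X \to G \to \Inn X$, using $Z(G_X)\cap D_X=[D_X,D_X]$ to identify $D/[D,D]$ with $\varGamma$ (the paper packages your observation $\ker\pi\subseteq Z(G_X)$ as a commutative diagram with exact rows). The paper dismisses (ii)--(iv) as straightforward; your counting argument via Lemma~\ref{lem:commutator} and the cyclicity of $G/D$ is a valid way of filling in exactly those details.
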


\begin{proof} Clearly, $\Inn X = \varGamma \rtimes \langle \aut \rangle$
has derived subgroup $\varGamma$. Let $\varLambda = D/[D,D]$.
The natural maps $G_X \overset{p}{\to} G \overset{q}{\to} \Inn X$ induce the vertical morphisms
in the following diagram with exact rows:
\begin{align*}
\xymatrix@R=1pc{ 0 \ar @{->}[r] & [D_X, D_X] \ar @{->}[r]\ar @{->}^{p_1}[d] 
& D_X  \ar @{->}[r] \ar @{->}^{p_2}[d] & \varGamma  \ar @{->}^{p_3}[d]
\ar @{->}[r] & 0 
\\
0 \ar @{->}[r] &[D, D] \ar @{->}[r] \ar @{->}^{q_1}[d] & D  \ar @{->}[r]
\ar @{->}^{q_2}[d] & \varLambda  \ar @{->}^{q_3}[d]  \ar @{->}[r] & 0
\\
0 \ar @{->}[r] & 0 \ar @{->}[r] & \varGamma  \ar @{->}[r] 
& \varGamma  \ar @{->}[r] & 0.}
\end{align*}
This implies that $\varLambda \simeq \varGamma$. The rest of the proof is straightforward. For the last claim, observe that $G$ is solvable because $[D,D]$, $D/ [D,D]$ and $G/D$ are abelian. 
\end{proof}

\begin{lem}
\label{lem:Aff}
Let $G$ be a finite group, $\varGamma \lhd G$ and $g\in G$. Then
\begin{align*}
X=\{\gamma g\gamma^{-1}\mid \gamma \in \varGamma \}
\end{align*}
is a subrack of $G$. 
Let $\aut \in \Aut \varGamma$, $\aut(\gamma)=g\gamma g^{-1}$ for all $\gamma \in \varGamma $.
If $\aut $ is fixed-point free, then $|X|=|\varGamma |$.
If furthermore $\varGamma$ is abelian, then
$X$ is isomorphic to $\Aff (\varGamma ,\aut)$.
\end{lem}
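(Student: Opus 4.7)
The plan is to handle the three assertions in order, as each builds on the previous. For the first, I would verify directly that $X$ is closed under the conjugation operation. Writing $x=\gamma g\gamma^{-1}\in X$, I would first note that $x=\mu g$ with $\mu\coloneqq \gamma g\gamma^{-1}g^{-1}=\gamma\aut(\gamma^{-1})\in\varGamma$ (this element lies in $\varGamma$ by normality of $\varGamma$ in $G$). A one-line computation then yields $xgx^{-1}=\mu g\mu^{-1}\in X$. For a second element $y=\delta g\delta^{-1}\in X$, I would expand $xyx^{-1}=(x\delta x^{-1})(xgx^{-1})(x\delta x^{-1})^{-1}$ and observe that $\eta\coloneqq x\delta x^{-1}$ again belongs to $\varGamma$ by normality, so $xyx^{-1}=(\eta\mu)g(\eta\mu)^{-1}\in X$.

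For the second assertion, I would examine the obvious surjection $\varGamma\to X$, $\gamma\mapsto\gamma g\gamma^{-1}$, and establish its injectivity. If $\gamma g\gamma^{-1}=\delta g\delta^{-1}$ then $\delta^{-1}\gamma$ centralizes $g$; since $\delta^{-1}\gamma\in\varGamma$, the centralizer condition $g(\delta^{-1}\gamma)g^{-1}=\delta^{-1}\gamma$ is exactly $\aut(\delta^{-1}\gamma)=\delta^{-1}\gamma$, and the fixed-point freeness of $\aut$ forces $\delta^{-1}\gamma=e$.

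For the third assertion, I would take as candidate isomorphism the bijection $f\colon\Aff(\varGamma,\aut)\to X$, $f(\gamma)=\gamma g\gamma^{-1}$, and verify that it respects the rack operations. Specializing the first paragraph to abelian $\varGamma$ and using $\gamma^{-1}\delta\gamma=\delta$, one obtains $\eta=\gamma g\delta g^{-1}\gamma^{-1}=\aut(\delta)$ and $\mu=\gamma\aut(\gamma)^{-1}$. Writing $\varGamma$ additively, $\eta\mu=(\id-\aut)(\gamma)+\aut(\delta)$, which is precisely $\gamma\triangleright\delta$ in $\Aff(\varGamma,\aut)$; hence $f(\gamma)f(\delta)f(\gamma)^{-1}=f(\gamma\triangleright\delta)$, and $f$ is a rack isomorphism. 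None of the three steps poses a real obstacle; the only technical point to keep in mind is the repeated use of the normality of $\varGamma$ together with the identity $g\nu g^{-1}=\aut(\nu)$ for $\nu\in\varGamma$.
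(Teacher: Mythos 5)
Your proposal is correct and follows essentially the same route as the paper: a direct closure computation using normality of $\varGamma$ (your element $\eta\mu=\gamma g\gamma^{-1}\delta g^{-1}$ is exactly the conjugator $\gamma_1g\gamma_1^{-1}\gamma_2 g^{-1}$ appearing in the paper's calculation), followed by the same map $\gamma\mapsto\gamma g\gamma^{-1}$ for the bijectivity and isomorphism claims. The only difference is that you spell out the injectivity and cocycle-compatibility checks that the paper dismisses as ``a direct verification,'' which is harmless.
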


\begin{proof} If $\gamma_1,\gamma_2\in \varGamma$, then
\begin{align*}
(\gamma_1 \triangleright g) (\gamma_2 \triangleright g)(\gamma_1 \triangleright g)^{-1}
&=\gamma_1g\gamma_1^{-1}\gamma_2g\gamma_2^{-1}\gamma_1g^{-1}\gamma_1^{-1}
\\ &= \left(\gamma_1g\gamma_1^{-1}\gamma_2 \right) \triangleright g
= \left(\gamma_1g\gamma_1^{-1}\gamma_2 g^{-1}\right) \triangleright g.
\end{align*}
Now $\gamma_1g\gamma_1^{-1}\gamma_2 g^{-1} \in \varGamma$ by hypothesis, and $X$
is a subrack of $G$.
If $\aut$ is fixed-point free, then $|X|=|\varGamma |$ by a direct verification. 
If $\varGamma$ is abelian, then $\Aff (\varGamma ,\aut) \to X$, $\gamma \mapsto \gamma \triangleright g$ is the desired isomorphism.
\end{proof}

\subsection{Braided vector spaces of rack type}\label{subsec:bvs-rack}
Let $X$ be a rack. A \emph{2-cocycle} on $X$ is a function $q: X \times X \to \fie^{\times}$
such that 
$q_{x,y \triangleright z} q_{y,z} = q_{x \triangleright y, x \triangleright z} q_{x,z}$
for all $x,y,z \in X$. Such a 2-cocycle $q$ gives rise to a braided vector space
$(\fie X, c^q)$ with basis $(v_x)_{x \in X}$ where $c^q \in GL(\fie X \otimes \fie X)$ 
is defined by
\begin{align*}
c^q (v_x \otimes v_y) &= q_{x,y} v_{x \triangleright y} \otimes v_x,& x,y &\in X.
\end{align*}
If a rack is abelian, then any function $q: X \times X \to \fie^{\times}$ 
is a cocycle and the
corresponding braided vector space is said to be of \emph{diagonal type}. If moreover for all $x,y\in X$ there exist $a_{x,y}\in -\ndN _0$ such that 
\[
q_{x,y}q_{y,x}=q_{x,x}^{a_{x,y}},
\]
then the corresponding braided vector space 
is said to be of \emph{Cartan type}.

\medbreak
A realization of the braided vector space $(\fie X, c^q)$ over a group $G$
is a collection $(V, g, \nu)$, where $V \in \yd{\fie G}$, 
$g: X \to \supp V$ is an isomorphism of racks and $\nu: X \to V$ is a map such that 
\begin{align*}
(\nu_x)_{x\in X} &\text{ is a basis of } V, & \nu_x &\in V_{g_x}, & g_x \cdot \nu_y &= q_{xy} \nu_{x \triangleright y} & \text{ for all } x,y &\in X.
\end{align*}
Thus the braided vector space underlying $V$ is isomorphic to $(\fie X, c^q)$.
Clearly, the image of $g$ is always a union of conjugacy classes of $G$; if $X$ is indecomposable, it is necessarily a conjugacy class of $G$.
The realization is \emph{strong} if the image of $g$ is a conjugacy class of $G$ 
that generates $G$. 
We say that $(\fie X, c^q)$ is (strongly) realizable over $G$ if it admits a (strong) realization.
The same braided vector space can be realized over many groups in many ways.
The Yetter-Drinfeld modules of our interest 
are realizations of braided vector spaces as above. 

\begin{rem}
There are braided vector spaces $(\fie X, c^{\bq})$ where $X$ is a rack and $\bq: X \times X \to GL(n,\fie)$ is a non-abelian 2-cocycle, i.e., it satisfies the analogue of the condition above. We refrain of stating explicitly the definition of their realization.
\end{rem}

\subsection{Main examples}\label{subsec:main-examples}
Here we discuss the Yetter-Drinfeld modules  with indecomposable support
over solvable groups
which have finite-dimensional Nichols algebra 
and are relevant for the present paper;
see Subsection~\ref{subsec:Nichols} for the notion of Nichols algebra.
These Yetter-Drinfeld modules 
are strong realizations of the braided vector spaces 
$(\fie X, c^{q})$ listed below.
Their Nichols algebras are denoted by $\NA(X, q)$.

\medbreak
\begin{exa}\label{exa:affine} 
Consider the affine racks
$\Aff(\ndF_p, d) = \Aff(\varGamma, \aut)$ with $\varGamma = \ndF_p$, $p$ prime, and 
$\aut$ being multiplication by $d \in \ndF_p^{\times}$, where 
\begin{align}\label{eq:exa-affine}
(\ndF_p,d)\in\{(\ndF_3,2),(\ndF_5,2),(\ndF_5,3),(\ndF_7,3),(\ndF_7,5)\}.
\end{align}
Let $X$ be a rack in this list and let $\epsilon$ be the constant cocycle $-1$; 
then $(\fie X, c^{\epsilon})$ is realizable over $\Inn X\simeq \ndF_p \rtimes C_{p-1}$, where 
$C_{k}$ for $k\ge 2$ denotes the cyclic group of order $k$. 
More generally, if $(\fie X, c^{\epsilon})$ is strongly realizable over a finite group $G$, 
i.e.,  $G$ is generated by $\supp V \simeq X$, 
then $G$ is a quotient of the enveloping group $G_X$ and covers the group
$\ndF_p \rtimes C_{p-1}$. 

\medbreak
If $(\ndF_p, d)$ is as in \eqref{eq:exa-affine}, then 
$\dim \NA(\Aff(\ndF_p, d), \epsilon) = (p-1)^{p-1} p$,
cf.~\cite{MR1667680,MR1800714,MR1800709,MR1994219} 
for the case where $\fie$ is of characteristic zero and
\cite{MR2803792} for positive characteristics.
\end{exa}

\medbreak
\begin{exa}
\label{exa:T}
Consider the affine rack
$\tetra = \Aff( \ndF_4,\aut)$ 
where $\aut$ 
is multiplication by $\bomega \in \mathbb F_4\backslash \{0,1\}$ (called the tetrahedron rack).
It is also isomorphic to each of the conjugacy classes of $3$-cycles in $\mathbb A_4 \simeq \Inn \tetra$.
Let $G$ be the enveloping group of $\tetra$ and let $V\in \yd{\fie G}$ be such that $\supp V\simeq \tetra $ and $\dim V_x=1$ for all $x\in \supp V$.
Then there exist $g_1,g_2,g_3,g_4\in \supp V$, $v_i\in V_{g_i}$ for all $1\le i\le 4$, and $q\in \fie ^\times $, $\epsilon \in \{1,-1\}$, such that $g_i\cdot v_j$ for $1\le i,j\le 4$ is given by Table~\ref{tab:T}.

\begin{table}[ht]
\centering
\caption{Yetter-Drinfeld modules with support isomorphic to $\tetra$.}
\begin{tabular}{c|cccc}
$V$ & $v_{1}$ & $v_{2}$ & $v_{3}$ & $v_{4}$\tabularnewline
\hline
$g_{1}$ & $qv_{1}$ & $qv_{4}$ & $qv_{2}$ & $qv_{3}$\tabularnewline
$g_{2}$ & $qv_{3}$ & $qv_{2}$ & $\epsilon qv_{4}$ & $\epsilon qv_{1}$\tabularnewline
$g_{3}$ & $qv_{4}$ & $\epsilon qv_{1}$ & $qv_{3}$ & $\epsilon qv_{2}$\tabularnewline
$g_{4}$ & $qv_{2}$ & $\epsilon qv_{3}$ & $\epsilon qv_{1}$ & $qv_{4}$\tabularnewline
\end{tabular}
\label{tab:T}
\end{table}

There are two relevant $2$-cocycles, one corresponding to 
$(\epsilon,q)=(1,-1)$ and the other to  $(\epsilon,q)=(-1,\omega )$, where $\omega\in \fie $ such that $\omega^2+\omega+1=0$.

\medbreak
The dimensions of the Nichols algebras are as follows.
\begin{itemize}[leftmargin=*]\renewcommand{\labelitemi}{$\circ$}
 \item $\dim \NA(V) = 36$ if $(\epsilon,q)=(1,-1)$ and $\car \fie =2$, by \cite{MR2803792};
 
 \medbreak
 \item $\dim \NA(V) = 72$ if $(\epsilon,q)=(1,-1)$ and $\car \fie \ne 2$, by \cite{MR1800709,MR2803792};

 \medbreak
 \item $\dim \NA(V) = 5184$ if $(\epsilon,q)=(-1,\omega )$ and $\omega\in \fie $ such that $\omega^2+\omega+1=0$, by \cite{MR2891215}.
\end{itemize}
The corresponding braided vector spaces 
are not realizable over $\mathbb A_4$.
\end{exa}

\medbreak
\begin{exa} \label{exa:S4a}
Let $\Oc^4_2$ be the conjugacy class of transpositions in $\mathbb S_4$.
We consider:

\begin{itemize}[leftmargin=*]\renewcommand{\labelitemi}{$\circ$}
\item The constant cocycle $\epsilon = -1$;

\medbreak
\item
the 2-cocycle $\chi_4: \Oc^4_2 \times \Oc^4_2 \to \fiet$ defined by 
\begin{align*}
\chi_{4}(\sigma, (ij)) &= \begin{cases} 1, & \text{if } \sigma(i) < \sigma (j),
\\ -1, & \text{if } \sigma(i) > \sigma (j),\end{cases} &
\sigma &\in \Oc^4_2, \ i <j. 
\end{align*}
\end{itemize}

\medbreak
The braided vector spaces $(\fie \Oc^4_2, c^{\epsilon})$ and $(\fie \Oc^4_2, c^{\chi_4})$
are realizable over $\mathbb S_4$. 
Both Nichols algebras $\NA(\Oc^4_2, \epsilon)$ and $\NA(\Oc^4_2, \chi_{4})$
have dimension $576$, see \cite{MR1667680,MR1800714} for fields of characteristic zero and
\cite{MR2803792} for fields of positive characteristics.
\end{exa}

\medbreak
\begin{exa} \label{exa:S4b}
Let $\Oc^4_4$ be the conjugacy class of $4$-cycles in $\mathbb S_4$
and let $\epsilon$ be the constant cocycle $-1$; 
then $(\fie \Oc^4_4, c^{\epsilon})$ is realizable over $\mathbb S_4$.
The Nichols algebra $\NA(\Oc^4_4, \epsilon)$ also
has dimension $576$, see \cite{MR1994219,MR2803792}.
\end{exa}

\begin{rem}[\cite{MR2926571}]
\label{rem:quandles6}
Up to rack isomorphisms, $\Oc^4_2$ and $\Oc^4_4$ are the only indecomposable quandles of size $6$. 
Recall that $G_X$ is the enveloping group of a rack $X$.
\begin{enumerate}[leftmargin=*,label=\rm{(\roman*)}]
\item Let $X = \Oc^4_2$ and $g\in X$. Then the centralizer $C_{G_X}(e_g)$ in $G_X$ is isomorphic to $\ndZ\times C_2$. Indeed $C_{G_X}(e_g) = \langle e_g, e_g e_h^{-1}\rangle$, where $h$ is the unique element of $X\setminus \{g\}$ commuting with $g$. Note that $e_ge_h^{-1}\in [G_X,G_X]$  has order two.

\medbreak
\item Let $X = \Oc^4_2$  and $g\in X$. Then  
$C_{G_X}(e_g)$ is isomorphic to $\ndZ\times C_4$.
Indeed, $C_{G_X}(g)= \langle e_g, e_g e_h^{-1}\rangle$, where $h$ is the unique element of $X\setminus \{g\}$ commuting with $g$. Note that $e_g e_h^{-1}\in [G_X,G_X]$ has order four.
\end{enumerate}
\end{rem}

\begin{lem} Let $(\fie X, c^q)$ be as in one of the Examples 
\emph{\ref{exa:affine}, \ref{exa:T}, \ref{exa:S4a}} or 
\emph{\ref{exa:S4b}}. If $(\fie X, c^q)$ is realizable over
a finite group $G$, then $\vert G\vert$ is even and $\gcd(\vert G \vert, 105) \neq 1$.
\end{lem}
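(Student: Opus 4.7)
The plan is to reduce the claim to a short case check by bounding $|G|$ in terms of $|\Inn X|$. First, I would observe that if $V\in\yd{\fie G}$ realizes $(\fie X, c^q)$, then $\supp V$ is a union of $G$-conjugacy classes, and since each rack in Examples~\ref{exa:affine}, \ref{exa:T}, \ref{exa:S4a}, \ref{exa:S4b} is indecomposable, $\supp V$ is a single conjugacy class of $G$ isomorphic to $X$ as a rack. Let $H=\langle \supp V\rangle\leq G$; being generated by a $G$-stable subset, $H$ is a normal subgroup of $G$, so $|H|\mid |G|$. Since $\supp V\simeq X$ generates $H$, Example~\ref{exa:inner} yields $\Inn X\simeq H/Z(H)$. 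Hence every prime dividing $|\Inn X|$ divides $|G|$, and the statement reduces to checking that in every case $|\Inn X|$ is even and shares a common factor with $105=3\cdot 5\cdot 7$.

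Second, I would tabulate $|\Inn X|$ rack by rack. For an affine rack $\Aff(\ndF_p,d)$ with $(p,d)$ in \eqref{eq:exa-affine}, the inner automorphism group is $\ndF_p\rtimes \langle d\rangle$, and in each of the five listed cases the element $d$ generates $\ndF_p^\times$, giving $|\Inn X|=p(p-1)$, i.e.\ the orders $6, 20, 20, 42, 42$. For $X=\tetra$, the rack is identified with a conjugacy class of $3$-cycles in $\AG{4}$; since $Z(\AG{4})=1$ and $\tetra$ generates $\AG{4}$, Example~\ref{exa:inner} gives $\Inn\tetra\simeq \AG{4}$ of order $12$. For $X\in\{\Oc^4_2,\Oc^4_4\}$, $X$ generates $\SG{4}$ and $Z(\SG{4})=1$, so $\Inn X\simeq \SG{4}$ of order $24$.

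Finally, inspecting the list $\{6,12,20,24,42\}$ of possible values of $|\Inn X|$: each is even, and each is divisible by at least one of $3$, $5$, $7$, so both $2\mid |G|$ and $\gcd(|G|,105)>1$. I do not anticipate a serious obstacle; the only delicate point is that the realization is not assumed to be strong, so one must be careful to work with the subgroup $H=\langle\supp V\rangle$ rather than with $G$ itself, and transfer divisibility from $\Inn X$ to $H$ (and thence to $G$) via the isomorphism $\Inn X\simeq H/Z(H)$.
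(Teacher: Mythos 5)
Your proof is correct and follows essentially the same strategy as the paper: establish that $\vert\Inn X\vert$ divides $\vert G\vert$ and then tabulate $\vert\Inn X\vert\in\{6,12,20,24,42\}$ for the racks in question. The only (immaterial) difference is in how the divisibility is obtained — the paper uses the conjugation homomorphism $\vartheta\colon G\to\SG{X}$, whose image contains $\Inn X$, whereas you pass to the subgroup $H=\langle\supp V\rangle$ and invoke $\Inn X\simeq H/Z(H)$ together with Lagrange (for which the normality of $H$ is not even needed); both yield $\vert\Inn X\vert\mid\vert G\vert$.
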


\begin{proof} Using Example \ref{exa:inner}, we see that
\begin{align*}
\Inn \Aff(\ndF_p, d) &\simeq \ndF_p \rtimes C_{p-1},&
\Inn \tetra &\simeq \mathbb A_4,&
\Inn \Oc^4_2 &\simeq \mathbb S_4 \simeq \Inn \Oc^4_4.
\end{align*}
Let $(V, g, \nu)$ be a realization of $(\fie X, c^q)$ over $G$.
Then $G$ acts on $X$ by $k \rightharpoonup x = y$ if 
$k \triangleright g_x = g_y$, $k \in G$, $x \in X$.
Since $g_z \triangleright g_x = g_{z \triangleright x}$ for $x, z \in X$, 
we have $\Inn X \leq \operatorname{Im} \vartheta$, where 
$\vartheta: G \to \mathbb S_X$ is induced by the action
$\rightharpoonup$. Hence $\vert\Inn X \vert$ divides $\vert G \vert$.
\end{proof}
 
\begin{rem}\label{rem:FK5}
Only two finite-dimensional Nichols algebras with indecomposable 
support over non-solvable groups are known, namely $\NA(\Oc^5_2, \epsilon)$ and $\NA(\Oc^5_2, \chi_{5})$ (here $\chi_{5}$ 
is analogous to $\chi_{4}$);
both have dimension $2^{12}3^45^2=8\,294\,400$.
\end{rem}

\section{Graded and filtered Hopf algebras}
\subsection{Graded Hopf algebras}\label{subsection:graded-bosonization}
Recall that an $\ndN_0$-graded Hopf algebra
is a Hopf algebra $A$ provided with a decomposition
$A=\bigoplus_{i \in \ndN_0} A(i)$ 
such that the structure maps are homogeneous of degree zero, that is
\begin{align*}
A(i)A(j) &\subseteq A(i+j),& \Delta(A(i)) 
& \subseteq \sum_{0\le j \le i} A(j) \otimes A(i-j),&
\Ss(A(i)) &= A(i),
\end{align*}
for every $i,j \in \ndN_0$. As is well-known, it follows that
$1\in A(0)$, $\varepsilon(A(k))=0$ for $k>0$.
For short, we shall say graded Hopf algebra as no other gradings will appear.
Graded Hopf algebras in the braided tensor category $\yd{H}$ of Yetter-Drinfeld
modules over a Hopf algebra $H$ are defined similarly.

\medbreak
Given a graded Hopf algebra $A$,
$A(0)$ is a Hopf subalgebra of $A$ and the inclusion and the homogeneous projection 
\begin{align*}
\iota: A(0)\hookrightarrow A \qquad \text{ and } \qquad \pi:A \twoheadrightarrow A(0)
\end{align*}
are Hopf algebra maps. Then $A$ is a right $A(0)$-comodule via the \emph{canonical right $A(0)$-coaction}
\begin{align}\label{eq:def-rho}
\rho=(\id \ot \pi)\Delta: A\to A\ot A(0).
\end{align}
Hence $A^{\co \rho} = \{a \in A: \rho(a) = a \otimes 1\}$ 
is a graded Hopf algebra in $\yd{A(0)}$ and
$A\simeq A^{\co \rho}\# A(0)$, the bosonization of $A^{\co \rho}$ by $A(0)$.
In some occasions, we write $A^{\co A(0)}$ instead of $A^{\co \rho}$.
See e.g. \cite[Section 4.3]{MR4164719} for details.

\subsection{Filtered Hopf algebras}
Recall that a \emph{decreasing Hopf algebra filtration} of a Hopf algebra $H$
is a family $\cF = (H_i)_{i\in \ndN_0}$ of subspaces of $H$ such that
\begin{enumerate}[leftmargin=*,,label=\rm{(\arabic*)}]
\item $H_0=H$, and $H_i\supseteq H_j$ for all $0\le i\le j$,

\medbreak
\item $H_iH_j\subseteq H_{i+j}$ for all $i,j\ge 0$, 

\medbreak
\item $\Delta(H_i)\subseteq \sum_{0 \le j \le i} H_j\ot H_{i-j}$ for all $i\ge 0$, and 

\medbreak
\item $\varepsilon (H_i)=0$ and $\Ss(H_i)\subseteq H_i$ for all $i\ge 1$.
\end{enumerate}
Given such a filtration $\cF$, the \emph{associated graded Hopf algebra} is 
\begin{align*}
\gr_\cF H &=\bigoplus_{i \in \ndN_0} \gr^i_\cF H ,& &\text{where } 
&\gr^i_\cF H &= H_i/H_{i+1}.
\end{align*}
Thus, $(\gr _\cF H)^{\co \gr _\cF^0 H}$ is a Hopf algebra in $\yd{\gr _\cF^0 H}$
and
$\gr_\cF H$ is isomorphic to the bosonization
$(\gr _\cF H)^{\co \gr _\cF^0 H} \# \gr _\cF^0 H$. 
 The canonical projections are denoted by 
\begin{align*}
\varrho_i: H_i &\to \gr^i_\cF H, & i &\in \ndN_0.
\end{align*}

\medbreak
The filtration $\cF$ is \emph{separated} if $\bigcap_{n\in \ndN_0} H_n = 0$;
in this case $\gr_\cF H \simeq H$ as vector spaces.

\begin{exa}\label{rem:powers}
For any Hopf ideal $J$ of a Hopf algebra $H$, the 
family $(J^i)_{i \in \ndN_0}$ of powers of $J$ (where $J^0 = H$)
is a decreasing Hopf algebra filtration of $H$. 
If $\dim H < \infty$ and $J$ is nilpotent, then this filtration is separated.
\end{exa}

We give now a basic result on graded Hopf algebras with a further filtration defined
by an ideal of the homogeneous component of degree 0.

\begin{pro}\label{pro:A}
Let $A=\bigoplus_{i \in \ndN_0} A(i)$ be a graded Hopf algebra,
$\rho$ the canonical right $A(0)$-coaction,
and $V = {A(1)}^{\co \rho}$.
Let $J_0$ be a Hopf ideal of the Hopf algebra $A(0)$,  and let $B = A(0) / J_0$,
\begin{align*}
J &= J_0\oplus \bigoplus_{i\geq 1} A(i),& 
 \cF &= (J^i)_{i \in \ndN_0}.
\end{align*}

\begin{enumerate}[leftmargin=*,,label=\rm{(\alph*)}]
\item\label{item:A-filtration}
$J$ is a graded Hopf ideal of $A$, $\cF$ is a decreasing Hopf algebra filtration,
and $\gr_\cF A = \bigoplus_{i \in \ndN_0} \gr^i_\cF A$ is a right $B$-comodule via the map $\overline{\rho}$ defined as 
in \eqref{eq:def-rho} with respect to the projection 
$\overline{\pi}:\gr_\cF A \twoheadrightarrow B$.
Thus 
\begin{align*} \gr_\cF A\simeq (\gr_\cF A)^{\co \overline{\rho}}\# B; \end{align*}
here $(\gr_\cF A)^{\co \overline{\rho}} = \cR = \bigoplus_{i \in \ndN_0} \cR(i)$
is a graded Hopf algebra in $\yd{B}$.

\medbreak
\item\label{item:A-gen-deg1-0} Assume that $A$ is generated by $A(0)\oplus A(1)$. Let 
\[
\ad J_0 (V) =\lspan_{\fie}\{x_{(1)}v\Ss(x_{(2)})\mid x\in J_0,v\in V\}.
\]
Then 
$J/J^2=\cR(1)B$, and 
there is an isomorphism in $\yd{B}$
\begin{align} \label{eq:prop-A-isom}
\cR(1) \simeq (J_0/J_0^2)^{\co \overline{\rho}}\, \oplus \, V/\ad J_0(V).
\end{align}

\medbreak
\item\label{item:A-gen-deg1}
Assume that $A$ is generated by $A(0)\oplus A(1)$. Then
$\gr_\cF A$ is generated as an algebra by 
$\gr_\cF^0 A \oplus \gr_\cF^1 A = B \oplus \cR(1)B$, and $\cR$ is generated as an algebra by $\cR(1)$.
\end{enumerate}
\end{pro}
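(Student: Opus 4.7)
I would verify $J=J_0\oplus \bigoplus_{i\ge 1}A(i)$ is a graded Hopf ideal by a direct check: it is a graded subspace; an ideal since $J_0\lhd A(0)$ and $\bigoplus_{i\ge 1}A(i)$ is the graded augmentation ideal of $A$; a coideal because $\Delta (J_0)\subseteq J_0\ot A(0)+A(0)\ot J_0\subseteq J\ot A+A\ot J$ (from $J_0$ being a coideal of $A(0)$) and, for $i\ge 1$, every term of $\Delta (A(i))\subseteq \sum_j A(j)\ot A(i-j)$ has a cofactor of positive degree, hence lying in $J$; antipode stability follows from $\Ss (J_0)\subseteq J_0$ and $\Ss (A(i))=A(i)$, and $\varepsilon (J)=0$. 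Example~\ref{rem:powers} then identifies $\cF =(J^i)_{i\in \ndN_0}$ as a decreasing Hopf algebra filtration. Since $A/J=A(0)/J_0=B$, the graded-bosonization theory recalled in Subsection~\ref{subsection:graded-bosonization} applies to $\gr_\cF A$ with its Hopf projection $\overline{\pi}$ onto $\gr_\cF^0 A=B$, giving $\gr_\cF A\simeq \cR\# B$ with $\cR=(\gr_\cF A)^{\co \overline{\rho}}$ a graded Hopf algebra in $\yd{B}$.

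\textbf{Part (b), decomposition.} I would observe that $J^2$ is homogeneous for the grading inherited from $A$, with $J^2\cap A(0)=J_0^2$; the hypothesis that $A$ is generated by $A(0)\oplus A(1)$ forces $A(i)\subseteq J^2$ for every $i\ge 2$ (any generating word of degree $\ge 2$ uses at least two factors from $A(1)\subseteq J$). Hence
\begin{align*}
J/J^2\simeq (J_0/J_0^2)\;\oplus\; A(1)/(J_0 A(1)+A(1)J_0)
\end{align*}
as graded vector spaces. Each summand is a right $B$-subcomodule of $J/J^2$ under $\overline{\rho}$: for $a_0\in J_0$, $\Delta (a_0)\in J_0\ot A(0)+A(0)\ot J_0$, and the second piece is killed by $\overline{\pi}$ on the right factor; for $a\in A(1)$, $\Delta (a)\in A(0)\ot A(1)+A(1)\ot A(0)$, and only the second piece survives $\overline{\pi}$. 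Therefore $\cR (1)=(J/J^2)^{\co \overline{\rho}}$ splits accordingly, with the first summand contributing $(J_0/J_0^2)^{\co \overline{\rho}}$ tautologically.

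\textbf{Part (b), second summand.} Using the bosonization $A\simeq \NA\# A(0)$ with $\NA=A^{\co \rho}$, I identify $A(1)\simeq V\ot A(0)$ via $v\ot a\mapsto v\cdot a$. Since $A(0)J_0=J_0$, this gives $A(1)J_0=V\ot J_0$ and $A(1)/A(1)J_0\simeq V\ot B$; writing $\Delta (va)=v_{(-1)}a_{(1)}\ot v_{(0)}a_{(2)}+va_{(1)}\ot a_{(2)}$ and noting that the first term is killed by $\overline{\pi}$ (as $v_{(0)}\in V\subseteq J$), the induced right $B$-coaction on $V\ot B$ is the free one $v\ot \overline{a}\mapsto v\ot \Delta_B(\overline{a})$. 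To further kill $J_0A(1)$, expand $xv=\sum (x_{(1)}\triangleright v)\cdot x_{(2)}$ for $x\in J_0$, $v\in V$ (where $\triangleright$ denotes the adjoint $A(0)$-action) and split $\Delta (x)\in J_0\ot A(0)+A(0)\ot J_0$: terms with $x_{(2)}\in J_0$ lie in $V\ot J_0=A(1)J_0$, while terms with $x_{(1)}\in J_0$ land in $\ad J_0(V)\ot B$; a recursion on the coaugmentation filtration of $J_0$ yields the matching opposite inclusion. Hence $A(1)/(J_0A(1)+A(1)J_0)\simeq (V/\ad J_0(V))\ot B$ as right $B$-comodules, with coinvariants $V/\ad J_0(V)$. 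This gives the isomorphism \eqref{eq:prop-A-isom}, and the equality $J/J^2=\cR (1)B$ is simply the degree-one instance of $\gr_\cF A\simeq \cR\# B$.

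\textbf{Part (c) and obstacle.} Since $\gr_\cF^i A=J^i/J^{i+1}$ is spanned by $i$-fold products of elements of $\gr_\cF^1 A=J/J^2$, the algebra $\gr_\cF A$ is generated by $\gr_\cF^0 A\oplus \gr_\cF^1 A=B\oplus \cR (1)B$. Let $\cR'\subseteq \cR$ be the subalgebra generated by $\cR (1)$; it is $B$-stable because $\cR (1)$ is an object of $\yd{B}$, so $\cR'\cdot B$ is a subalgebra of $\gr_\cF A=\cR\cdot B$ containing both $B$ and $\cR (1)$, hence containing the generating set $B\oplus \cR (1)B$; this forces $\cR'\cdot B=\cR\cdot B$, and by the uniqueness of the triangular decomposition $\cR'=\cR$. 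The hard part is the coinvariant calculation in the third paragraph --- identifying $A(1)/(J_0A(1)+A(1)J_0)$ with $(V/\ad J_0(V))\ot B$ as $B$-comodules and establishing the surjectivity onto $\ad J_0(V)\ot B$ via induction on the coaugmentation filtration of $J_0$ --- whereas (a) and (c) reduce to standard bosonization bookkeeping.
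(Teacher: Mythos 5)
Your proof is correct and follows essentially the same route as the paper: the same decomposition $J^2=J_0^2\oplus(J_0A(1)+A(1)J_0)\oplus\bigoplus_{i\ge 2}A(i)$, the same key identity $xv=x_{(1)}v\Ss(x_{(2)})\,x_{(3)}$ to trade $J_0A(1)$ for $\ad J_0(V)A(0)+A(1)J_0$, and the same identification of the $B$-coinvariants of $(V/\ad J_0(V))\ot B$. The only cosmetic difference is that no recursion on the coaugmentation filtration of $J_0$ is needed for the reverse inclusion: $\ad J_0(V)A(0)\subseteq J_0A(1)+A(1)J_0$ follows immediately from $\Delta(J_0)\subseteq J_0\ot A(0)+A(0)\ot J_0$ together with $\Ss(J_0)\subseteq J_0$.
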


\begin{proof}
\ref{item:A-filtration}: The first statement is clear because $J$ is the kernel of the composition $A \to A(0) \to B$ and the rest follows from the previous considerations.

\medbreak
\ref{item:A-gen-deg1-0}: 
Note first that $J/J^2=\cR(1)B$ by~\ref{item:A-filtration}.
Moreover, since $A$ is generated by $A(0)\oplus A(1)$, we conclude that
\begin{align}\label{eq:J^2}
J^2=J_0^2\oplus (J_0A(1)+A(1)J_0) \oplus 
\bigoplus_{i \ge 2} A(i).
\end{align}
Since the multiplication induces a bijection $V\ot A(0)\to A(1)$, we have
\begin{align*} J_0A(1)+A(1)J_0= J_0 V A(0) + V J_0. \end{align*}
Note that $xv=x_{(1)}v\Ss(x_{(2)})x_{(3)}$ for all $x\in A(0)$ and $v\in V$. Since $J_0$ is a Hopf ideal, it follows that $J_0V\subseteq \ad J_0(V)A(0)+A(1)J_0$.
Hence 
\begin{align*} J_0A(1)+A(1)J_0= \ad J_0(V) A(0) + V J_0. \end{align*}
We conclude that
\begin{align*}
J/J^2& \simeq \left(J_0\oplus V A(0)\right)/\big( J_0^2\oplus (\ad J_0 (V)) A(0) + VJ_0)\big)\\
& \simeq J_0/J_0^2\oplus VA(0)/(\ad J_0(V) A(0) + V J_0). 
\end{align*}
Now note that
$\ad J_0(V) \otimes A(0) + V \otimes J_0$ is the kernel of the canonical projection
$V\ot A(0)\to V/ \ad J_0(V) \ot B$ and that
\begin{align*} \big( V/\ad J_0(V) \ot B\big)^{\co \overline \rho}=V/\ad J_0(V) \in\yd{B}. \end{align*}
This implies~\eqref{eq:prop-A-isom}. 

\ref{item:A-gen-deg1}: By construction, 
$\gr_{\cF}A$ is generated by 
\[
A/J+J/J^2=\gr_{\cF}^0A\oplus \gr_{\cF}^1A.
\]
Hence \ref{item:A-gen-deg1} follows from \ref{item:A-gen-deg1-0}. 
\end{proof}

\subsection{Nichols algebras}\label{subsec:Nichols}

Let $H$ be a Hopf algebra, $V \in \yd{H}$, and 
$\cB = \bigoplus_{n \in \ndN_0} \cB(n)$ a graded Hopf algebra in $\yd{H}$;
assume that it is connected, i.e., $\cB(0) \simeq \fie$ and that $\cB(1) \simeq V$.
One says that 
\begin{itemize}[leftmargin=*]\renewcommand{\labelitemi}{$\circ$}
\item $\cB$ is a \emph{pre-Nichols algebra} of $V$ if it is generated as algebra by $\cB(1)$;

\medbreak
\item $\cB$ is a \emph{post-Nichols algebra} of $V$ if its subspace of primitive elements equals $\cB(1)$;

\medbreak
\item $\cB$ is a Nichols algebra of $V$ if it is both pre-Nichols and post-Nichols.
There exists a unique Nichols algebra of $V$ up to isomorphism, denoted by $\NA(V)$.
\end{itemize} 

There are alternative definitions of Nichols algebras;
here is one to be used later.
Let $M_n: \SG n \to \BG{n}$ be the Matsumoto section
of the standard group epimorphism from the braid group $\BG{n}$ to 
the symmetric group $\mathbb S_n$; 
in particular, 
 \[
 M_n(xy)=M_n(x)M_n(y)
 \]
for all $x,y\in\SG n$ such that $\ell(xy)=\ell(x)+\ell(y)$, where $\ell$ is the usual length of the Coxeter group $\SG n $. 
Define $\Omega_n \in \End \left(T^n(V)\right)$ and, in turn, the \emph{quantum symmetrizer} 
$\Omega \in \End\left(T(V)\right)$ by
\begin{align*}
\Omega_n &= \sum_{s \in \mathbb S_n} M_n(s), &\Omega &= \sum_{n \geq 2} \Omega_n.
\end{align*}
Then $\NA(V) \simeq T(V) / \ker \Omega$. This shows that 
$\NA (V)$ is uniquely determined as an algebra and 
coalgebra by the braided vector space $(V,c)$.
 
\subsection{Filtrations of Yetter-Drinfeld modules}\label{subsec:flags}
\emph{Reference: \cite[\S 3.4]{MR4298502}.}

Let $H$ be a Hopf algebra and $V \in \yd{H}$. An ascending filtration 
\[
0 =V_0 \subsetneq V_1 \dots \subsetneq V_d = V
\]
by subobjects in $\yd{H}$ of $V$
will be called a \emph{flag} of $V$; a flag is \emph{full} if the subfactors $V_{i} / V_{i-1}$ are simple.
The category of increasingly filtered objects is a braided tensor one, 
hence filtered Hopf algebras in $\yd{H}$ make sense.
Given a filtered $V \in \yd{H}$, its associated graded vector space 
$\gr V = \bigoplus_{i} V_{i} / V_{i-1}$ is a graded object in $\yd{H}$.
So, if $\cB$ is a filtered Hopf algebra in $\yd{H}$, then $\gr \cB$ is a 
graded Hopf algebra in $\yd{H}$. Also, if $V \in \yd{H}$ is filtered,
then any pre-Nichols algebra of $V$ in $\yd{H}$ is a filtered graded Hopf algebra.

\medbreak
The following result follows easily from \cite[Lemma 3.7]{MR4298502}.

\begin{lem}\label{lem:filtered}
Let $0 =V_0 \subsetneq V_1 \dots \subsetneq V_d = V\in \yd{H}$ 
be a flag of finite-dimensional Yetter-Drinfeld modules and let $\cB$ be
a pre-Nichols algebra of $V$. Then $\gr \cB$ is a
pre-Nichols algebra of $\gr V$ and $\dim \NA(\gr V) \leq \dim \NA(V)$. \qed
\end{lem}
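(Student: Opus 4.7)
The plan is to transport the flag on $V$ to a filtration on any pre-Nichols algebra of $V$, and then exploit the fact that every pre-Nichols algebra of a braided vector space surjects onto its Nichols algebra.

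First, I would equip the tensor algebra $T(V)$ with the filtration whose $k$-th piece is $\sum V_{i_1}\otimes\cdots\otimes V_{i_n}$ with $i_1+\cdots+i_n\le k$; since each $V_i$ is a Yetter-Drinfeld submodule and $\Delta(v)=v\otimes 1+1\otimes v$ for $v\in V$, this yields a filtration of $T(V)$ as a graded Hopf algebra in $\yd{H}$, and the associated graded identifies with $T(\gr V)$. This is the braided statement recorded in \cite[Lemma 3.7]{MR4298502}.

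Given a pre-Nichols algebra $\cB$ of $V$, write $\cB=T(V)/I$ for a homogeneous (with respect to the tensor-length grading) coideal $I\subseteq \bigoplus_{n\ge 2}T^n(V)$, and endow $\cB$ with the quotient filtration. Then $\gr \cB$ is the quotient of $T(\gr V)$ by the homogeneous coideal $\gr I\subseteq \bigoplus_{n\ge 2}T^n(\gr V)$; it is a connected $\ndN_0$-graded Hopf algebra in $\yd{H}$ generated by its degree-one component $\gr V$. Hence $\gr \cB$ is a pre-Nichols algebra of $\gr V$, proving the first assertion.

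For the dimension inequality, apply the construction to $\cB=\NA(V)$. Finite-dimensionality of the $V_i$'s forces each $\cB(n)$ to be finite-dimensional, and on a finite-dimensional filtered vector space the associated graded has the same dimension, so $\dim \NA(V)=\dim \gr \NA(V)$. By the universal property of the Nichols algebra, the pre-Nichols algebra $\gr \NA(V)$ of $\gr V$ surjects onto $\NA(\gr V)$, yielding $\dim \NA(\gr V)\le \dim \gr \NA(V)=\dim \NA(V)$. The only real subtlety is the very first step: verifying that the filtration on $T(V)$ is compatible with the braided coproduct so that $\gr T(V)\simeq T(\gr V)$ as Hopf algebras in $\yd{H}$ rather than merely as algebras. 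This compatibility is exactly what the cited lemma supplies, which is why the present claim follows easily.
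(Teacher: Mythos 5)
Your proposal is correct and follows essentially the same route as the paper, which simply asserts that the lemma ``follows easily from \cite[Lemma 3.7]{MR4298502}'' --- precisely the compatibility of the flag-induced filtration on $T(V)$ with the braided Hopf structure that you invoke, after which the quotient filtration on a pre-Nichols algebra and the universal surjection $\gr \NA(V)\twoheadrightarrow\NA(\gr V)$ give both assertions. No discrepancy to report.
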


The following two results are valid in any characteristic.

\begin{pro}
\label{pro:consequence}
Let $G$ be a non-abelian group and $V, W \in \yd{\fie G}$ 
absolutely simple. Assume that $\supp(V\oplus W)$ generates $G$ and 
$c_{W,V}c_{V,W}\ne\id$.
If, in addition,
$\dim\NA(V\oplus W)<\infty$, then $\min\{\dim V,\dim W\}\leq 2$ and 
$\max\{\dim V,\dim W\}\leq 4$. 
\end{pro}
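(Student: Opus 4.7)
The plan is to reduce the statement to the classification of finite-dimensional Nichols algebras of semisimple but non-simple Yetter-Drinfeld modules over finite non-abelian groups, due to Heckenberger and Vendramin \cite{MR3605018, MR3656477}. Setting $M = V \oplus W$, the hypotheses place the pair $(V, W)$ exactly in the setting of those papers: $M \in \yd{\fie G}$ is a direct sum of two absolutely simple summands with non-trivial braiding between them (equivalently, the generalized Dynkin diagram of $M$ at the base point is connected of rank $2$), $\supp M$ generates the non-abelian group $G$, and $\dim \NA(M) < \infty$. Consequently, $(V, W)$ belongs, up to isomorphism, twist equivalence, and the action of the Weyl groupoid of the root system attached to $M$, to the explicit finite list provided by \cite{MR3656477}.

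I would then verify the numerical bounds by direct inspection of that list. For each entry, the dimensions of the two simple summands are recorded (in terms of the underlying conjugacy class and the irreducible representation of its centralizer); in every case one of $\dim V, \dim W$ equals $1$ or $2$, and the other is at most $4$. This is precisely the claimed conclusion $\min\{\dim V,\dim W\}\leq 2$ and $\max\{\dim V,\dim W\}\leq 4$.

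The main obstacle is the case-by-case comparison with the classification tables, rather than any slick abstract argument for the specific numbers $2$ and $4$. The bound $\max\{\dim V,\dim W\}\leq 4$ manifests itself in the examples attached to racks of small size (most notably those associated with $\tetra$, $\Oc^4_2$, and $\Oc^4_4$, cf.\ Subsection~\ref{subsec:main-examples}), whereas $\min\{\dim V,\dim W\}\leq 2$ reflects the fact that if both summands had dimension $\geq 3$, then the constraints on centralizers imposed by a connected rank-$2$ root system with non-trivial off-diagonal Cartan entry would force $\NA(M)$ to be infinite-dimensional, contradicting the hypothesis. A minor additional point to handle is the requirement of absolute simplicity: after passing to an algebraic closure if necessary, the classification applies verbatim and the dimension bounds are preserved.
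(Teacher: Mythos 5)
Your proposal matches the paper's proof, which is exactly a one-line appeal to the classification of finite-dimensional Nichols algebras of direct sums of two absolutely simple Yetter--Drinfeld modules over non-abelian groups (Theorem~2.1 and Table~1 of \cite{MR3656477}), followed by inspection of the dimensions occurring in that table. The only difference is that you spell out the reduction and the inspection in more detail than the paper does.
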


\begin{proof}
The claim follows by inspection of \cite[Theorem 2.1 and Table 1]{MR3656477}.
\end{proof}

\begin{cor}\label{cor:absolutely_irreducible}
Let $G$ be a non-abelian group and $V \in \yd{\fie G}$. Assume that
$\supp V$ is a conjugacy class of $G$ that generates $G$
and $\dim \NA(V)<\infty $. 
Then $V$ is absolutely simple.
\end{cor}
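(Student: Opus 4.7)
The plan is to argue by contradiction, reducing to Proposition~\ref{pro:consequence}. Suppose $V$ is not absolutely simple; let $\overline{\fie}$ be an algebraic closure of $\fie$ and set $\overline V=V\otimes_{\fie}\overline{\fie}\in\yd{\overline{\fie}G}$. Then $\overline V$ has finite dimension and is not simple, so it admits a composition series in $\yd{\overline{\fie}G}$ of length $m\ge 2$ whose subquotients $W_1,\dots,W_m$ are absolutely simple. Lemma~\ref{lem:filtered} gives $\dim\NA\bigl(\bigoplus_i W_i\bigr)\le\dim\NA(\overline V)=\dim\NA(V)<\infty$, and in particular $\dim\NA(W_i\oplus W_j)<\infty$ for any $i\ne j$ since $W_i\oplus W_j$ is a braided subspace.

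I would first verify the support hypothesis of Proposition~\ref{pro:consequence} for a pair $(W_i,W_j)$. Each $\supp W_i$ is a non-empty union of $G$-conjugacy classes contained in the single conjugacy class $\supp V$, so $\supp W_i=\supp V$; hence $\supp(W_i\oplus W_j)=\supp V$ generates $G$.

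The crucial step is to rule out $c_{W_j,W_i}c_{W_i,W_j}=\id$. For nonzero homogeneous $v\in(W_i)_h$ and $w\in(W_j)_k$, the formula $c(a\otimes b)=a_{(-1)}\cdot b\otimes a_{(0)}$ yields
\[
c^2(v\otimes w)=(hkh^{-1})\cdot v\otimes h\cdot w\in (W_i)_{hkhk^{-1}h^{-1}}\otimes (W_j)_{hkh^{-1}}.
\]
Matching this against $v\otimes w\in(W_i)_h\otimes(W_j)_k$ forces $hk=kh$. If $c^2=\id$ on all of $W_i\otimes W_j$, then this would hold for all $h,k\in\supp V$, making $\supp V$ a set of pairwise commuting elements. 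Since $\supp V$ generates $G$, this would contradict the non-abelianness of $G$.

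Hence Proposition~\ref{pro:consequence} applies and yields $\min(\dim W_i,\dim W_j)\le 2$. Write $\Oc=\supp V$ and, for a fixed $g\in\Oc$, realize $W_i\simeq M(g,\chi^{(i)})$ for an absolutely irreducible $\overline{\fie}$-representation $\chi^{(i)}$ of $C_G(g)$. Then $\dim W_i=|\Oc|\cdot\dim\chi^{(i)}\ge|\Oc|$, so $|\Oc|\le 2$. The case $|\Oc|=1$ puts $g\in Z(G)$, so $G=\langle g\rangle$ is abelian, a contradiction. In the case $|\Oc|=2$, write $\Oc=\{g,g'\}$: conjugation by $g$ fixes $g$ and must also fix the only remaining element $g'$, so $gg'=g'g$ and $G=\langle g,g'\rangle$ is abelian, again a contradiction.

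The main obstacle I anticipate is pinning down the grading computation for $c^2$ correctly; once that is done, it simultaneously dispatches the commuting case and, combined with the dimension bound from the proposition, reduces $|\Oc|$ to a range already excluded by the non-abelianness of $G$. A minor point to verify is that the composition series truly delivers absolutely simple subquotients, which is immediate because we are working over the algebraically closed field $\overline{\fie}$ and $\dim\overline V<\infty$.
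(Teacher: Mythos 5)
Your proof is correct and follows essentially the same route as the paper: pass to a scalar extension, semisimplify via a composition series together with Lemma~\ref{lem:filtered}, and contradict the dimension bound of Proposition~\ref{pro:consequence}. The only differences are cosmetic --- you use the algebraic closure where the paper takes a finite extension of maximal composition length, and you spell out the verifications that $c^2\neq\id$ and that $|\supp V|\ge 3$, which the paper leaves implicit.
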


\begin{proof}
Let $X=\supp V$.  Since $X$ is a conjugacy class of $G$ which generates $G$ and since $G$ is non-abelian, 
the rack $X$ is indecomposable and $|X| \geq 3$.
Moreover, $|X|\le \dim V\le \dim \NA (V)<\infty $.
Given a finite field extension $E \vert \fie $, the composition length of the Yetter-Drinfeld module $E\ot_\fie  V$ is bounded by $\dim_\fie  V$.
Let $L \vert \fie $ be a finite field extension such that
the composition length $r$ of $L\ot_\fie  V$ is maximal.
Assume that $V$ is not absolutely simple. Then $r\ge 2$.
Let
\begin{equation}
\label{eq:CS}
0=Y_0\subsetneq Y_1\subsetneq \cdots \subsetneq Y_r=L\ot _\fie  V \end{equation}
be a composition series of $L\ot_\fie  V$. Since $X$ is indecomposable, we conclude that $\supp Y_i/Y_{i-1} =X$ for all $1\le i\le r$. The composition series \eqref{eq:CS} defines a filtration on $L\ot_\fie  V$. Let $W=\gr (L\ot_\fie  V)$ be the associated graded Yetter-Drinfeld module.
Then
\begin{align*} W=\bigoplus_{i=1}^r Y_i/Y_{i-1} \end{align*}
is a direct sum of $r$ absolutely simple Yetter-Drinfeld modules over $LG$, each of them having support $X$. In particular, $W$ is braid-indecomposable. By Lemma~\ref{lem:filtered},
\begin{align*} \dim \NA (W)\le \dim \NA (V)<\infty, \end{align*}
contradicting 
to Proposition~\ref{pro:consequence}. 
Therefore, $V$ is absolutely simple.
\end{proof}

Next, we discuss an application of Proposition~\ref{pro:psistar} and Corollary~\ref{cor:absolutely_irreducible}.

\begin{pro}
\label{cor:phcenter}
Let $G$ be a non-abelian group and $V \in \yd{\fie G}$. Let
$N \lhd G$ be contained in the kernel of the representation of $G$ on $V$; let $\pi:G\to G/N$ be the canonical map. Then $V$ and $\pi_*(V)$ have the same braiding. Moreover, if $\supp V$ is a conjugacy class of $G$ generating $G$ and if $\dim\NA (V)<\infty$, 
then $\pi_*(V)$ is absolutely simple.
\end{pro}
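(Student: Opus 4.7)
My plan is to treat the two assertions separately. The first---that $V$ and $\pi_*(V)$ have the same braiding---is essentially a book-keeping check using Propositions~\ref{pro:functors} and~\ref{pro:psistar}, while the second---absolute simplicity---is a reduction to Corollary~\ref{cor:absolutely_irreducible} applied to the pair $(G/N,\pi_*(V))$.

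For the first step I identify the underlying vector space. By Proposition~\ref{pro:psistar}, $\pi_*(V)\simeq V/(J_N\cdot V)$, and since $N$ lies in the kernel of the representation of $G$ on $V$ one has $(n-1)\cdot v=0$ for all $n\in N$, $v\in V$. Hence $J_N\cdot V=0$ and $\pi_*(V)=V$ as a vector space. Proposition~\ref{pro:functors} then describes the $\fie(G/N)$-action as the one induced from the $\fie G$-action (which factors through $\pi$ precisely because $N$ acts trivially) and the $\fie(G/N)$-coaction as $v\mapsto\pi(v_{(-1)})\otimes v_{(0)}$. Comparing
\[
c_{\pi_*(V)}(v\otimes w)=\pi(v_{(-1)})\cdot w\otimes v_{(0)}
\quad\text{with}\quad
c_V(v\otimes w)=v_{(-1)}\cdot w\otimes v_{(0)},
\]
the two right-hand sides coincide because $\pi(g)$ and $g$ act identically on $V$ for every $g\in G$.

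For the second step I check that $(G/N,\pi_*(V))$ fulfils the hypotheses of Corollary~\ref{cor:absolutely_irreducible}. The group $G/N$ is non-abelian: the proof of Lemma~\ref{lem:quotient_YD} uses only that $N$ acts trivially on $\supp V$ (not that $N$ coincides with the full kernel of the representation), so it applies verbatim. The support of $\pi_*(V)$ is $\pi(\supp V)$, the surjective image of a single conjugacy class, hence a conjugacy class of $G/N$; it generates $G/N$ because $\supp V$ generates $G$. Finally, since $V$ and $\pi_*(V)$ share the same underlying braided vector space by the first part, their Nichols algebras are isomorphic as braided graded algebras, so $\dim\NA(\pi_*(V))=\dim\NA(V)<\infty$. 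Corollary~\ref{cor:absolutely_irreducible} then yields that $\pi_*(V)$ is absolutely simple.

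The only mildly subtle point in this plan is the observation that Lemma~\ref{lem:quotient_YD} may be invoked for any normal subgroup acting trivially rather than only for the exact kernel of the representation; this is evident from its proof. Beyond that, the statement is a formal consequence of the compatibility of $\pi_*$ with the braided vector space structure together with the absolute-simplicity criterion already established in Corollary~\ref{cor:absolutely_irreducible}, so I do not expect any serious obstacle.
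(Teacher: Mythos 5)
Your proof is correct and follows essentially the same route as the paper: identify $\pi_*(V)$ with $V$ as a braided vector space (so the Nichols algebras coincide), check that $\supp\pi_*(V)$ is a generating conjugacy class of the non-abelian quotient $G/N$ via Lemma~\ref{lem:quotient_YD}, and apply Corollary~\ref{cor:absolutely_irreducible}. Your remark that Lemma~\ref{lem:quotient_YD} applies to any normal subgroup acting trivially (not just the full kernel) is a legitimate and careful observation; alternatively one can simply note that $G/N$ surjects onto $G/\ker\rho$, which is non-abelian by the lemma as stated.
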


\begin{proof}
Since $N$ acts trivially on $V$, the Yetter-Drinfeld modules $V$ and
\begin{align*} \pi_*(V)= \fie (G/N)\ot_{\fie G} V \end{align*}
have the same braiding. In particular, $\dim \NA (\pi_*(V))=\dim \NA (V)$.

Assume that $\supp V$ is a conjugacy class of $G$ generating $G$ and that $\NA (V)$ is finite-dimensional. Then $\dim \NA (\pi_*(V))<\infty $ and $\supp \pi_*(V)$ is a conjugacy class of $G/N$ generating $G/N$. Moreover, $G/N$ is non-abelian by Lemma~\ref{lem:quotient_YD}.
Thus, by Corollary~\ref{cor:absolutely_irreducible}, $\pi_*(V)$ is absolutely simple.
\end{proof}

\section{Nichols algebras in prime characteristic}\label{sec:prime-char}
Throughout this section, $G$ denotes a finite group, $p$ is a prime number
and the base field $\fie$ has characteristic $p$, except in Lemma \ref{lem:CurtisReiner69.9}. We collect preliminaries on Nichols algebras under these assumptions.

\subsection{Nichols algebras of decomposable modules}\label{subsec:nichols-decomposable}
Here we summarize results from \cite{MR3605018,MR3625122,MR4099895,MR3313687} 
and state a few consequences thereof.

\begin{thm}\label{pro:diagtypewithcoinvariants}
Let $I$ be a finite set with a decomposition $I=I_1\cup I_2$ into two non-empty disjoint subsets.
Let $\vbd$ be a finite-dimensional braided vector space of diagonal type with braiding matrix 
$\bq = (q_{ij})_{i,j\in I}\in (\fiet)^{I\times I}$; let 
$(x_i)_{i\in I}$ be a basis of $\vbd$ such that
\begin{align*}
c (x_i\ot x_j) &= q_{ij}x_j\ot x_i& &\text{for all }i,j\in I.
\end{align*}
Assume that
\begin{enumerate}[leftmargin=*,label=\rm{(A\arabic*)}]
\item\label{A1} $q_{ij}=1$ for all $i,j\in I_1$,

\medbreak
\item\label{A2} $q_{ij}q_{ji}=q_{ik}q_{ki} \ne 1$ for all $i\in I_1$, $j,k\in I_2$, and

\medbreak
\item\label{A3} the polynomial $\prod_{i\in I_1} (t-q_{ij}q_{ji})$ is in
$\ndF_p[t]$ for all $j\in I_2$.
\end{enumerate}

\medbreak
\noindent Then $\dim\NA (\vbd) < \infty$ if and only if the Dynkin diagram of $\bq$ belongs to the following list:
\begin{enumerate}[leftmargin=*,label=\rm{(\Alph*)}]
\item\label{item:exaA} \Dchaintwo{$-1$}{$-1$}{$1$} where
$p =3$; 

\medbreak
\item\label{item:exaB} \Dchaintwo{$-1$}{$\zeta$}{$1$}
where $p =5$ and $\zeta\in\{2,3\}\subseteq\ndF_5$;

\medbreak
\item\label{item:exaC} 
\Dchaintwo{$-1$}{$\zeta$}{$1$}
where $p =7$ and $\zeta\in\{3,5\}\subseteq\ndF_7$;

\medbreak
\item\label{item:exaD} 
\Dchainthree{}{$1$}{$\zeta $}{$a$}{$\zeta ^{-1}$}{$1$} where
$p=2$, $\zeta ^2+\zeta+1=0$ and $a\in \{1,\zeta,\zeta^{-1}\}$. 
 \end{enumerate}
\end{thm}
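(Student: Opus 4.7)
The plan is to reduce both directions to the classification of finite-dimensional Nichols algebras of diagonal type in characteristic $p>0$ obtained in the cited references \cite{MR3605018,MR3625122,MR4099895,MR3313687}. That classification enumerates, up to twist and Weyl-groupoid equivalence, all generalized Dynkin diagrams whose Nichols algebra is finite-dimensional; conditions (A1)-(A3) will then act as combinatorial filters on that finite list.

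For the ``if'' direction, each of the four listed diagrams appears explicitly in the classification for the stated characteristic, so $\dim \NA(\vbd) < \infty$ in each case. A direct check confirms that (A1)-(A3) are satisfied for the partition $I = I_1 \cup I_2$, where $I_1$ consists of the vertices carrying label $1$; in particular, in case (D), the identity $(t-\zeta)(t-\zeta^{-1})=t^2+t+1\in \ndF_2[t]$ verifies the Galois-rationality required by (A3).

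For the ``only if'' direction, assume $\dim\NA(\vbd)<\infty$ and translate the hypotheses into constraints on the generalized Dynkin diagram of $\bq$. Condition (A1) forces $q_{ii}=1$ for every $i\in I_1$ and forbids edges within $I_1$. Condition (A2) requires that each $i\in I_1$ be joined to every $j\in I_2$ by a non-trivial edge whose label $q_{ij}q_{ji}$ depends only on $i$; consequently a complete bipartite subgraph $K_{|I_1|,|I_2|}$ of non-trivial edges sits inside the diagram, with uniform labels along each $I_1$-row. Condition (A3) becomes the Galois-rationality, over $\ndF_p$, of the multiset $\{q_{ij}q_{ji}\mid i\in I_1\}$. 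Inspecting the classification list, one verifies that these conditions exclude every diagram of rank $\geq 4$ (they would force forbidden complete bipartite subgraphs with uniform rows), and that at rank $3$ the only survivor is the chain (D) in characteristic $2$, while the rank-$2$ survivors with $|I_1|=|I_2|=1$ are exactly (A), (B) and (C).

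The main obstacle is the enumeration itself: the positive-characteristic classification list is large, and one must verify case by case that (A1)-(A3) fail outside the four advertised diagrams. Fortunately, the combination of (A1) (forcing many label-$1$ vertices forming an independent set), (A2) (complete bipartite uniformity between $I_1$ and $I_2$) and (A3) (Galois rationality of the edge labels) is restrictive enough that most diagrams in the classification are discarded at a glance, leaving only a short finite check to isolate the four cases (A)-(D).
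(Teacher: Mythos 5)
Your proposal is correct and follows essentially the same route as the paper: both directions are reduced to the positive-characteristic classification of finite-dimensional diagonal Nichols algebras in the cited references, with (A1)--(A3) used as combinatorial filters (the paper additionally dispatches the cases $p=2$ with $|I_1|=|I_2|=1$ and $p=2$ with $|I_1|=1$, $|I_2|=2$ by a direct clash between (A2) and (A3), and handles rank $\ge 4$ by restricting to a $4$-dimensional braided subspace before consulting the rank-$4$ tables). No essential difference in strategy.
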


Note that the braiding matrices of the examples in \ref{item:exaA}, \ref{item:exaB} and 
\ref{item:exaC} can be characterized by $|I_1|=|I_2|=1$, $p\in \{3,5,7\}$, $q_{ii}=1$, $q_{jj}=-1$, and $q_{ij}q_{ji}$ is a primitive root of the residue class ring $\ndZ/p\ndZ$ for $i\in I_1$, $j\in I_2$.

\begin{proof}
The Nichols algebras corresponding to the Dynkin diagrams in the list 
have finite dimension
by \cite[Theorem 5.1]{MR3313687} and \cite[Corollary 3.3]{MR3625122};
see row $6^{\prime\prime\prime}$ in Table 5.2, 
row $15^{\prime}$ of Table~5.3, row $18$ of Table~5.4 in 
\cite{MR3313687};
and rows $8$ and $15$ of Table 1 in \cite{MR3625122}. 

\medbreak
Suppose now that $\dim\NA(\vbd)<\infty$.

\medbreak
Assume first that $|I_1|=|I_2|=1$.

\medbreak
If $p=2$, 
then $q_{ij}q_{ji}\not\in\{0,1\}$ for $i\in I_1$, $j\in I_2$ by \ref{A2}, contradicting \ref{A3}.

\medbreak

If $p=3$, then Theorem~5.1 and Table~5.2 of \cite{MR3313687} 
imply that $\bq$ has Dynkin diagram
\begin{center} 
\Dchaintwo{$-1$}{$-1$}{$1$}.
\end{center}

If $p=5$, then Theorem~5.1 and Table~5.3 of \cite{MR3313687} 
imply that $\bq$ has Dynkin diagram
\begin{center} 
\Dchaintwo{$-1$}{$\zeta $}{$1$} \qquad where $\zeta \in \{2,3\}\subseteq \ndF_5$.
\end{center}

If $p=7$, then Theorem 5.1 and Table 5.4 of \cite{MR3313687} 
imply that $\bq$ has Dynkin diagram
\begin{center} 
\Dchaintwo{$-1$}{$\zeta $}{$1$}\qquad
where $\zeta \in \{3,5\}\subseteq \ndF_7$.
\end{center}

\medbreak
According to Theorem~5.1 and Table~5.5 of \cite{MR3313687}, if $p>7$
then
there are no braided vector spaces of diagonal type satisfying assumptions \ref{A1}, \ref{A2} and \ref{A3} and
having a finite-dimensional Nichols algebra.

\medbreak
Assume now that $|I_1|+|I_2|=3$. If $p=2$ and $|I_1|=2$,
then Corollary~3.3 and Table~1 of \cite{MR3625122} together with assumptions \ref{A1}, \ref{A2} and \ref{A3}
imply that the braiding matrix $\bq$ has Dynkin diagram
\begin{center} 
\Dchainthree{}{$1$}{$\zeta $}{$a$}{$\zeta ^{-1}$}{$1$} 
\end{center}
with $\zeta ^2+\zeta+1=0$ and $a\in \{1,\zeta ,\zeta^{-1}\}$.
On the other hand, if $p=2$ and $|I_1|=1$, then $q_{ij}q_{ji}=1$ for all $i\in I_1$, $j\in I_2$ because of \ref{A3}, contradicting \ref{A2}.

\medbreak
According to Corollary 3.3 and Tables 2 and 3 of \cite{MR3313687}, if $|I_1|+|I_2|=3$ and $p>2$
then
there are no braided vector spaces of diagonal type satisfying assumptions \ref{A1}, \ref{A2} and \ref{A3} and
having a finite-dimensional Nichols algebra.

\medbreak
Assume now that $|I_1|+|I_2|\ge 4$ and $\dim\NA(V)<\infty$.
Let $W$ be a braided subspace of $V$ of dimension $4$ corresponding to a subset of $I$ containing at least one element both from $I_1$ and from $I_2$.
Then $\NA (W)$ is finite-dimensional. However, 
by Corollary~3.2 and Table~1 of \cite{MR4099895} at least one of \ref{A1} and \ref{A2} is not true, resulting in a contradiction.
Hence \ref{A1} and \ref{A2} cannot hold simultaneously for $V$.
\end{proof}

We record the following well-known fact for further use.

\begin{lem}\label{lem:CurtisReiner69.9} \cite[Corollary (69.9)]{MR2215618}
Let $G$ be a finite group, let $\Fie$ be any field, let $M$ be a completely reducible $\Fie 
G$-module and let $L\vert \Fie$ be a finite separable field extension. 
Then $L\ot _{\Fie} M$ is a completely reducible $L G$-module. \qed
\end{lem}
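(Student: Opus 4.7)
The strategy is to translate complete reducibility of $L\otimes_\Fie M$ over $LG$ into semisimplicity of a finite-dimensional $L$-algebra, and use separability of $L/\Fie$ to obtain the latter. Since complete reducibility is preserved by direct sums, I would first reduce to the case that $M$ is simple. Let $B\subseteq\End_\Fie(M)$ be the image of $\Fie G$ under the representation on $M$. Then $B$ is a finite-dimensional semisimple $\Fie$-algebra: $\Rad(\Fie G)$ annihilates the semisimple module $M$, so $B$ is a homomorphic image of the semisimple algebra $\Fie G/\Rad(\Fie G)$. The $LG$-action on $L\otimes_\Fie M$ factors through the surjection $LG\twoheadrightarrow L\otimes_\Fie B$, so $LG$-submodules of $L\otimes_\Fie M$ coincide with $(L\otimes_\Fie B)$-submodules; thus it suffices to show that $L\otimes_\Fie B$ is a semisimple $L$-algebra.

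By Artin--Wedderburn, $B\simeq\prod_i M_{n_i}(D_i)$ with each $D_i$ a finite-dimensional $\Fie$-division algebra, and since matrix algebras are compatible with base change and with semisimplicity, one reduces to showing that $L\otimes_\Fie D$ is semisimple for each finite-dimensional $\Fie$-division algebra $D$. Letting $K=Z(D)$, I would write $L\otimes_\Fie D\simeq(L\otimes_\Fie K)\otimes_K D$. The pivotal observation is that $L\otimes_\Fie K$ is reduced; being a finite-dimensional commutative $L$-algebra, it then splits as a product of finite field extensions $L_{(j)}$ of $L$, and $L\otimes_\Fie D\simeq\prod_j L_{(j)}\otimes_K D$ becomes a product of central simple $L_{(j)}$-algebras, hence semisimple.

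The main obstacle is the remaining claim: for any finite field extension $K/\Fie$, the algebra $L\otimes_\Fie K$ is reduced, using only separability of $L/\Fie$. I would decompose $K=K_s(\alpha_1,\dots,\alpha_r)$, where $K_s/\Fie$ is the maximal separable subextension and each $\alpha_i$ is purely inseparable over $K_s$; the base change $L\otimes_\Fie K_s$ is a product of separable field extensions of $L$ (separable tensor separable is étale), which reduces the problem to the purely inseparable case. There one checks that if $\alpha^{p^n}\in K_s$ with $n$ minimal, then separability of $L/\Fie$ prevents $\alpha^{p^n}$ from acquiring a $p$-th root in $L$ that is not already in $K_s$, so the polynomial $x^{p^n}-\alpha^{p^n}$ remains irreducible over $L$ and the corresponding tensor factor is a purely inseparable field extension of $L$. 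Iterating over the generators $\alpha_i$ yields that $L\otimes_\Fie K$ is a product of fields, completing the argument; this is precisely the content of the Curtis--Reiner reference.
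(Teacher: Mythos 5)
Your proof is correct, and since the paper simply cites Curtis--Reiner for this lemma without reproducing an argument, there is no in-paper proof to compare against; your reduction chain (simple $M$ $\Rightarrow$ semisimplicity of the image $B$ of $\Fie G$ in $\End_\Fie(M)$ $\Rightarrow$ Artin--Wedderburn $\Rightarrow$ semisimplicity of $L\ot_\Fie D$ via $(L\ot_\Fie K)\ot_K D$ with $K=Z(D)$) is exactly the standard route. Two remarks. First, your final step is more laborious than it needs to be: rather than decomposing $K$ into its separable and purely inseparable parts over $\Fie$, it is enough to apply the primitive element theorem to the \emph{separable} extension $L=\Fie[x]/(f)$, so that $L\ot_\Fie K\simeq K[x]/(f)$; since $\gcd(f,f')=1$ persists over $K$, the polynomial $f$ is squarefree in $K[x]$ and the quotient is a product of fields in one line. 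Second, in your inseparability argument the assertion that $x^{p^n}-\alpha^{p^n}$ ``remains irreducible over $L$'' is imprecise: the relevant base is a factor $M_j$ of $L\ot_\Fie K_s$ (a compositum of $L$ and $K_s$), and what one actually uses is that $M_j/K_s$ is separable --- because $M_j$ is generated over $K_s$ by images of elements of $L$, each separable over $\Fie$ --- so that a $p$-th root of $\alpha^{p^n}$ in $M_j$ would be simultaneously separable and purely inseparable over $K_s$, hence already in $K_s$. With that correction, and the routine bookkeeping needed to iterate over several purely inseparable generators, your argument goes through; but the primitive-element shortcut renders all of it unnecessary.
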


\begin{cor}
\label{cor:examples_abeliantype}
Let $G$ be a finite abelian group and
$V, W \in \yd{\fie G}$. Assume that 
\begin{enumerate}[leftmargin=*,label=\rm{(\alph*)}]
\item\label{item:examples_abeliantype} $V$ is absolutely simple and 
$\supp V$ generates $G$;

\medbreak
\item 
$\supp W = \{e\}$, the trivial  $\fie G$-module is not a submodule of $W$, and 
there exists a simple $\ndF_p G$-module $\widehat{W}$  such that
\begin{align*} W\simeq  \fie \ot_{\ndF_p}\widehat{W}. \end{align*}
\end{enumerate}
Then $G$ is cyclic. Moreover, 
$\dim\NA(V\oplus W)<\infty$ 
if and only if either of the following holds:
\begin{itemize}[leftmargin=*]\renewcommand{\labelitemi}{$\circ$}
\item $\dim V=\dim W=1$ and $V\oplus W$ is of diagonal type
as in conditions \ref{item:exaA}, \ref{item:exaB} or \ref{item:exaC} 
in  Theorem~\ref{pro:diagtypewithcoinvariants}, where 
the vertex with label $1$ corresponds to $W$;

\medbreak
\item $\dim V=1$, $\dim W=2$ 
and there exists a finite field extension $L \vert \fie$
such that $L \otimes_{\fie} (V \oplus W)$ is of diagonal type
as in condition \ref{item:exaD} 
in  Theorem~\ref{pro:diagtypewithcoinvariants}, where 
the vertices with label $1$ correspond to basis vectors of $W$.
\end{itemize}
\end{cor}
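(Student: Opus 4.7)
The plan is to show that after extending scalars to a splitting field, $V\oplus W$ becomes a braided vector space of diagonal type satisfying hypotheses \ref{A1}--\ref{A3} of Theorem~\ref{pro:diagtypewithcoinvariants}, and then to read off the classification directly from that result. The first step is to observe that, since $V$ is absolutely simple and $G$ is abelian, passing to the algebraic closure shows that $V$ must be one-dimensional; its support is then a singleton $\{g\}$, and the hypothesis that $\supp V$ generates $G$ forces $G=\langle g\rangle$ cyclic. Write $V=\fie v$ with $g\cdot v=qv$ for some $q\in\fiet$.

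Next I would decompose $W$. Since $\widehat W$ is simple over $\ndF_pG$ and $\fie/\ndF_p$ is separable, Lemma~\ref{lem:CurtisReiner69.9} gives that $W$ is completely reducible over $\fie G$. Choosing a finite extension $L/\fie$ that splits $W$, I obtain $L\otimes_\fie W=\bigoplus_{i=1}^{m}Lw_i$ with $g\cdot w_i=\zeta_i w_i$; simplicity of $\widehat W$ as an $\ndF_pG$-module implies that the $\zeta_i$ form a single Galois orbit over $\ndF_p$, and the hypothesis that the trivial module is not a submodule of $W$ (equivalently, $\widehat W$ is non-trivial) forces $\zeta_i\neq 1$ for all $i$.

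Finally, because $v\in V_g$ and each $w_i\in W_e$, the braiding on $L\otimes_\fie(V\oplus W)\in\yd{LG}$ is diagonal in the basis $(w_1,\dots,w_m,v)$, with $q_{ii'}=1$ for all $i,i'\in I_1:=\{1,\dots,m\}$, $q_{vv}=q$, $q_{iv}=1$ and $q_{vi}=\zeta_i$. Setting $I_2=\{v\}$, assumption \ref{A1} is immediate, \ref{A2} reduces to $\zeta_i\neq 1$ because $|I_2|=1$, and \ref{A3} holds since $\prod_{i\in I_1}(t-\zeta_i)$ is the minimal polynomial of $\zeta_1$ over $\ndF_p$. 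As $\dim\NA(V\oplus W)=\dim\NA(L\otimes_\fie(V\oplus W))$, Theorem~\ref{pro:diagtypewithcoinvariants} then forces the Dynkin diagram to be one of \ref{item:exaA}--\ref{item:exaD}: the first three have $|I_1|=1$, giving $\dim V=\dim W=1$ and the first bullet of the statement (with $q=-1$); the last has $|I_1|=2$, producing $\dim V=1$, $\dim W=2$ and the second bullet. The one subtle point is the verification of \ref{A3}, where the rational structure $\widehat W$ is essential: without it the eigenvalues $\zeta_i$ need not be Galois conjugates over $\ndF_p$ and the theorem could not be applied.
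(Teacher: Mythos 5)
Your proposal is correct and follows essentially the same route as the paper's proof: reduce to $G$ cyclic, pass to a finite separable extension $L$ splitting $W$ into one-dimensional eigenspaces, observe that $L\otimes_\fie(V\oplus W)$ is of diagonal type satisfying \ref{A1}--\ref{A3} (with \ref{A3} coming from the characteristic/minimal polynomial of $g$ on $\widehat W$ lying in $\ndF_p[t]$), and invoke Theorem~\ref{pro:diagtypewithcoinvariants}. The only detail to make explicit is that $L$ should be obtained by adjoining a primitive element of a splitting field of $\ndF_pG$, so that $L/\fie$ is separable even when $\fie$ is not perfect and Lemma~\ref{lem:CurtisReiner69.9} applies.
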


\begin{proof}
Since $G$ is abelian, hypothesis \ref{item:examples_abeliantype} implies
that $|\supp V|=1$ and $G$ is cyclic with generator $g$.
Since $\ndF_p$ is a perfect field and $\widehat{W}$ is a semisimple $\ndF_p G$-module,  we know by Lemma~\ref{lem:CurtisReiner69.9}
that there exists a finite field extension $L \vert \fie$ such that 
\[
W^L = L\otimes_{\ndF_p}\widehat{W}\simeq  L\ot_{\fie} W
\]
is a direct sum of absolutely simple $LG$-modules. 
Since the trivial $\fie G$-module is not a submodule of $W$, 
the trivial $LG$-module is not a submodule of $W^L$.
The assumptions on $V$ imply that $V^L = L\ot_{\fie} V$ (with the extended $L$-linear action and coaction of $LG$ on it) is an absolutely simple Yetter-Drinfeld module over $LG$.
By assumption, the Nichols algebra
\[ 
\NA (W^L\oplus V^L)\simeq  L\ot_{\fie} \NA (W\oplus V) 
\]
is finite-dimensional.
Since $G$ is abelian and $W^L\oplus V^L$ is a direct sum of absolutely simple objects in $\yd{LG}$, we conclude that $W^L\oplus V^L$ is of 
diagonal type. 

\medbreak
We now claim that Theorem~\ref{pro:diagtypewithcoinvariants} applies in this setting, where $(x_i)_{i\in I_1}$ is a basis of $W^L$ of eigenvectors of the action of the generator $g$ of $G$, and  $(x_k)_{k\in I_2}$ is a basis of the one-dimensional Yetter-Drinfeld module $V$. Indeed, \ref{A1} holds because
$\supp W=\{e\}$, \ref{A2} holds because $W$ contains 
no trivial $\fie G$-submodule, and \ref{A3} holds since 
$\prod_{i\in I_1} (t-q_{ij}q_{ji})\in L[t]$ is the characteristic polynomial of the action of $g$ on $W$, and hence is in $\ndF_p[t]$. Then the corollary 
follows 
from Theorem~\ref{pro:diagtypewithcoinvariants}.
\end{proof}

We say that a family $(M_1,\dots,M_\theta)$ of Yetter-Drinfeld modules over $\fie G$ is \emph{braid-indecomposable} if for each decomposition $\{1,2,\dots,\theta\}=I_1\cup I_2$ with $I_1,I_2\ne \emptyset$ and $I_1\cap I_2=\emptyset $ there exist $i\in I_1$, $j\in I_2$ such that
the braiding satisfies
\begin{align*}
c_{M_j,M_i}c_{M_i,M_j} \ne  \id_{M_i \otimes M_j}.
\end{align*}

\begin{thm}\label{pro:nonabtypewithcoinvariants}
Let $G$ be a non-abelian group. 
Let $\theta\geq3$ and $M=(M_1,\dots,M_\theta)$ be a braid-indecomposable 
tuple of absolutely simple Yetter-Drinfeld modules over $\fie G$ such that
$\supp(M_1\oplus\cdots\oplus M_{\theta})$ generates $G$ 
and $\supp M_i=\{e\}$ for some $i$. Then $\dim\NA(M_1\oplus\cdots\oplus M_{\theta})=\infty$. 
\end{thm}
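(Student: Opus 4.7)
The plan is to proceed by contradiction: suppose $\dim \NA(M_1\oplus \cdots \oplus M_\theta) < \infty$, and relabel so that $\supp M_1=\{e\}$. Then $M_1$ is an absolutely simple $\fie G$-module with trivial coaction, so $c_{M_1,M_1}$ equals the flip and in any basis of $M_1$ the coefficients $q_{ii'}$ equal $1$. My strategy is to reduce the configuration to a diagonal-type braided vector space, apply Theorem~\ref{pro:diagtypewithcoinvariants}, and derive a contradiction from $\theta\geq 3$.

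The crucial step is a dimension-reduction. For each $k\geq 2$, the subalgebra of $\NA(M)$ generated by $M_1\oplus M_k$ is a pre-Nichols algebra of the braided vector space $M_1\oplus M_k$, so $\dim \NA(M_1\oplus M_k)<\infty$. Restricting to the subgroup $H_k=\langle \supp M_k\rangle\leq G$ and applying Proposition~\ref{pro:consequence} (if $H_k$ is non-abelian) or Corollary~\ref{cor:examples_abeliantype} (if $H_k$ is abelian), one obtains tight restrictions on $\dim M_1$ and $\dim M_k$. Together with braid-indecomposability of the tuple and the fact that $\supp M$ generates the non-abelian group $G$ (so that no common eigenbasis for all of $G$ exists on a higher-dimensional $M_1$), these restrictions force $\dim M_k=1$ for all $k\geq 2$ and $\dim M_1=1$. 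After extending scalars to a finite separable extension of $\fie$ if needed, $M$ becomes a braided vector space of diagonal type with $\theta$ basis vectors.

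Set $I_1=\{k:q_{kk}=1\}$, which contains the index of $M_1$, and let $I_2$ be its complement. Conditions \ref{A1}, \ref{A2}, \ref{A3} of Theorem~\ref{pro:diagtypewithcoinvariants} can then be verified: \ref{A1} from the flip on $M_1$ and the definition of $I_1$; \ref{A2} from the braid-indecomposability of the tuple together with the fact that elements of a fixed conjugacy class act with the same eigenvalue on a common eigenvector of the centralizer; and \ref{A3} from the characteristic-$p$ structure, since the eigenvalues of a finite-order element lie in $\overline{\ndF_p}$ and so the relevant characteristic polynomial has coefficients in $\ndF_p$. Theorem~\ref{pro:diagtypewithcoinvariants} then forces $|I_1|+|I_2|\leq 3$; since $\theta\geq 3$ we must have $\theta=3$ with $|I_1|+|I_2|=3$, placing us in case~\ref{item:exaD}, where $p=2$. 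This final case is eliminated by the non-abelianness of $G$: each one-dimensional $M_k$ satisfies $\supp M_k=\{g_k\}\subseteq Z(G)$ (the conjugacy class of $g_k$ has size $1$ since $M_k$ is $1$-dimensional), so $\supp M\subseteq Z(G)$; but $\supp M$ generates $G$, yielding $G=Z(G)$ is abelian, a contradiction.

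The main obstacle is the dimension-reduction step forcing $\dim M_k=1$ for all $k$: since each $M_k$ is absolutely simple in $\yd{\fie G}$, Lemma~\ref{lem:filtered} cannot be used to refine the $M_k$ directly within the Yetter-Drinfeld category; the reduction must be carried out by restricting to subgroups of $G$, where the restricted modules may lose absolute simplicity, and careful inductive control of the dimensions using Proposition~\ref{pro:consequence}, Corollary~\ref{cor:examples_abeliantype}, and the classifications in the cited references is required.
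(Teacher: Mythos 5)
There is a genuine gap, and it sits exactly where you flag ``the main obstacle'': the claim that the pairwise restrictions force $\dim M_k=1$ for all $k$ (and $\dim M_1=1$) is neither proved nor true. Proposition~\ref{pro:consequence} only yields $\min\{\dim V,\dim W\}\le 2$ and $\max\{\dim V,\dim W\}\le 4$, and the rank~$2$ classification \cite{MR3656477} on which it rests contains finite-dimensional examples over non-abelian groups whose absolutely simple components have dimension $2$, $3$ or $4$; nothing in your argument excludes these for the pairs $(M_1,M_k)$. Two further problems compound this. First, braid-indecomposability of the whole tuple does not give $c_{M_k,M_1}c_{M_1,M_k}\ne\id$ for \emph{every} $k$ (only for some pair across each bipartition), so Proposition~\ref{pro:consequence} need not even apply to a given pair; and restricting to $H_k=\langle\supp M_k\rangle$ destroys the absolute simplicity hypotheses it needs. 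Second, your endgame is self-undermining: as you yourself observe in the last step, if all $M_k$ were one-dimensional then $\supp M\subseteq Z(G)$ and $G$ would be abelian. So for a non-abelian $G$ the configuration ``all components one-dimensional, diagonal type'' can never be reached, which means the reduction to Theorem~\ref{pro:diagtypewithcoinvariants} cannot be the route to the contradiction --- the contradiction would have to be extracted from the (unexcluded) higher-dimensional, genuinely non-diagonal configurations, and that is precisely the hard part.

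The paper does not attempt any such reduction. Its proof is a direct appeal to the Classification Theorem of \cite{MR3605018} for braid-indecomposable tuples of $\theta\ge 3$ absolutely simple Yetter-Drinfeld modules over non-abelian groups: the finite-dimensional Nichols algebras are exactly those whose ``skeleton'' is of finite type, and inspection of the finite-type skeletons (\cite[Figure 2.1]{MR3605018}) shows that none admits a vertex with support $\{e\}$. In other words, the statement is a corollary of an external classification, not something recoverable from the diagonal-type Theorem~\ref{pro:diagtypewithcoinvariants} plus rank-$2$ bounds. If you want a self-contained argument you would essentially have to redo the rank-$\ge 3$ classification; as written, the proposal does not close.
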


\begin{proof}
This follows from the Classification Theorem in \cite[p. 301]{MR3605018}, by
the definition of skeletons of finite type, 
\cite[Figure 2.1]{MR3605018} and since $\supp M_i=\{e\}$ for some $i$. 
\end{proof}

The next two results collect useful information on Nichols algebras of 
a direct sum of two Yetter-Drinfeld modules, 
one of them having trivial support.
For these claims we introduce the group
\begin{align}\label{eq:def-G}
\mathbb{G} = \langle g,\epsilon:g\epsilon=\epsilon^{-1}g,\epsilon^3=e\rangle.
\end{align} 

\begin{pro}
\label{pro:rank2}
Let $G$ be a non-abelian group and $V$ and $W$ be absolutely simple Yetter-Drinfeld
modules over $\fie G$. Assume
that 
\begin{align}\label{eq:hypothesis-rank2}
\supp W &= \{e\}, & \supp V &\text{ generates } G,& c_{W,V}c_{V,W} &\ne\id.
\end{align}
Let $g \in \supp V$. Then $\dim\NA(V\oplus W)<\infty$ if and only if 
$p=2$, 
$G$ is an epimorphic image of $\mathbb{G}$, $C_G(g)=\langle g\rangle$, and one of the
following conditions hold: 
\begin{enumerate}[leftmargin=*,,label=\rm{(\alph*)}]
\item\label{item:rank2-1} There exist degree-one representations $\chi$ of $C_G(g)$ 
and $\sigma$ of $G$ such that 
\begin{align*} 
& V \simeq M(g,\chi), &
& W \simeq M(e,\sigma), \\
&(\chi(g)-1)(\chi(g)\sigma(g)-1) =0, & 
&(3)_{\sigma(g)} = 0.
\end{align*}

\medbreak
\item\label{item:rank2-2} There exist a degree-one representation 
$\chi$ of $C_G(g)$ and 
an absolutely irreducible representation $\sigma$
of $G$ of degree two such that 
\begin{align}\label{eq:rank2-b}
V &\simeq M(g,\chi), &
W &\simeq M(e,\sigma), &
\chi(g)=\sigma(g^2) &= 1, &
\sigma(e+\epsilon+\epsilon^2) &= 0.
\end{align}
\end{enumerate}
\end{pro}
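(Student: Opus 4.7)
The plan is to combine Proposition~\ref{pro:consequence} (which bounds $\min\{\dim V,\dim W\}\le 2$ and $\max\le 4$) with a careful analysis of small diagonal-type braided sub-vector-spaces of $V\oplus W$ via Theorem~\ref{pro:diagtypewithcoinvariants}, and finally with the structural information coming from the non-abelian $G$-action on $\supp V$. Since $V$ and $W$ are absolutely simple with the stated supports, write $V\simeq M(g,\chi)$ and $W\simeq M(e,\sigma)$ for absolutely irreducible $\chi$ of $C_G(g)$ and $\sigma$ of $G$. Because $g\in Z(C_G(g))$, Schur's lemma gives that $g$ acts on $V_g$ as a scalar $\chi(g)\in\fiet$. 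Proposition~\ref{pro:consequence} yields $\dim V,\dim W\le 4$, and combining this with $\dim V=[G:C_G(g)]\cdot\dim V_g$ and $[G:C_G(g)]\ge 2$ (else $g\in Z(G)$ and $G=\langle\supp V\rangle$ would be abelian, contradicting the hypothesis on $G$) forces $\dim V_g\in\{1,2\}$.

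I would first rule out $\dim V_g=2$ by a rank-three diagonal sub-braiding argument. After extending scalars via Lemma~\ref{lem:CurtisReiner69.9} to ensure semisimplicity, pick independent $v_1,v_2\in V_g$ and a one-dimensional $g$-eigenspace $\fie w\subseteq W$ with eigenvalue $\lambda$ chosen so that $\lambda\ne 1$; such a choice is possible thanks to the hypothesis $c_{W,V}c_{V,W}\ne\id$, which forces $g$ to act non-trivially on $W$. The subspace $\fie v_1\oplus \fie v_2\oplus\fie w$ is a braided subspace of $V\oplus W$ of diagonal type: two vertices labelled $\chi(g)$ share the edge $\chi(g)^2$, and each connects to the third vertex (labelled $1$) by the same edge $\lambda$. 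When $\lambda\in\ndF_p$, the hypotheses~\ref{A1}--\ref{A3} of Theorem~\ref{pro:diagtypewithcoinvariants} hold with $|I_1|=1$, $|I_2|=2$, but this shape is absent from the list \ref{item:exaA}--\ref{item:exaD}, so the Nichols algebra is infinite-dimensional, a contradiction; when $\lambda\notin\ndF_p$, varying $w$ through the $\langle g\rangle$-eigenlines of $W$ (splitting $W$ over a large enough scalar extension) produces another choice for which the previous argument applies. Hence $\dim V_g=1$.

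With $\dim V_g=1$, consider the rank-two sub-braiding $\fie v\oplus\fie w$, of diagonal type $\chi(g)$---$\lambda$---$1$. Its Nichols algebra is finite-dimensional, so (after a suitable scalar extension) the diagram must match an entry of \ref{item:exaA}--\ref{item:exaD}. By exploiting the non-abelian $G$-action, one conjugates $g$ to other elements $g'\in\supp V$ and obtains further compatibility constraints on the eigenvalues of $g'$ on $W$; these constraints are incompatible with the entries~\ref{item:exaA}, \ref{item:exaB} or~\ref{item:exaC} (as those force $p\in\{3,5,7\}$ and rigid values of $\chi(g),\lambda$ that are not stable under the conjugation), leaving only case~\ref{item:exaD} with $p=2$ and $\lambda$ a primitive cube root of unity. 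The absolute simplicity of $\chi$ together with the rigidity of~\ref{item:exaD} forces $C_G(g)=\langle g\rangle$ (an extra element of $C_G(g)$ would yield a second independent character breaking the match). The conjugation action of $G$ on $\supp V$ then produces an element $\epsilon\in G$ with $g\epsilon g^{-1}\ne g$; using $\dim V=[G:\langle g\rangle]\le 3$ and the cube-root-of-unity structure dictated by~\ref{item:exaD}, one deduces $|\epsilon|=3$ and $g\epsilon g^{-1}=\epsilon^{-1}$, so $G$ is an epimorphic image of $\mathbb{G}$.

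Finally, cases~\ref{item:rank2-1} and~\ref{item:rank2-2} split according to $\dim W$. In case~\ref{item:rank2-1}, $\dim W=1$: the relation $\sigma(g)\sigma(\epsilon)=\sigma(\epsilon)^{-1}\sigma(g)$ forces $\sigma(\epsilon)=1$, and reading off the admissible parameters $(a,\zeta)$ of \ref{item:exaD} gives both $(3)_{\sigma(g)}=0$ (i.e.\ $\sigma(g)$ a primitive cube root of unity) and $\chi(g)\in\{1,\sigma(g)^{-1}\}$, packaged as $(\chi(g)-1)(\chi(g)\sigma(g)-1)=0$. In case~\ref{item:rank2-2}, $\dim W=2$: the two $g$-eigenvalues of $W$ must be $\zeta^{\pm 1}$ by the outer labels of~\ref{item:exaD}, forcing $\sigma(g^2)=1$ and $\chi(g)=1$, while the absence of $\epsilon$-fixed vectors translates into $\sigma(e+\epsilon+\epsilon^2)=0$; this is exactly the standard two-dimensional absolutely irreducible representation of $G/\langle g^2\rangle\simeq\SG{3}$. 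The main obstacle is the systematic exclusion of the characteristic three, five and seven entries and the field-extension bookkeeping required when $\sigma(g)\in\ndF_4\setminus\ndF_2$ so that condition~\ref{A3} of Theorem~\ref{pro:diagtypewithcoinvariants} fails over $\fie$; this is where the non-abelian conjugation action on $\supp V$ is genuinely used to propagate the local diagonal-type classification into a global constraint on $G$, $\chi$ and $\sigma$.
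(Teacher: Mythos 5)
Your overall strategy --- reducing to diagonal-type braided subspaces and classifying them via Theorem~\ref{pro:diagtypewithcoinvariants} --- cannot work for this proposition, because hypothesis \ref{A3} of that theorem is not available here. Nothing in the hypotheses of Proposition~\ref{pro:rank2} forces the eigenvalues of $g$ on $W$ to have characteristic polynomial over $\ndF_p$; that rationality condition is only introduced later, in Corollary~\ref{cor:rank2}, via the extra assumption $W\simeq\fie\ot_{\ndF_p}\cW$. The cases you must \emph{keep} make this concrete: in item \ref{item:rank2-1} one has $p=2$, $\dim W=1$ and $(3)_{\sigma(g)}=0$, so $\sigma(g)$ is a primitive cube root of unity and the two-vertex subdiagram $\fie v\oplus\fie w$ has $q_{vw}q_{wv}=\sigma(g)\notin\ndF_2$; condition \ref{A3} fails and Theorem~\ref{pro:diagtypewithcoinvariants} says nothing about this subspace. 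Worse, the proof of that theorem shows that for $p=2$ and $|I_1|=|I_2|=1$ there are \emph{no} examples satisfying \ref{A1}--\ref{A3}, so a naive application of your method would wrongly discard item \ref{item:rank2-1} altogether (note the proposition asserts $p=2$ in \emph{all} cases, whereas the rank-two entries \ref{item:exaA}--\ref{item:exaC} of the theorem live in characteristics $3,5,7$). Your ``field-extension bookkeeping'' cannot repair this: \ref{A3} is a statement about the prime field and is unaffected by enlarging $\fie$. The same defect undermines your rank-three argument ruling out $\dim V_g=2$ when the eigenvalue $\lambda$ is not in $\ndF_p$, and your proposal also never addresses the ``if'' direction, i.e.\ that the modules in \ref{item:rank2-1} and \ref{item:rank2-2} really do have finite-dimensional Nichols algebras.

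The paper's proof takes a different and much shorter route: by the classification of \cite{MR3656477} (the same source as Proposition~\ref{pro:consequence}), $V\oplus W$ must be isomorphic to one of the finitely many examples listed in Section~1 of that reference, and one simply checks which of those admit a summand with central support; Examples~1.10 and 1.11 there yield items \ref{item:rank2-1} and \ref{item:rank2-2}, and that list also supplies the finite-dimensionality in those two cases. That classification is obtained without any reduction to diagonal type, which is exactly why it covers the non-$\ndF_p$-rational examples your approach misses. If you wish to avoid citing that list, you need a genuinely different argument for both implications; the diagonal-type theorem alone is not enough.
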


\begin{proof}
The assumptions imply that $V\oplus W$ is isomorphic to
one of the
examples of \cite[Section 1]{MR3656477}. Examples 1.2, 1.3 and
1.5, 1.9 therein have no direct summand with central support. In Example 1.7
the cocycle condition is not fulfilled, since the support of $W$ 
is trivial. In Example~1.10, the conditions
of the proposition are met if and only if
$V$ and $W$ satisfy the conditions in the first item. 
In Example~1.11 the conditions
of the proposition are fulfilled if and only if
$V$ and $W$ satisfy the conditions in the second item.
\end{proof}

\begin{cor}
\label{cor:rank2}
Let $G$, $V$ and $W$ be as in Proposition \ref{pro:rank2}. Assume
that \eqref{eq:hypothesis-rank2} holds and, moreover, that 
\begin{align} \label{eq:rep-defined-prime-field}
W\simeq\fie \ot_{\ndF_p}\cW
\end{align}
for an $\ndF_pG$-module $\cW$.
Let $g \in \supp V$. Then $\dim\NA(V\oplus W)<\infty$ if and only if 
$p=2$, 
$G$ is an epimorphic image of $\mathbb{G}$, $C_G(g)=\langle g\rangle$, and
there exist a degree-one representation 
$\chi$ of $C_G(g)=\langle g\rangle$ and 
an absolutely irreducible representation $\sigma$
of $G$ of degree two such that 
\eqref{eq:rank2-b} holds.
\end{cor}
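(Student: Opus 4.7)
The plan is to reduce directly to Proposition~\ref{pro:rank2} and then use the descent hypothesis \eqref{eq:rep-defined-prime-field} to eliminate one of its two conclusions. The ``if'' direction is immediate: \eqref{eq:rank2-b} is identical to the hypotheses of Proposition~\ref{pro:rank2}\ref{item:rank2-2}, and that alternative already forces $\dim\NA(V\oplus W)<\infty$. So all the content lies in the ``only if'' direction.

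For the converse I would first invoke Proposition~\ref{pro:rank2}, whose hypotheses hold by \eqref{eq:hypothesis-rank2}. It yields $p=2$, $G$ an epimorphic image of $\mathbb{G}$, $C_G(g)=\langle g\rangle$, together with either alternative \ref{item:rank2-1} or alternative \ref{item:rank2-2}. The only remaining task is to rule out alternative \ref{item:rank2-1} by means of the descent hypothesis~\eqref{eq:rep-defined-prime-field}; this is the heart of the argument.

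The key observation is a short rationality check. In alternative \ref{item:rank2-1}, $W$ is one-dimensional and determined by a character $\sigma\colon G\to\fie^\times$ subject to $(3)_{\sigma(g)}=1+\sigma(g)+\sigma(g)^2=0$, so $\sigma(g)$ must be a primitive cube root of unity in~$\fie$. However, \eqref{eq:rep-defined-prime-field} exhibits $W$ as the scalar extension of a one-dimensional $\ndF_2G$-module $\cW$ (the dimension count $\dim_{\ndF_2}\cW=\dim_{\fie}W=1$ is automatic), and the only group homomorphism $G\to\ndF_2^\times=\{1\}$ is trivial. Hence $\sigma(g)=1$ after extending scalars, and $(3)_{\sigma(g)}=1\ne 0$ in characteristic~$2$, contradicting $(3)_{\sigma(g)}=0$. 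So alternative \ref{item:rank2-2} must hold, which is precisely~\eqref{eq:rank2-b}.

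I do not foresee any serious obstacle: the corollary is essentially a one-line refinement of Proposition~\ref{pro:rank2}, enabled by the fact that $\ndF_2^\times$ contains no nontrivial roots of unity. The only mild subtlety is checking that $\cW$ is one-dimensional, which as noted is forced by the tensor-product description.
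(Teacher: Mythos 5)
Your proof is correct and follows essentially the same route as the paper: invoke Proposition~\ref{pro:rank2}, then use \eqref{eq:rep-defined-prime-field} to force $\sigma$ trivial in alternative \ref{item:rank2-1} (since $\ndF_2^\times$ is trivial) and derive a contradiction. The only cosmetic difference is that you contradict $(3)_{\sigma(g)}=0$ directly, whereas the paper contradicts the hypothesis $c_{W,V}c_{V,W}\ne\id$; both are valid consequences of $\sigma$ being trivial.
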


\begin{proof}
By Proposition \ref{pro:rank2}, $p = 2$. If $\dim W=1$, then the assumption \eqref{eq:rep-defined-prime-field} says that $\sigma$ is the trivial representation of $G$,
which contradicts the hypothesis $c_{W,V}c_{V,W} \ne\id$. Therefore
$V$ and $W$ have to be as in Proposition~\ref{pro:rank2} \ref{item:rank2-2}.
\end{proof}

\begin{rem} \label{rem:V+W}
More precisely, the Yetter-Drinfeld modules $V$ and $W$ in Corollary~\ref{cor:rank2} have the following constraints. 
Equation $\sigma (g^2)=\id $ implies that $1$ is the only eigenvalue of $\sigma(g)$, since $\car \fie =2$. On the other hand, $\sigma (g)\ne \id $
since otherwise $\sigma(\epsilon^2)=\sigma(g\epsilon g^{-1})=\sigma(\epsilon)$ and hence
$0=\sigma(e+\epsilon+\epsilon^2)=\sigma(e)$, a contradiction.
\end{rem}

We shall use the following well-known facts.

\begin{rem}\label{rem:YD_charp}
Let $G$ be a group. Then any $\fie G$-module $X$ becomes 
a Yetter-Drinfeld module over $\fie G$ with trivial coaction 
$x\mapsto 1 \ot x$ for all $x\in X$.
\end{rem}
 
\begin{pro}\label{pro:sumoftwoveryspecialYD}
Let $G$ be a finite group and consider the following data:
\begin{itemize}[leftmargin=*]\renewcommand{\labelitemi}{$\circ$}
\item $Y\in\yd{\fie G}$ such that $\supp Y$ 
is a conjugacy class of $G$ which generates $G$,

\medbreak
\item $\cX \neq 0$ is a semisimple $\ndF_p G$-module without $G$-invariant direct summands; 
set 
\begin{align*}
X = \cX^{\fie} = \fie\ot_{\ndF_p} \cX\in \yd{\fie G}
\end{align*}
with trivial coaction as in Remark~\ref{rem:YD_charp}. 
\end{itemize}

\medbreak
\noindent Assume that $\dim\NA(X \oplus Y)<\infty$.
Then $\cX$ is a simple $\ndF_p G$-module and $Y$ is absolutely simple in $\yd{\fie G}$.
\end{pro}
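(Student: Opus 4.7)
The plan is to handle the abelian and non-abelian cases separately, in each reducing to one of the classification theorems of this section after extending scalars to a splitting field $L|\fie$. A preliminary observation used in both cases is that for any braided subspace of a Yetter-Drinfeld module the induced map of Nichols algebras is injective, so in particular $\dim\NA(Y)\leq\dim\NA(X\oplus Y)<\infty$; also, taking $G$-invariants commutes with flat base change, so the hypothesis that $\cX$ has no $G$-invariant direct summand propagates to the statement that no composition factor of $L\ot_{\ndF_p}\cX$ as an $LG$-module is trivial.

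Assume $G$ is non-abelian. Corollary~\ref{cor:absolutely_irreducible} applied to $Y$ gives that $Y$ is absolutely simple. Suppose, for contradiction, that $\cX=\cX_1\oplus\cdots\oplus\cX_r$ with $r\geq 2$ simple summands, and choose $L|\fie$ finite so that $L\ot X$ decomposes into absolutely simple $LG$-modules $X_{i,k}^L$, regarded in $\yd{LG}$ with trivial coaction. Since $Y$ is absolutely simple, so is $L\ot Y$; hence $L\ot(X\oplus Y)$ is a direct sum of $\theta\geq 1+r\geq 3$ absolutely simple Yetter-Drinfeld modules over $LG$. The braiding among the $X_{i,k}^L$ is trivial (trivial coactions), while the braiding between $L\ot Y$ and any $X_{i,k}^L$ is non-trivial, because $\supp Y$ generates $G$ and no $X_{i,k}^L$ is the trivial $LG$-module. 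An inspection of partitions of the index set then shows the tuple is braid-indecomposable, and Theorem~\ref{pro:nonabtypewithcoinvariants} gives $\dim \NA(L\ot(X\oplus Y))=\infty$, contradicting base-change invariance of the Nichols algebra dimension.

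Assume $G$ is abelian. Then $\supp Y=\{g\}$ and $G=\langle g\rangle$. Take $L|\fie$ a finite extension so that every simple $LG$-module is one-dimensional; then $L\ot X$ is a direct sum of one-dimensional YD-pieces $X_i^L$, and a composition series of $L\ot Y$ has one-dimensional subfactors $Y_j^L$. Combining these into a flag of $L\ot(X\oplus Y)$ and applying Lemma~\ref{lem:filtered} yields a braided vector space of diagonal type $\bigoplus X_i^L\oplus\bigoplus Y_j^L$ whose Nichols algebra is still finite-dimensional. I would then apply Theorem~\ref{pro:diagtypewithcoinvariants} with $I_1$ indexing the $X$-pieces and $I_2$ the $Y$-pieces: \ref{A1} holds because $X$ has trivial coaction; \ref{A2} reduces to $q_{ij}q_{ji}=\alpha_i\neq 1$, where $\alpha_i$ is the eigenvalue of $g$ on $X_i^L$ and non-triviality follows from the preliminary remark; \ref{A3} holds because $\prod_{i\in I_1}(t-\alpha_i)$ is the characteristic polynomial of $g$ acting on the $\ndF_p$-form $\cX$. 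In each finite-dimensional case \ref{item:exaA}--\ref{item:exaD} one has $|I_2|=1$, so the composition length of $L\ot Y$ is one, forcing $Y$ to be absolutely simple; and $|I_1|\in\{1,2\}$, with $\cX$ of dimension one being obviously simple, while in case~\ref{item:exaD} the eigenvalues $\zeta,\zeta^{-1}\in\ndF_4\setminus\ndF_2$ are Galois conjugate over $\ndF_2$, making the characteristic polynomial of $g$ on $\cX$ the irreducible $t^2+t+1$, and again $\cX$ is simple.

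The main technical point, common to both cases, is to propagate the no-trivial-summand hypothesis on $\cX$ through field extensions and decompositions, ensuring that no simple piece in the split form of $L\ot X$ is trivial. This non-triviality is what provides the braid-indecomposability needed for Theorem~\ref{pro:nonabtypewithcoinvariants} in the non-abelian case and condition~\ref{A2} in the abelian case.
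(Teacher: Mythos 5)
Your proof is correct and rests on the same pillars as the paper's: base change to a splitting field $L$ of $\ndF_pG$ (via Lemma~\ref{lem:CurtisReiner69.9}), verification that the trivial-coaction pieces are braid-linked to $Y$ because $\supp Y$ generates $G$ and $\cX$ has no invariants, and then Theorem~\ref{pro:nonabtypewithcoinvariants} (non-abelian case) resp.\ Theorem~\ref{pro:diagtypewithcoinvariants} (abelian case). The genuine difference is in how you dispose of the possibility that $Y$ is not semisimple. The paper does this uniformly in a second step: it takes a full flag of $Y$, passes to $\gr(X^L\oplus Y^L)$ via Lemma~\ref{lem:filtered}, and feeds the semisimple graded object back into the first step. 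You instead split by cases: when $G$ is non-abelian you invoke Corollary~\ref{cor:absolutely_irreducible} directly on $Y$, using the (standard, and implicitly used elsewhere in the paper) fact that a braided subspace induces an embedding of Nichols algebras, so $\dim\NA(Y)\le\dim\NA(X\oplus Y)<\infty$; this makes $Y$ absolutely simple at the outset and the flag is never needed there. In the abelian case you still need the flag-and-$\gr$ reduction (Corollary~\ref{cor:absolutely_irreducible} is unavailable), and your treatment of case~\ref{item:exaD} — reading off simplicity of $\cX$ from the irreducibility of $t^2+t+1$ over $\ndF_2$ — is in fact slightly more transparent than the paper's terse remark that \ref{item:exaD} "is discarded because $\zeta\ne\zeta^{-1}$". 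What your route buys is a shorter argument in the non-abelian branch at the cost of leaning on Proposition~\ref{pro:consequence} through Corollary~\ref{cor:absolutely_irreducible}; the paper's two-step structure keeps the abelian and non-abelian branches parallel. Both are valid, and there is no circularity in your use of Corollary~\ref{cor:absolutely_irreducible}, which is established before this proposition.
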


\begin{proof}
We split the proof into two steps.

\begin{step}\label{step:one}
We prove the claim first under the assumption that $Y$ is semisimple
and all its simple summands are absolutely simple.
\end{step}

Let $E$ be a (finite) splitting field for the group algebra $\ndF_p G$;
$E \vert \ndF_p$ is separable because the field $\ndF_p$ is perfect. 
Let $a\in E$ be such that $E=\ndF_p[a]$. Then $L=\fie[a]$ is separable over $\fie $.
Since $X$ is semisimple, Lemma~\ref{lem:CurtisReiner69.9} says that
\begin{align*}
X^L=L\otimes_{\ndF_p}\cX \simeq L\ot_{\fie} \cX^{\fie}
\end{align*}
is semisimple; by the choice of $L$, 
any simple summand is an absolutely simple $L G$-module. 
Since $\cX$ has no non-zero $G$-invariant elements, $X^L$ also has no non-zero $G$-invariant elements. Let $Y^L = L\ot_\fie Y$. The assumptions on $Y$ on Step~1 imply that $Y^L$
is a direct sum of absolutely simple Yetter-Drinfeld modules over $L G$.
Thus $X^L\oplus Y^L$ is a direct sum of absolutely simple objects in $\yd{L G}$.
By hypothesis, the Nichols algebra
\begin{align*} \NA (X^L\oplus Y^L)\simeq L\ot_\fie \NA (X \oplus Y) \end{align*}
is finite-dimensional. Take $v$ in a simple component of $X^L$ and 
$w$ in a simple component of $Y^L$, $w$ homogeneous of degree $g$. If $v\ot w\ne 0$, then
\begin{align*}
c^2 (v \otimes w) &= c(w \otimes v) = g \cdot v \otimes w \neq v \otimes w, 
\end{align*}
since $\cX$ has no $G$-invariants and $\supp Y$ generates $G$. 
This implies that $X^L\oplus Y^L$ is braid-indecomposable.

\medbreak
If $G$ is non-abelian, then Theorem~\ref{pro:nonabtypewithcoinvariants} implies
that $X^L$and $Y^L$ are absolutely simple in $\yd{L G}$, hence 
$Y$ is absolutely simple in $\yd{\fie G}$ by the assumption of the Step and
$X$ is a simple $\ndF_p G$-module.

\medbreak
Assume now that $G$ is abelian. Then $\supp Y = \{h\}$ for some $h \in H$, being a
conjugacy class of $G$.
Moreover, $X^L\oplus Y^L$ is a direct sum of 
absolutely simple Yetter-Drinfeld modules over $L G$, hence it is of diagonal type. 
Let $(x_i)_{i\in I_1}$ be a basis of $X^L$ and 
$(x_j)_{j\in I_2}$ be a basis of $Y^L$ 
such that $c(x_i\ot x_j)\in Lx_j\ot x_i$ for all $i,j\in I$, where $I=I_1\cup I_2$. 
Since the $L G$-coaction on $X^L$ is trivial, 
$X^L$ does not contain any non-zero $G$-invariant element, 
and $G$ is generated by $h$, the assumptions in Theorem~\ref{pro:diagtypewithcoinvariants} are fulfilled. 
Then Theorem~\ref{pro:diagtypewithcoinvariants} 
implies that $X$ is a simple $\ndF_p$-module and $Y$ is absolutely simple 
in $\yd{L G}$ (since it is one-dimensional). Indeed, \ref{item:exaD}
is discarded because $\zeta \neq \zeta^{-1}$.

\begin{step}
The general case. Let
\end{step} 
\begin{align*} 0\subseteq Y_1\subseteq Y_2\subseteq \cdots \subseteq Y_r=Y \end{align*}
be a full flag of Yetter-Drinfeld submodules of $Y$. 
Let $L$ be a (finite) splitting field for the group algebra $\fie G$,
hence also for the Drinfeld double $D(\fie G)$,
and set $Y_{i}^L = L\ot_\fie Y_i$. 
Then $Y_{i+1}^L /Y_i^L$ is a direct sum of absolutely simple 
objects in $\yd{L G}$ and 
\begin{align*} 0\subseteq Y_1^L \subseteq Y_2^L \subseteq \cdots \subseteq Y_r^L
\subseteq X^L\oplus Y^L \end{align*}
is a flag of Yetter-Drinfeld submodules of $X^L \oplus Y^L$. 
The associated graded Yetter-Drinfeld module $\gr (X^L \oplus Y^L) \simeq X^L \oplus \gr Y^L$ 
satisfies the assumptions in Step \ref{step:one};
hence $X$ is a simple $\ndF_p G$-module and $\gr(Y^L)$ is absolutely simple in $\yd{L G}$. 
Thus $Y^L$ is absolutely simple in $\yd{L G}$ and $Y$ is absolutely simple in $\yd{\fie G}$.
\end{proof}

\subsection{Filtrations associated to a normal {\it p}-subgroup}\label{subsec:reduction}

We start by a consequence of Proposition \ref{pro:A}. Recall that $p=\car\fie >0$.
Consider the following setting:

\begin{itemize}[leftmargin=*]\renewcommand{\labelitemi}{$\circ$}
\item $G$ is a finite group, $\varGamma \lhd G$ is a normal $p$-subgroup; 

\medbreak
\item $V\in \yd{\fie G}$, $A = \NA (V)\# \fie G = \bigoplus_{n \in \ndN_0} A(n)$
where $A(n) = \NA^n(V)\# \fie G$;

\medbreak
\item $J_{\varGamma}$ is the augmentation ideal of $\fie \varGamma$
and $J$ is the left ideal $A (J_\varGamma + V)$ of $A$.
\end{itemize}

Let $\overline{\pi}:\gr_\cF A \twoheadrightarrow \fie (G/\varGamma)$ 
be the canonical surjection and $\overline{\rho} = (\id \ot \overline{\pi})\Delta$,
cf. \eqref{eq:def-rho}.
\begin{pro}\label{cor:gamma_2} 
Consider the filtration
$\cF=(J^i)_{i\ge0}$. 
\begin{enumerate}[leftmargin=*,,label=\rm{(\alph*)}]
\item\label{item:graded-normalp} 
The ideal $J$ is a Hopf ideal, the graded ring associated to $\cF$  is
\begin{align*} 
\gr_\cF A \simeq(\gr_\cF A)^{\co \overline\rho }\# \fie (G/\varGamma), 
\end{align*}
and $(\gr_\cF A)^{\co \overline \rho }$ is generated as an algebra by
\begin{align*} 
(\gr _\cF A)^{\co \overline \rho }(1)\simeq J_\varGamma /J_\varGamma ^2
\oplus V/ \ad J_\varGamma (V). 
\end{align*}

\medbreak
\item\label{item:graded-normal-sep} If $\dim A < \infty$, then
the filtration $\cF$ is separated and $\dim A = \dim \gr _\cF A$.
\end{enumerate}
\end{pro}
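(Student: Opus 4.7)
The plan is to deduce Proposition~\ref{cor:gamma_2} as an essentially direct application of Proposition~\ref{pro:A}, taking $A(0)=\fie G$ and $J_0=\fie G\cdot J_\varGamma$.

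First I would check that the hypotheses of Proposition~\ref{pro:A} are satisfied. By Lemma~\ref{lem:J}, $J_0$ is a Hopf ideal of $\fie G$ and the quotient is $B=\fie(G/\varGamma)$. Using the smash product relations $v\cdot g=g\cdot(g^{-1}\rhd v)$ in $A$, one sees that $A\cdot V = \bigoplus_{i\ge 1}A(i)$ and $A\cdot J_\varGamma = \fie G\cdot J_\varGamma + \bigoplus_{i\ge 1}A(i)\cdot J_\varGamma$, so
\begin{align*}
J=A(J_\varGamma+V)=J_0\oplus \bigoplus_{i\ge 1}A(i),
\end{align*}
which is exactly the ideal appearing in Proposition~\ref{pro:A}. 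Since $\NA(V)$ is generated by $V$, the smash product $A$ is generated by $A(0)\oplus A(1)$. Applying parts (a) and (c) of Proposition~\ref{pro:A} then gives the bosonization $\gr_\cF A \simeq (\gr_\cF A)^{\co\overline\rho}\# \fie(G/\varGamma)$, the fact that $J$ is a Hopf ideal and $\cF$ is a decreasing Hopf algebra filtration, and the generation of $(\gr_\cF A)^{\co\overline\rho}$ by its degree-one part.

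The main identification work is then to match the degree-one formula of Proposition~\ref{pro:A}(b), namely $(J_0/J_0^2)^{\co\overline\rho}\oplus V/\ad J_0(V)$, with the stated $J_\varGamma/J_\varGamma^2\oplus V/\ad J_\varGamma(V)$. For the second summand I would compute $\ad(g(\gamma-1))(v) = g\rhd(\gamma\rhd v - v)$ and use normality of $\varGamma$ (which makes $\ad J_\varGamma(V)$ stable under $G$) to conclude $\ad J_0(V)=\ad J_\varGamma(V)$. For the first summand I would compute the coaction explicitly: for $g\in G$, $\gamma\in\varGamma$,
\begin{align*}
\overline\rho\bigl(\overline{g(\gamma-1)}\bigr)=\overline{g(\gamma-1)}\otimes\overline{g}\in J/J^2\otimes B,
\end{align*}
so coinvariance forces $g\in\varGamma$; combined with the fact that $\fie G$ is free over $\fie\varGamma$, which gives $J_0/J_0^2\simeq \fie G\otimes_{\fie\varGamma}(J_\varGamma/J_\varGamma^2)$ and hence $J_\varGamma\cap J_0^2=J_\varGamma^2$, this identifies $(J_0/J_0^2)^{\co\overline\rho}$ with $J_\varGamma/J_\varGamma^2$. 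This completes part \ref{item:graded-normalp}.

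For part \ref{item:graded-normal-sep} I would prove that $J$ is nilpotent, which implies that $\cF$ is separated and hence $\dim A=\dim\gr_\cF A$. Two nilpotency facts are combined. First, $A_+=\bigoplus_{i\ge 1}A(i)$ is a homogeneous two-sided ideal of $A$ with $A_+^{N+1}=0$, where $N$ is the top degree of $\NA(V)$ (finite since $\dim A<\infty$). Second, $J_0=\fie G\cdot J_\varGamma$ is nilpotent in $\fie G$ by Lemma~\ref{lem:J}, because $\varGamma$ is a $p$-group; say $J_0^r=0$. Passing to $A/A_+=\fie G$, the image of $J$ is $J_0$, so $J^r\subseteq A_+$, and using that $A_+$ is two-sided we get $J^{r(N+1)}\subseteq A_+^{N+1}=0$. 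Thus $\bigcap_n J^n=0$, the filtration is separated, and the dimension equality follows. The only genuinely technical step is the identification $(J_0/J_0^2)^{\co\overline\rho}\simeq J_\varGamma/J_\varGamma^2$; everything else is bookkeeping once Proposition~\ref{pro:A} is invoked.
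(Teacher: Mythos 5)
Your proposal is correct and follows essentially the same route as the paper: apply Proposition~\ref{pro:A} with $J_0=\fie G\,J_\varGamma$, identify $\ad J_0(V)=\ad J_\varGamma(V)$ via normality of $\varGamma$, and deduce separatedness in part (b) from the nilpotency of $J_0$ (Lemma~\ref{lem:J}) together with the vanishing of $A(i)$ in high degrees. The only difference is that you spell out the identification $(J_0/J_0^2)^{\co\overline\rho}\simeq J_\varGamma/J_\varGamma^2$, which the paper leaves implicit, and you phrase (b) as nilpotency of $J$ rather than separatedness of $(J_0^n)_n$; both are harmless variations.
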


\begin{proof} The left ideal $J_0 = \fie G J_{\varGamma}$ 
is a Hopf ideal of $\fie G$, being the kernel of the projection $\fie G \to \fie (G/\varGamma)$.
Now $A$ is generated in degree 0 and 1, hence
\begin{align*}J = J_0 \oplus \bigoplus_{i\geq 1} A(i).\end{align*}
Then \ref{item:graded-normalp} follows from Proposition~\ref{pro:A};
for \eqref{eq:prop-A-isom}, since $V$ is stable by the adjoint action, one has that 
\begin{align*}
\ad J_0(V) = \ad J_\varGamma \left(\ad \fie G (V)\right) = \ad J_\varGamma (V).
\end{align*}

\medbreak
\ref{item:graded-normal-sep}: Since $\dim A < \infty$, it follows that $G$ is finite and 
$\bigoplus_{i\geq m} A(i) = 0$ for some $m \in \ndN_0$.
Since $\varGamma$ is a normal $p$-subgroup of $G$
and $\car \fie =p$, the filtration
$(J_{\varGamma}^n)_{n \in \ndN_0}$ of $\fie \varGamma$ is separated, hence so is
the filtration $(J_{0}^n)_{n \in \ndN_0}$ of $\fie G$. 
The remaining claim is then clear.
\end{proof}

Recall that the \emph{Frattini subgroup} of the finite group $\varGamma$ is defined 
as the intersection $\Phi(\varGamma)$ of 
the maximal subgroups of $\varGamma$.

\begin{lem}
\label{lem:quotient_JGamma} 
Let $\Phi=\Phi(\varGamma)$. The following statements hold: 

\begin{enumerate}[leftmargin=*,label=\rm{(\alph*)}]
\item\label{item:quotient_JGamma-1}
The multiplication of $\varGamma$ induces an $\ndF_p$-vector space structure on $\varGamma/\Phi$.

\medbreak
\item\label{item:quotient_JGamma-2} 
$\varGamma/\Phi$ is an $\ndF_pG$-module via $g\cdot (x\Phi)=gxg^{-1}\Phi$
for all $g\in G$, $x\in \varGamma $.

\medbreak
\item\label{item:quotient_JGamma-3} 
$J_\varGamma $ is a $\fie G$-module via conjugation and $J_\varGamma^2$
is a $\fie G$-submodule of $J_\varGamma$.

\medbreak
\item\label{item:quotient_JGamma-4}
 The map $\kappa\colon \varGamma/\Phi \ot_{\ndF_p} \fie\to J_\varGamma /J_\varGamma^2$
given by 
\begin{align*}
x\Phi \otimes t &\mapsto t(x-1)+J_\varGamma^2,& x \in \varGamma, \ t \in \fie,
\end{align*} 
is an isomorphism of $\fie G$-modules. 
\end{enumerate}
\end{lem}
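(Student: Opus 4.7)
The plan is to treat parts \ref{item:quotient_JGamma-1}--\ref{item:quotient_JGamma-3} as short consequences of standard group theory and of the fact that conjugation is a $\fie$-algebra automorphism of $\fie\varGamma$, and to reserve the real work for part \ref{item:quotient_JGamma-4}, where the key point is to compare the $\fie$-dimensions of source and target via a Burnside/Jennings-style count.

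For \ref{item:quotient_JGamma-1} I would invoke the Burnside basis theorem: since $\varGamma$ is a finite $p$-group, $\Phi=\varGamma^p[\varGamma,\varGamma]$, so $\varGamma/\Phi$ is elementary abelian and its $\ndF_p$-vector space structure is forced. For \ref{item:quotient_JGamma-2}, $\Phi$ is characteristic in $\varGamma$ and $\varGamma\lhd G$, hence $\Phi\lhd G$; conjugation descends to $\varGamma/\Phi$ and is a group homomorphism of an elementary abelian $p$-group, hence $\ndF_p$-linear. For \ref{item:quotient_JGamma-3}, conjugation by $g\in G$ extends to a $\fie$-algebra automorphism of $\fie\varGamma$; since it preserves the counit $\varepsilon$, it preserves $J_\varGamma=\ker\varepsilon$, and being multiplicative it preserves $J_\varGamma^2$ as well.

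For \ref{item:quotient_JGamma-4}, I would first define the map $\delta\colon\varGamma\to J_\varGamma/J_\varGamma^2$, $x\mapsto (x-1)+J_\varGamma^2$. The identity $xy-1=(x-1)(y-1)+(x-1)+(y-1)$ shows that $\delta$ is a group homomorphism into the additive group of $J_\varGamma/J_\varGamma^2$, and $(y-1)+(y^{-1}-1)+(y-1)(y^{-1}-1)=0$ gives $\delta(y^{-1})=-\delta(y)$, whence $\delta$ kills commutators. The Freshman's dream $(y-1)^p=y^p-1$ (valid in $\fie\varGamma$ because $\car\fie=p$ and $y$ commutes with itself) shows that $\delta(y^p)\in J_\varGamma^p\subseteq J_\varGamma^2$, so $\delta$ also kills $p$-th powers. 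Hence $\delta$ factors through $\varGamma/\Phi$ and, by $\ndF_p$-bilinearity, extends uniquely to the $\fie$-linear map $\kappa$ of the statement. The $G$-equivariance of $\kappa$ is immediate from $g\cdot(x-1)=gxg^{-1}-1$, matching the conjugation action of part \ref{item:quotient_JGamma-2}.

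The main obstacle is showing that $\kappa$ is bijective. Surjectivity is clear since $\{x-1:x\in\varGamma\}$ spans $J_\varGamma$ as a $\fie$-vector space, hence its image spans the quotient. For injectivity I would argue by dimensions: since $\varGamma$ is a $p$-group and $\car\fie=p$, the algebra $\fie\varGamma$ is local with maximal ideal $J_\varGamma$ (as recalled in Lemma~\ref{lem:J}), so Nakayama's lemma identifies $\dim_\fie J_\varGamma/J_\varGamma^2$ with the minimal number of ideal generators of $J_\varGamma$. A standard argument shows this equals the minimal number $d(\varGamma)$ of group generators of $\varGamma$: if $x_1,\dots,x_n$ generate $\varGamma$, then the previous computation writes every $y-1$ modulo $J_\varGamma^2$ as an $\ndF_p$-combination of the $x_i-1$, so together with Nakayama they generate $J_\varGamma$ as a left ideal; the converse is the Burnside basis theorem, which yields $d(\varGamma)=\dim_{\ndF_p}\varGamma/\Phi$. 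Since $\dim_\fie(\varGamma/\Phi\otimes_{\ndF_p}\fie)=\dim_{\ndF_p}\varGamma/\Phi$, both sides of $\kappa$ have the same $\fie$-dimension, and surjectivity upgrades to bijectivity.
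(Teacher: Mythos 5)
Parts \ref{item:quotient_JGamma-1}--\ref{item:quotient_JGamma-3}, the well-definedness of $\kappa$ (via the homomorphism $x\mapsto x-1+J_\varGamma^2$ killing commutators and $p$-th powers), the $G$-equivariance, and the surjectivity are all correct and essentially agree with the paper's argument. Where you diverge is injectivity: the paper simply writes down the inverse, namely the $\fie$-linear map $J_\varGamma \to \varGamma/\Phi\ot_{\ndF_p}\fie$ sending $x-1\mapsto x\Phi\ot 1$ on the basis $\{x-1: x\in\varGamma\setminus\{e\}\}$, and checks that it annihilates $J_\varGamma^2$ because $(x-1)(y-1)=(xy-1)-(x-1)-(y-1)$ maps to $xy\Phi\ot 1 - x\Phi\ot 1 - y\Phi\ot 1=0$. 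You instead attempt a dimension count.

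That dimension count has a gap. Nakayama plus your generation argument gives only the inequality
\[
\dim_\fie J_\varGamma/J_\varGamma^2 \;=\; \text{(min.\ number of left-ideal generators of $J_\varGamma$)} \;\le\; d(\varGamma)\;=\;\dim_{\ndF_p}\varGamma/\Phi,
\]
which points in the same direction as the surjectivity of $\kappa$ and therefore cannot upgrade it to bijectivity. The missing half is $\dim_\fie J_\varGamma/J_\varGamma^2\ge \dim_{\ndF_p}\varGamma/\Phi$, i.e.\ that one cannot generate $J_\varGamma$ as a left ideal with fewer than $d(\varGamma)$ elements; this is \emph{not} the Burnside basis theorem, which relates $d(\varGamma)$ to $\dim_{\ndF_p}\varGamma/\Phi$ but says nothing about ideal generators of $J_\varGamma$. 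To close the gap you must either exhibit the inverse map as above (which is the whole content of the lemma, so at that point the dimension count is superfluous), or pass to the elementary abelian quotient $E=\varGamma/\Phi$ and use that $\fie E\simeq \fie[t_1,\dots,t_n]/(t_1^p,\dots,t_n^p)$ has $\dim_\fie J_E/J_E^2=n$, together with the surjection $J_\varGamma/J_\varGamma^2\twoheadrightarrow J_E/J_E^2$. Either repair is short, but as written the injectivity of $\kappa$ is not proved.
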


\begin{proof}
\ref{item:quotient_JGamma-1} Since $\Gamma$ is a finite $p$-group, $\varGamma/\Phi$ is an elementary abelian $p$-group, hence a vector space over $\ndF_p$. 

\medbreak 
\ref{item:quotient_JGamma-2} and \ref{item:quotient_JGamma-3} follow by direct calculations. 

\medbreak
\ref{item:quotient_JGamma-4} 
The map $\kappa$ is linear, since for any $x,y\in \varGamma$, 
\begin{align*}
xy-1+J_\varGamma^2 &=(x-1)+(y-1)+(x-1)(y-1) +J_\varGamma^2
=(x-1)+(y-1)+J_\varGamma^2. 
\end{align*}
Recall that $\Phi$ is generated by $p$-powers and commutators of elements of $\varGamma$ (see \cite[Lemma~4.5]{MR2426855}). For each such generator $y$ of $\Phi$, $y-1\in J_\varGamma^2$ by the beginning of the proof of~\ref{item:quotient_JGamma-4}.
Hence $\kappa$ is a well-defined $\fie$-linear surjection, which clearly
commutes with the action of $G$. 
To prove that $\kappa $ is invertible,
note that $J_\varGamma^2$ is the subspace of $J_\varGamma$ spanned by all elements of the form
\begin{align*} (x-1)(y-1)=(xy-1)-(x-1)-(y-1), \quad x,y\in \varGamma. \end{align*}
Thus the $\fie$-linear map $J_\varGamma \to \varGamma/\Phi \ot_{\ndF_p}\fie$ given by
\begin{align*}
x-1 &\mapsto x\Phi \ot 1, &x &\in \varGamma,
\end{align*}
annihilates $J_\varGamma^2$; the induced map $J_\varGamma /J_\varGamma^2\to
\varGamma/\Phi \ot_{\ndF_p}\fie$ is the inverse of $\kappa$.
\end{proof}

\section{A classification in positive characteristic}\label{sec:prime-char-solvable}

In this section, $p$ is a prime number
and $\car\fie = p$. 
We characterize the finite-dimensional
Nichols algebras  of a class of
Yetter-Drinfeld modules over  groups. 

\begin{thm}
\label{thm:char_p}
Let $G$ be a finite group such that 
$G/Z^*(G)$ has a non-trivial normal $p$-subgroup. 
Let  $0 \neq V \in \yd{\fie G}$. Assume that 
\begin{itemize}[leftmargin=*]\renewcommand{\labelitemi}{$\circ$}
\item 
$\supp V$ is a conjugacy class of $G$ generating $G$, and

\medbreak
\item $\dim\NA(V)<\infty$.
\end{itemize}
Then $V$ is absolutely simple 
and isomorphic to one of the Yetter-Drinfeld modules
of Examples~\emph{\ref{exa:affine}} (with $p=\dim V$), \emph{\ref{exa:T}, \ref{exa:S4a}} or \emph{\ref{exa:S4b}} (with $p=2$). 
\end{thm}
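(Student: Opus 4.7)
The plan is to exploit the hypothesis that $G/Z^*(G)$ has a non-trivial normal $p$-subgroup by setting up a filtration of $A := \NA(V)\#\fie G$ associated to a carefully chosen normal $p$-subgroup $\varGamma\lhd G$, extracting a two-summand Yetter-Drinfeld module in the degree-$1$ coinvariants of the associated graded algebra, and matching the result against the decomposable-module classifications of Section~\ref{subsec:nichols-decomposable}.

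First I would carry out the preliminary reductions. The hypothesis forces $G$ to be non-abelian, so $V$ is absolutely simple by Corollary~\ref{cor:absolutely_irreducible}. Lemma~\ref{lem:intersection} says $N := Z^*(G)\cap O_p(G)$ acts trivially on $V$; by Proposition~\ref{cor:phcenter}, $V$ and $\pi_*(V)$ have the same braiding and Nichols algebra, $\pi_*(V)$ is still absolutely simple over $\fie(G/N)$, and Lemma~\ref{lem:Op(G)} ensures the hypotheses persist for $G/N$. From now on I assume $Z^*(G)\cap O_p(G) = \{e\}$ while $O_p(G)\ne\{e\}$.

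Then I pick $\varGamma$ to be a minimal normal subgroup of $G$ contained in $O_p(G)$. Lemma~\ref{lem:minimal_normal} gives that $\varGamma$ is elementary abelian, hence $\Phi(\varGamma)=\{e\}$; minimality together with $O_p(G)\cap Z(G) = \{e\}$ forces $\varGamma\cap Z(G)=\{e\}$, so $\cX := \varGamma$ is a non-trivial simple $\ndF_pG$-module without $G$-fixed vectors, and $G/\varGamma$ acts non-trivially on $\cX$. Setting $J = A(J_\varGamma + V)$ and $\cF = (J^i)_{i\ge 0}$, Proposition~\ref{cor:gamma_2} together with Lemma~\ref{lem:quotient_JGamma} yields that $\cF$ is a separated Hopf algebra filtration and
\[
\gr_\cF A \simeq (\gr_\cF A)^{\co\overline\rho}\,\#\,\fie(G/\varGamma),
\]
with $(\gr_\cF A)^{\co\overline\rho}$ generated in degree~$1$ by $X\oplus Y$, where $X := J_\varGamma/J_\varGamma^2\simeq\cX\otimes_{\ndF_p}\fie$ has trivial coaction and $Y := \pi_*(V)$. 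By Proposition~\ref{pro:psistar}, $Y\ne 0$ and $\supp Y = \pi(\supp V)$ is a conjugacy class generating $G/\varGamma$; since $\dim\NA(X\oplus Y)\le\dim(\gr_\cF A)^{\co\overline\rho}<\infty$, Proposition~\ref{pro:sumoftwoveryspecialYD} applies and $Y$ is absolutely simple. The braiding satisfies $c_{Y,X}c_{X,Y}\ne\id$ (otherwise the generating set $\supp Y$ of $G/\varGamma$ would act trivially on $X$, forcing $[G,\varGamma] = \{e\}$ and contradicting $\varGamma\cap Z(G) = \{e\}$). Depending on whether $G/\varGamma$ is abelian or not, Corollary~\ref{cor:examples_abeliantype} or Corollary~\ref{cor:rank2} then pins down $p$, $\dim X$, $\dim Y$ and the braiding on $X\oplus Y$ to a short explicit list.

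The hard part will be the final translation from the pinned-down data $(X\oplus Y, G/\varGamma, p)$ back to $V$ and $G$. Here I would use the identities $|\supp V| = |\supp Y|\cdot[\varGamma:C_\varGamma(g)]$ and, via Proposition~\ref{pro:psistar}, $\dim Y_{\pi(g)} = \dim H_0(C_\varGamma(g),V_g)$, together with the one-dimensionality of $Y_{\pi(g)}$ imposed by the corollaries, to force $\varGamma\simeq\ndF_p$ and $G\simeq\ndF_p\rtimes C_{p-1}$ (with the prescribed action) for $p\in\{3,5,7\}$, recovering Example~\ref{exa:affine}; and $\varGamma\simeq V_4$ with $G\in\{\mathbb A_4,\mathbb S_4\}$ when $p=2$, in which case the further constraints from Remark~\ref{rem:V+W} and the centralizer structure of $\supp V$ distinguish between Examples~\ref{exa:T}, \ref{exa:S4a} and \ref{exa:S4b}.
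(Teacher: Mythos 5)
Your proposal is correct and follows essentially the same route as the paper: the same filtration $\cF=(J^i)$ of $\NA(V)\#\fie G$ attached to a minimal normal $p$-subgroup $\varGamma$, the same passage to $(\varGamma\ot_{\ndF_p}\fie)\oplus V/\ad J_\varGamma(V)$ over $\fie(G/\varGamma)$ via Propositions~\ref{cor:gamma_2}, \ref{pro:psistar} and \ref{pro:sumoftwoveryspecialYD}, the same split according to whether $G/\varGamma$ is abelian (Corollary~\ref{cor:examples_abeliantype}) or not (Corollary~\ref{cor:rank2}), and the same reduction by $N=Z^*(G)\cap O_p(G)$ — which you simply perform at the outset instead of as the paper's final case. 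One small caution on your sketched last step: the theorem only identifies $V$ up to rack and cocycle, and one cannot in general conclude $G\simeq\ndF_p\rtimes C_{p-1}$ or $G\in\{\mathbb A_4,\mathbb S_4\}$ (indeed the braided vector spaces of Example~\ref{exa:T} are not realizable over $\mathbb A_4$); also, pinning down the cocycle in the abelian-quotient case requires showing $C_G(g)$ is cyclic via Proposition~\ref{pro:affine-derived} and Corollary~\ref{cor:abelian_centralizers}, a point your outline passes over.
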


Evidently, a group $G$ satisfying the hypothesis in Theorem~\ref{thm:char_p} could not be nilpotent,
otherwise $G/Z^*(G)$ would be trivial.

\begin{proof}
By hypothesis, $O_p(G/Z^*(G))$ is not trivial. Then $O_p(G)$ is not trivial by Lemma~\ref{lem:hypercenter} \ref{item:hypercenter3}. 
Hence $G$ has a minimal normal $p$-subgroup $\varGamma$; clearly, $\varGamma \leq O_p(G)$. By Lemma~\ref{lem:minimal_normal}, $\varGamma$ is elementary abelian
(we denote it additively).
The group $G$ acts by conjugation on $\varGamma$; the space of invariants for this action is $\varGamma\cap Z(G)$. 
By the minimality of $\varGamma$, $\varGamma$ is a simple $\ndF_p G$-module via this action. 

\medbreak
\emph{Assume first that $O_p(G)\cap Z^*(G)$ is trivial.}

\medbreak
By this assumption, the space of
$G$-invariant vectors of the $\ndF_p G$-module $\varGamma$ is $0$.
Let $J_\varGamma$ be the augmentation ideal of $\fie \varGamma$. 
As in Proposition~\ref{cor:gamma_2},
we consider
\begin{align*}
A &=\NA (V)\#\fie G,&
J &= A (V+J_\varGamma),&
\cF &=(J^i)_{i\ge 0}.
\end{align*}
Since $\dim \NA (V)<\infty $ and $G$ is finite,
$\dim \gr_\cF A < \infty$. By Proposition~\ref{cor:gamma_2},
\[ 
\gr_\cF A \simeq  (\gr_\cF A)^{\co \overline\rho }\# \fie (G/\varGamma),
\]
where $\overline\rho$ is defined in Proposition~\ref{pro:A}, 
and $(\gr_\cF A)^{\co \overline \rho }$ is generated as an algebra by
\[ 
(\gr _\cF A)^{\co \overline \rho }(1)\simeq  J_\varGamma /J_\varGamma ^2\oplus V/\ad J_\varGamma (V).
\]
By Proposition~\ref{pro:A} with $J_0=A(0)J_\varGamma$,  $(\gr _\cF A)^{\co \overline \rho }(1)$ is a Yetter-Drinfeld module over \[(\gr_\cF A)(0)=A(0)/J_0 \simeq \fie(G/\varGamma ),\]
where the last isomorphism holds by Lemma~\ref{lem:J}.
Therefore there is a surjective homomorphism of braided Hopf algebras 
\[ 
(\gr _\cF A)^{\co \overline \rho }\to\NA\left(J_\varGamma /J_\varGamma ^2\oplus V/\ad J_\varGamma (V)\right).
\]
In particular, $\NA\left(J_\varGamma /J_\varGamma ^2\oplus V/\ad J_\varGamma (V)\right)$ is finite-dimensional.
Since $\varGamma$ is elementary abelian,
Lemma~\ref{lem:quotient_JGamma} \ref{item:quotient_JGamma-4} implies that there is an isomorphism
\[ J_\varGamma /J_\varGamma ^2\oplus V/\ad J_\varGamma (V)\to (\varGamma \ot _{\ndF_p} \fie)\oplus \big(V/\ad J_\varGamma (V)\big) \]
of objects in $\yd{A(0)/J_0} = \yd{\fie(G/\varGamma )}$.  Thus, $\NA\left((\varGamma \ot _{\ndF_p} \fie)\oplus (V/\ad J_\varGamma (V))\right)$ is finite-dimensional.

\medbreak
Now the Yetter-Drinfeld module $(\varGamma \ot _{\ndF_p} \fie)\oplus (V/\ad J_\varGamma (V))$ over $\fie(G/\varGamma )$
satisfies the assumptions of Proposition~\ref{pro:sumoftwoveryspecialYD} with the group
$G/\varGamma$, the Yetter-Drinfeld module $Y = V/\ad J_\varGamma (V)$ and the $\ndF_p(G/\varGamma)$-module $\cX =\varGamma$.
Proposition~\ref{pro:sumoftwoveryspecialYD} tells then that

\begin{itemize}[leftmargin=*]\renewcommand{\labelitemi}{$\circ$}

\item  $\varGamma $ is a simple $\ndF_p (G/\varGamma) $-module and 

\medbreak
\item $V/\ad J_\varGamma (V)$ is an absolutely simple 
Yetter-Drinfeld module over $\fie(G/\varGamma)$.
\end{itemize}

\begin{case}
Assume that $O_p(G)\cap Z^*(G)$ is trivial and that 
$G/\varGamma$ is abelian.
\end{case}
By the discussion above,   
$(\varGamma \ot _{\ndF_p} \fie)\oplus \big(V/\ad J_\varGamma (V)\big)$
is as in Corollary~\ref{cor:examples_abeliantype};  therefore $\dim V/\ad J_\varGamma (V)=1$, $\varGamma$ is elementary abelian of
order 3, 4, 5 or 7 and correspondingly $p= 3, 2, 5$ or 7. Recall that $\varGamma  \cap Z(G) = \{e\}$.
Let $g\in \supp V$. Then 
\[
\supp V/\ad J_\varGamma (V) = \{ g\varGamma \},
\]
and the square of the braiding of
$(\varGamma \ot _{\ndF_p} \fie)\oplus \big(V/\ad J_\varGamma (V)\big)$ satisfies the equation
\[ c^2(\gamma \ot v)=g\gamma g^{-1} \ot v
\]
for all $\gamma \in \varGamma $ and $v\in V/\ad J_\varGamma (V)$.
Since $V/\ad J_\varGamma (V)$ is absolutely simple, 
Proposition~\ref{pro:psistar} implies that $\varGamma$ 
 acts transitively on $\supp V$. 
 Let $\aut $ be the conjugation action of $g$ on $\varGamma$. By Lemma~\ref{lem:Aff}, 
we conclude that $\supp V$ is isomorphic as a rack to 
$\Aff(\varGamma ,\aut )$. Let $D=[G,G]$.
By Proposition~\ref{pro:affine-derived} \ref{item:affine-derived-4} and  Corollary~\ref{cor:abelian_centralizers},
\[ [D,D]=D\cap Z(G)\subseteq O_p(G)\cap Z^*(G)=\{e\}, \]
where the last equation holds by assumption.
By Corollary~\ref{cor:abelian_centralizers}, $C_G(g)$ is cyclic. Thus $V$ can be presented as a braided vector space by the rack
$X = \Aff(\varGamma ,\aut )$
and the constant cocycle $q$,
where $gv=qv$ for $v\in V_g$. 
By the constraints in the Cases \ref{item:exaA}, \ref{item:exaB}, \ref{item:exaC}
and \ref{item:exaD} in 
Theorem \ref{pro:diagtypewithcoinvariants}, $V$ is isomorphic to one of the 
Yetter-Drinfeld modules in Examples~\ref{exa:affine} and \ref{exa:T}.

\begin{case}
Assume that $O_p(G)\cap Z^*(G)$ is trivial and that $G/\varGamma$ is non-abelian.
\end{case}

Let $X=\supp V$ and $g\in X$; by assumption
$X = \prescript{G}{}{g}$. 
By the discussion above, $(\varGamma \ot _{\ndF_p} \fie)\oplus \big(V/\ad J_\varGamma (V)\big)$ is as in Corollary \ref{cor:rank2}.
Thus Corollary~\ref{cor:rank2} implies that $\dim V/\ad J_\varGamma (V)=3$ with support of size three,
and $\varGamma$ is elementary abelian of
order 4. Moreover, $G/\varGamma$ is an epimorphic image of $\mbG$. Let $\epsilon \in G\setminus \varGamma $
such that $g\epsilon \in X$. Then, by Corollary~\ref{cor:rank2}, there are a degree one representation $\rho$ of $C_{G/\varGamma}(g\varGamma) =\langle g\varGamma \rangle $ and an absolutely irreducible representation $\sigma $ of
$G/\varGamma $ of degree two such that
\[ V/\ad J_\varGamma (V)\simeq  M(g\varGamma ,\rho),\quad \fie\ot_{\ndF_2} \varGamma \simeq  M (\varGamma ,\sigma), \]
and
\[ \rho (g\varGamma ) =1,\quad \sigma( (g\varGamma )^2) = \id ,\quad
\sigma(\varGamma +\epsilon \varGamma +\epsilon ^2\varGamma )=0. \]
Note that the action of $g$ on  $\varGamma $ is uniquely determined by $\sigma (g\varGamma )$. By Remark~\ref{rem:V+W}, $1$ is the only eigenvalue of $\sigma(g\varGamma )$, and $\sigma (g\varGamma)\ne \id$. Hence there is a unique $\gamma \in \varGamma \setminus \{e\}$ with $g\gamma =\gamma g$. Particularly, the $\varGamma $-orbit of $g$ under conjugation has a stabilizer of order $2$, and hence an orbit length $4/2=2$. Since $H_0(\varGamma,V)=V/\ad J_\varGamma (V)$ decomposes into three $1$-dimensional homogeneous components with respect to the coaction of $G/\varGamma$, Proposition~\ref{pro:psistar} says that $|X|=2\cdot 3=6$. Since the rack $X$ is indecomposable of size 6,  
Remark \ref{rem:quandles6} says that 
it is isomorphic either to the conjugacy class $\Oc^4_2$ 
of transpositions in $\SG 4$ or 
to the conjugacy class $\Oc^4_4$ of 4-cycles in~$\SG 4$. 

\smallbreak
Assume that $X$ is isomorphic to $\Oc^4_2$. By Remark~\ref{rem:quandles6}, the  centralizer $C_G(g)$ is generated by $g$ and an element of order at most two. Since $\car\fie =2$, $V$ is absolutely simple, and $\rho(g\varGamma )=1$, this implies that $V\simeq  M(g,\tau)$ with the trivial representation $\tau $ of $C_G(g)$, yielding that $V$ appears in Example~\ref{exa:S4a}.

\smallbreak
Assume that $X$ is isomorphic to $\Oc^4_4$. By Remark~\ref{rem:quandles6}, the  centralizer of $g\in X$ in $G$ is generated by $g$ and an element $h$ with $h^4=1$. Since $\car\fie =2$, $V$ is absolutely simple, and $\rho(g\varGamma )=1$, this implies that $V\simeq  M(g,\tau)$ with the trivial representation $\tau $ of $C_G(g)$, yielding that $V$ appears in Example~\ref{exa:S4b}.

\begin{case}
Assume that $N = O_p(G)\cap Z^*(G)$ is non-trivial.  
\end{case}

Let $\varphi\colon G\to G/N$ be the canonical map. 
By Corollary~\ref{cor:absolutely_irreducible}, $V$ is absolutely simple. Then Lemma~\ref{lem:intersection} implies that $N$ acts trivially on $V$. By Proposition~\ref{cor:phcenter}, $\varphi_*(V)$ is an absolutely simple Yetter-Drinfeld module over $\fie(G/N)$ with the same braiding as the one of $V$. Hence
\[ \dim \NA (\varphi_*(V))=\dim \NA (V)<\infty . \]
By Lemma~\ref{lem:Op(G)}, $Z^*(G/N)\cap O_p(G/N)$ is trivial. 
In particular, $\dim (\varphi_*(V))_{gN}=1$ for all $g\in \supp V$ by the  previous cases.
Then $\supp V$ and $\supp \varphi_*(V)$ are isomorphic as racks by Lemma~\ref{lem:easylowerstar}.
Hence the claim follows from the the  previous cases.
\end{proof}

\section{Nichols algebras over solvable groups in 
characteristic 0}\label{sec:-char0}
In this section $\fie$ denotes a field of characteristic $0$, 
unless explicitly stated. 
The main result of the section is Theorem~\ref{thm:char_zero} on 
the classification of the
Nichols algebras of Yetter-Drinfeld modules over finite
groups whose solvable radical is different from the hypercenter,
such that the support of the module generates the group.

\subsection{Hurwitz orbits}
Let $\Oc $ be a subrack of a group $G$, e.g., a union of
conjugacy classes. For all
$n\ge 2$, the braid group $\BG{n}$ with standard generators $t_1, \dots, t_{n-1}$
acts on the $n$-fold direct product $\Oc^n$ via the Hurwitz action, concretely 
given by
\begin{align*}
t_i \cdot (x_1, \dots, x_n) &= (x_1, \dots, x_{i-1}, x_i \triangleright x_{i+1}, 
x_{i}, x_{i+2}, \dots, x_n), & 1 \le i \le n-1.
\end{align*}
The orbits of this action are known as \emph{Hurwitz orbits}. We write $|\hurw|$ to denote the cardinal of a Hurwitz orbit $\hurw$. 
The multiplication map
\[ \Oc^n\to G,\quad
(x_1,x_2,\dots,x_n)\mapsto
x_1x_2\dots x_n, \]
commutes with the Hurwitz action, and therefore is constant on Hurwitz orbits.
We call the image of this multiplication map the \emph{$G$-degree} of the Hurwitz orbit. 
The \emph{support} of a Hurwitz orbit $\hurw$ is defined as
\[ \supp \hurw=\bigcup _{(x_1,x_2,\dots,x_n)\in \hurw}
\{ x_1,x_2,\dots,x_n\}. \]

Let  $G$ be a finite group, and $V \in \yd{\fie G}$, $V \neq 0$.
For each Hurwitz orbit $\hurw\subseteq \Oc^n$, 
the Yetter-Drinfeld submodule of the tensor algebra $T(V)$ given by 
\[ T_\hurw(V)=\sum _{(x_1,x_2,\dots,x_n)\in \hurw}V_{x_1}\ot V_{x_2}\ot \cdots \ot V_{x_n} \]
is invariant under the action of the quantum symmetrizer.
We write
\[
\NA ^n_\hurw(V)=T_\hurw(V)/(\ker \Delta_{1^n}\cap T_\hurw(V))
\]
for any Hurwitz orbit $\hurw$ in $\Oc^n$. Here $\Delta_{1^n}: T^n(V) \to T^n(V)$
is the $(1, \dots, 1)$-th component of the iterated comultiplication 
$\Delta^{n-1}$, see \cite[1.3.12]{MR4164719}.
Then
\[ \NA ^n(V)=\bigoplus_{\hurw\subseteq \Oc^n} \NA ^n_\hurw(V). \]

The following lemma, valid for any field $\fie$, is a variation of \cite[Lemma 2.2]{MR2811166}.

\begin{lem}\label{lem:Hurwitz} 
Let $\hurw \subseteq \Oc^2$ be a Hurwitz orbit of $\Oc =\supp V$, 
let $g$ be the $G$-degree of $\hurw$, and let 
$\delta\colon T_\hurw(V)\to T_\hurw(V)$ 
be the restriction of $\Delta_{1^2}$ to $T_\hurw(V)$.
Assume that $|\hurw|$ is odd and that $\dim V_x=1$ for all $x\in \supp \hurw$.
\begin{enumerate}[leftmargin=*,label=\rm{(\alph*)}]

\item The rank of $\delta$ is at least $|\hurw|-1$.

\medbreak  
\item $(1+g^{|\hurw|}x^{-|\hurw|})v=(\det \delta)v$ for all $v\in V_x$, $x\in \supp \hurw$.
 \end{enumerate}
 \end{lem}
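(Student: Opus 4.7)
The plan is to choose an explicit basis of $T_\hurw(V)$ indexed by the orbit, reduce $\delta$ to the weighted cyclic shift $I+P$, read off $\rk\delta$ and $\det\delta$ from the characteristic polynomial of $P$, and finally identify the resulting scalar with $g^{|\hurw|}x^{-|\hurw|}$ via a telescoping identity in $G$. Label the orbit as $\hurw=\{(x_i,y_i):1\le i\le N\}$ with $N=|\hurw|$ and $t_1\cdot(x_i,y_i)=(x_{i+1},y_{i+1})$, indices taken modulo $N$; then $y_i=x_{i-1}$ and $x_{i+1}=gx_i^{-1}$. For each $z\in\supp\hurw$ fix a basis vector $v_z$ of the one-dimensional $V_z$ and define $\lambda_i\in\fiet$ by $x_i\cdot v_{y_i}=\lambda_i v_{x_{i+1}}$; set $\Lambda=\lambda_1\cdots\lambda_N$. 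Since $\Delta_{1^2}=\id+c$ on $T^2(V)$ and $c(v_{x_i}\otimes v_{y_i})=\lambda_i v_{x_{i+1}}\otimes v_{y_{i+1}}$, the matrix of $\delta$ in the basis $e_i=v_{x_i}\otimes v_{y_i}$ is $I+P$, where $Pe_i=\lambda_i e_{i+1}$. Because $e_1,Pe_1,\dots,P^{N-1}e_1$ are nonzero multiples of $e_1,\dots,e_N$, the minimal polynomial of $P$ coincides with its characteristic polynomial, and both equal $t^N-\Lambda$.

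For part (a), the upper-left $(N-1)\times(N-1)$ submatrix of $I+P$ is lower unitriangular, so $\rk\delta\ge N-1$. For part (b), evaluating the characteristic polynomial at $-1$ and using that $N$ is odd gives
\[ \det\delta=\det(I+P)=(-1)^N\chi_P(-1)=(-1)^N\bigl((-1)^N-\Lambda\bigr)=1+\Lambda. \]
It remains to show that $g^Nx^{-N}$ acts on $V_x$ by $\Lambda$; by the symmetry in the labeling it suffices to treat $x=x_1$. Iterating $x_{i+1}=gx_i^{-1}$ yields inductively $x_{2k+2}=g^{k+1}x_1^{-1}g^{-k}$ for $0\le k\le N-1$, and multiplying these factors from right to left telescopes to the group identity
\[ x_{2N}x_{2N-2}\cdots x_4x_2=g^Nx_1^{-N}. \]

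Apply this product to $v_{x_1}$. Since $N$ is odd, $\gcd(2,N)=1$, so successive subscripts increase by $2$ modulo $N$ and cycle through every element of $\{1,\dots,N\}$ in $N$ steps before returning to $1$. The defining relation $x_{2k}\cdot v_{x_{2k-1}}=\lambda_{2k}v_{x_{2k+1}}$ shifts the subscript at each application and accumulates the scalar $\lambda_2\lambda_4\cdots\lambda_{2N}=\Lambda$, so $(g^Nx_1^{-N})\cdot v_{x_1}=\Lambda v_{x_1}$. Combined with $\det\delta=1+\Lambda$, this yields (b). The main difficulty lies in the combinatorial identification of $\Lambda$ with the scalar by which $g^Nx^{-N}$ acts on $V_x$: both the telescoping group identity and the matching between the index-shift scheme and the full product of $\lambda_i$'s rely essentially on the parity assumption that $N$ is odd.
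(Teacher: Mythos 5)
Your proof is correct and follows essentially the same route as the paper's: both exhibit $\delta$ as the identity plus a weighted cyclic shift on a basis indexed by the orbit, read off $\det\delta=1+\Lambda$ using that $|\hurw|$ is odd, and identify $\Lambda$ with the scalar by which $g^{|\hurw|}x^{-|\hurw|}$ acts on $V_x$. The only difference is bookkeeping: the paper normalizes the basis vectors by powers of $g$ so that all weights collapse to a single $\lambda^{-1}$ plus one extra scalar $\nu$ coming from $g^{|\hurw|}v_1=\nu v_1$, whereas you keep arbitrary basis vectors and recover the identification through the telescoping identity $x_{2N}x_{2N-2}\cdots x_2=g^{N}x_1^{-N}$ together with $\gcd(2,N)=1$.
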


\begin{proof}
Let $g_1\in \supp \hurw$, $g_2=gg_1^{-1}$, and let $k\ge 0$ be such that $|\hurw|=2k+1$. Then $(g_2,g_1)\in \hurw$, and there exist uniquely determined elements $g_i\in \supp \hurw$ for $3\le i\le 2k+1$ such that
\[ \hurw=\{ (g_{i+1},g_i) \mid 1\le i\le 2k+1 \}, \quad g_{i+1}g_i=g, \]
where $g_{2k+2}=g_1$.
Note that then
\begin{gather*}
g_{i+2} = g_{i+1}g_ig_{i+1}^{-1}=gg_ig^{-1}
\shortintertext{for all $1\le i\le 2k$, and}
g_2 = g_1g_{2k+1}g_1^{-1} =gg_{2k+1}g^{-1}=g^{k+1}g_1g^{-(k+1)}.
\end{gather*}
Let $v_1\in V_{g_1}$ be a non-zero vector.
For all $1\le i\le k$ let 
\begin{align*}
v_{2i+1}=g^iv_1 &\in V_{g_{2i+1}},& 
v_{2i}=g^{k+i}v_1 &\in V_{g_{2i}}.
\end{align*}
Then 
there exists $\lambda \in \fie^{\times} $ such that $g_iv_i=\lambda v_i$ for all $1\le i\le 2k+1$.
We obtain from $g_{i+1}=gg_i^{-1}$ that
\begin{align*} \delta (v_{i+1}\ot v_i)
&=v_{i+1}\ot v_i+g_{i+1}v_i\ot v_{i+1}\\
&=v_{i+1}\ot v_i+\lambda^{-1}v_{i+2}\ot v_{i+1}
\end{align*}
for all $1\le i\le 2k-1$. Moreover,
\begin{align*} \delta (v_{2k+1}\ot v_{2k})
&=v_{2k+1}\ot v_{2k}+g_{2k+1}v_{2k}\ot v_{2k+1}\\
&=v_{2k+1}\ot v_{2k}+\lambda^{-1}g^{2k+1}v_1\ot v_{2k+1},\\
\delta (v_1\ot v_{2k+1})
&=v_1\ot v_{2k+1}+g_1v_{2k+1}\ot v_1\\
&=v_1\ot v_{2k+1}+\lambda^{-1}v_2\ot v_1.
\end{align*}
Now $g^{2k+1}v_1 = \nu v_1$ for some $\nu \in \fiet$,
hence $\det \delta = \nu \lambda^{- |\hurw|} + 1$; the claim follows.
\end{proof}

\subsection{Orders of braided vector spaces} \label{subsec:orders}
We use the following terminology:
\begin{itemize} [leftmargin=*]\renewcommand{\labelitemi}{$\diamond$}
\item If $R$ is a subring of $\fie$ and $\fm$ is a maximal ideal  
of $R$, then we set $\ndF = R/ \fm$.

\medbreak
\item For a subring $R$ of $\fie$, an \emph{$R$-order}  
$V_R$ of a braided vector space $(V, c)$ is a projective $R$-submodule
of $V$ satisfying $\fie \otimes_R V_R \simeq V$ and  such that 
\begin{align*}
c (V_R \otimes_R V_R) &\subseteq  V_R \otimes_R V_R, &
c^{-1}(V_R \otimes_R V_R) &\subseteq  V_R \otimes_R V_R.
\end{align*}
We write $c_R \in \Aut (V_R \otimes_R V_R)$ to denote the map induced by $c$, and 
$\NA(V_R)$ for the corresponding Nichols algebra as in
\cite[Lemma 3.5]{MR4729697}.

\medbreak
\item  For a finite group $G$, a subring $R$ of $\fie$, and
$V \in \yd{\fie G}$, an \emph{$R$-order} 
$V_R$ of $V$ is a projective $R$-submodule
of $V$ satisfying $\fie \otimes_R V_R \simeq V$, $G \cdot V_R = V_R$
and $V_R = \bigoplus_{g \in G} \left(V_R \cap V_g\right)$.
\end{itemize}

\begin{rem}
\label{rem:realization}
  In the definition of an $R$-order of a Yetter-Drinfeld module $V$, the projectivity assumption 
  on $V_R$ can be replaced by projectivity of the $R$-modules $V_R\cap V_g$, $g\in G$.
  In particular, if $V$ is simple, $R$ is a subring of $\fie$ which is 
  a Dedekind domain, and 
  there is an $R$-submodule $W$ of $V_g$ for some $g\in\supp V$ such that
  $\operatorname{rank}W=\dim V_g$ and $C_G(g)\cdot W\subseteq W$,  
  then $RG\ot _{RC_G(g)} W$ is an $R$-order of~$V$.   
\end{rem}

\begin{lem}[{\cite[Lemma 3.5]{MR4729697}}]
\label{lem:orders}
The braiding $c$ induces a structure of braided vector space on $V_{\ndF}=\ndF\ot _R V_R$ and $\ndF \ot _R \NA (V_R)$ is a pre-Nichols algebra of $V_{\ndF}$. 
If $\NA (V)$ is finite-dimensional, then 
so are $\ndF \ot _R \NA (V_R)$ and $\NA (V_{\ndF})$. \qed
\end{lem}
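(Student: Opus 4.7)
The plan is to reduce the Nichols algebra construction modulo $\fm$. First I would verify that $V_\ndF$ inherits a braiding: the hypothesis that $V_R$ is stable under both $c$ and $c^{-1}$ makes $c_R$ an $R$-linear automorphism of $V_R \ot_R V_R$, with inverse the restriction of $c^{-1}$. The braid equation, an identity of $\fie$-linear maps on $V^{\otimes 3}$, restricts to $V_R^{\otimes 3}$ and then descends to $V_\ndF^{\otimes 3}$, so $c_\ndF = \ndF \ot_R c_R$ endows $V_\ndF$ with a braiding.

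Next, to produce $\NA(V_R)$ I would set $\cJ_n = T^n(V_R) \cap \ker \Omega_n$, viewed inside $T^n(V)$, and take $\NA(V_R) = T(V_R)/\bigoplus_n \cJ_n$. Because $\cJ_1 = 0$, its degree-one component equals $V_R$, and $T^n(V_R)/\cJ_n$ injects into $\NA^n(V)$, hence is torsion-free and therefore projective over the Dedekind ring $R$. A density argument (clearing denominators, using that $\fie$ is the quotient field of $R$) gives $\fie \ot_R \cJ_n = \ker \Omega_n$, so $\fie \ot_R \NA(V_R) \simeq \NA(V)$. Since the quantum symmetrizers commute with base change, $\ndF \ot_R \NA(V_R)$ is a connected graded braided bialgebra generated in degree one by $V_\ndF$, i.e., a pre-Nichols algebra of $V_\ndF$.

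For the finite-dimensionality claim, assume $\dim_\fie \NA(V) < \infty$. Then $\NA^n(V) = 0$ for $n$ large, which forces $\cJ_n = T^n(V_R)$ in those degrees. Hence $\NA(V_R)$ vanishes above some bound, and each of its homogeneous components has finite $R$-rank equal to $\dim_\fie \NA^n(V)$, so $\dim_\ndF \bigl(\ndF \ot_R \NA(V_R)\bigr)$ equals $\dim_\fie \NA(V)$ and is finite. Since $\NA(V_\ndF)$ is a graded quotient of the pre-Nichols algebra $\ndF \ot_R \NA(V_R)$, it is finite-dimensional as well.

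The main obstacle is arranging the definition of $\NA(V_R)$ so that the resulting $R$-module is projective and compatible with base change in both directions: to $\fie$ to recover $\NA(V)$, and to $\ndF$ to produce a pre-Nichols algebra of $V_\ndF$. The Dedekind hypothesis on $R$ is essential here, as it is what forces the torsion-free quotients $T^n(V_R)/\cJ_n$ to be projective.
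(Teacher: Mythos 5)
Your construction---$\NA (V_R)=T(V_R)/\bigl(T(V_R)\cap \ker \Omega\bigr)$, projectivity of the torsion-free graded quotients over the Dedekind domain $R$, and compatibility of the quantum symmetrizer with base change to $\fie$ and to $\ndF$---is exactly the argument behind \cite[Lemma 3.5]{MR4729697}, which this paper quotes without reproof. One small correction: $\fie$ need not be the fraction field of $R$ (e.g.\ $R=\ndZ[q,\epsilon]$ sits inside an arbitrary field of characteristic $0$), but $\fie$ is flat over $R$, which is all that is needed to conclude $\fie \ot_R \cJ_n=\ker \Omega_n$ and hence $\fie\ot_R\NA (V_R)\simeq \NA (V)$.
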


\noindent 
Following Takeuchi, a subspace $U$ of a braided vector space $(W,c)$ is \emph{categorical}  if
\begin{align*}
c(U\ot W) &= W\ot U & &\text{and} & c(W\ot U) &= U\ot W.
\end{align*}

Here is a very useful criterion to identify infinite-dimensional Nichols algebras; it was implicit in the proof of \cite[Theorem 7.6]{MR1886004}. 

\begin{lem}[{\cite[Lemma 3.6]{MR4729697}}]
\label{lem:specialprimitives}
Let $W\subseteq \bigoplus_{n\ge 2}\ndF \ot_R \NA^n (V_R)$
be a categorical subspace consisting of primitive elements. 
If the Nichols algebra of $V_{\ndF}\oplus W$ is infinite-dimensional, 
then $\NA (V)$ is infinite-dimensional. \qed
\end{lem}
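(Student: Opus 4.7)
The plan is to argue the contrapositive: assuming $\dim\NA(V)<\infty$, I will show $\dim\NA(V_\ndF\oplus W)<\infty$. By Lemma~\ref{lem:orders}, the pre-Nichols algebra $\cP:=\ndF\ot_R\NA(V_R)$ of $V_\ndF$ is finite-dimensional; as such it is a graded connected Hopf algebra in $\yd{\ndF G}$ with $\cP(0)=\ndF$, $\cP(1)=V_\ndF$, and generated in degree one.

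The first substantive step would be to exhibit $U:=V_\ndF+W\subseteq\cP$ as a Yetter-Drinfeld submodule of $\cP$ consisting of primitive elements. The sum is direct because $V_\ndF\subseteq\cP(1)$ while $W\subseteq\bigoplus_{n\ge 2}\cP(n)$. The categoricality of $W$ in $\cP$ translates into $W$ being a Yetter-Drinfeld submodule: for YD modules over a group algebra, the formula $c(x\ot y)=x_{(-1)}\cdot y\ot x_{(0)}$ means that $c(W\ot\cP)\subseteq\cP\ot W$ forces $W$ to be a subcomodule and $c(\cP\ot W)\subseteq W\ot\cP$ forces $W$ to be a submodule. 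Together with $V_\ndF$ being the degree-one component, this makes $U$ a YD submodule of $\cP$. The elements of $V_\ndF$ are primitive in $\cP$ (being the degree-one piece of a graded connected Hopf algebra), while $W$ is primitive by hypothesis, so $U$ consists of primitives.

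Next, let $B\subseteq\cP$ be the subalgebra generated by $U$. Since $U$ is a YD submodule of primitives, $B$ is a Hopf subalgebra in $\yd{\ndF G}$. I would then equip $B$ with the ascending filtration $B^{\le n}:=\operatorname{image}\bigl(\bigoplus_{k\le n}T^k(U)\to B\bigr)$ coming from the canonical surjection $T(U)\twoheadrightarrow B$. The primitivity of $U$ yields $\Delta(B^{\le n})\subseteq\sum_{i+j\le n}B^{\le i}\ot B^{\le j}$, so this is a Hopf algebra filtration in $\yd{\ndF G}$, and its associated graded $\gr B$ is a graded connected Hopf algebra in $\yd{\ndF G}$ generated in degree one by $(\gr B)(1)=U$. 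Hence $\gr B$ is a pre-Nichols algebra of $V_\ndF\oplus W$, and
\begin{align*}
\dim\NA(V_\ndF\oplus W)\le\dim\gr B=\dim B\le\dim\cP<\infty,
\end{align*}
contradicting the hypothesis.

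The main subtle point, and the chief obstacle, is that $W$ sits in degrees $\ge 2$ of the native grading on $\cP$, so one cannot directly extract a pre-Nichols structure with $W$ placed in degree one from the grading already at hand. The trick is to discard the native grading and introduce the new word-length filtration from $T(U)$, which moves $W$ into degree one; the two hypotheses on $W$ — categoricality and primitivity — are precisely what is required so that this filtration is a genuine Hopf algebra filtration and its associated graded is a pre-Nichols algebra of $V_\ndF\oplus W$ in $\yd{\ndF G}$.
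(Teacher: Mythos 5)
Your proof is correct and is essentially the standard argument behind the cited result (the paper only quotes \cite[Lemma 3.6]{MR4729697} without reproving it): pass to the subalgebra of $\ndF \ot_R \NA(V_R)$ generated by the categorical subspace $V_{\ndF}\oplus W$ of primitive elements, filter it by word length, and observe that the associated graded is a pre-Nichols algebra of $V_{\ndF}\oplus W$ of dimension at most $\dim \ndF\ot_R\NA(V_R)<\infty$. One minor over-claim: categoricality of $W$ does not by itself force $W$ to be a Yetter--Drinfeld submodule, since your deduction of $G$-homogeneity from $c(W\ot M)\subseteq M\ot W$ (with $M=\ndF\ot_R\NA(V_R)$) requires separating the group-degrees occurring in an element of $W$, and this can fail when two such degrees act identically by conjugation on $\supp M$ (e.g.\ when they differ by a central element). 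This is immaterial here, because the word-length filtration argument only needs $V_{\ndF}\oplus W$ to be a categorical (hence braided) subspace consisting of primitives so that the braiding preserves word length, and the Nichols algebra depends only on the underlying braided vector space.
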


\begin{lem}\label{lem:concrete_new_trick}
Let $R\subseteq \fie$ be a Dedekind domain,  
$G$ be a finite group and $V \in \yd{\fie G}$ admitting an $R$-order $V_R$. 
Let $\hurw$ be a Hurwitz orbit of $(\supp V)^2$ of odd length and of 
$G$-degree $g$. 
Assume that 
\begin{itemize} [leftmargin=*]\renewcommand{\labelitemi}{$\circ$}
\item $\supp V$ is a conjugacy class of $G$ generating $G$, 

\medbreak
\item $|\supp V|\ge 3$ and  $\dim V_x=1$ for all $x\in \supp V$,

\medbreak
\item $|\prescript{G}{}{g}|\ge 3$, and 

\medbreak
\item  that there are a maximal ideal $\fm $ of $R$ and an $x\in \supp \hurw$ such that
 \begin{equation}
 \label{eq:concrete_new_trick}
(1+g^{|\hurw|}x^{-|\hurw|})v_x\in \fm V_x\setminus \{0\}.
 \end{equation}
\end{itemize}
\noindent Then $\NA (V)$ is infinite-dimensional. 
\end{lem}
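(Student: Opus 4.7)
The plan is to manufacture, from the hypothesis, a Yetter-Drinfeld submodule of primitive elements in a suitable pre-Nichols algebra over the residue field $\ndF$, and then apply Lemma~\ref{lem:specialprimitives} to transport infinite-dimensionality back to $\NA(V)$.

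First I locate the primitive subspace. The operator $\delta:=\id+c_R$ on $T_{\hurw,R}$ restricts $\Delta_{1,1}$, and by Lemma~\ref{lem:Hurwitz}(b), $\det\delta$ acts on each rank-one summand $V_{R,x}$ by the scalar $1+g^{|\hurw|}x^{-|\hurw|}\in R$, which by hypothesis~\eqref{eq:concrete_new_trick} lies in $\fm\setminus\{0\}$. Hence $\delta_\fie$ is invertible while $\det\delta_\ndF=0$; combined with Lemma~\ref{lem:Hurwitz}(a), the kernel $W_0:=\ker\delta_\ndF\subseteq T_{\hurw,\ndF}$ is one-dimensional. Since $\delta_\fie$ is injective, the quadratic relations of $\NA(V)$ meet $T_\hurw$ trivially, so $T_{\hurw,\ndF}$ embeds into the degree-two component of the pre-Nichols algebra $\cB:=\ndF\otimes_R\NA(V_R)$ (finite-dimensional by Lemma~\ref{lem:orders}), and the $(1,1)$-coproduct on this image equals $\delta_\ndF$. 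Thus $W_0\subseteq\cB^2$ consists of primitive elements. Taking $W:=\ndF G\cdot W_0\subseteq\cB$ gives a Yetter-Drinfeld submodule supported on $\prescript{G}{}{g}$ with each graded piece one-dimensional (since $C_G(g)$ stabilizes $W_0$ by the $C_G(g)$-equivariance of $\delta$), so $W\simeq M(g,\psi)$ for a degree-one character $\psi$ of $C_G(g)$ and $\dim W=|\prescript{G}{}{g}|\geq 3$. As a Yetter-Drinfeld submodule, $W$ is categorical in $\cB$, and primitivity is preserved by the $G$-action.

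By Lemma~\ref{lem:specialprimitives} it suffices to show $\dim\NA(V_\ndF\oplus W)=\infty$. Both $V_\ndF=M(x_0,\chi_\ndF)$ (with $x_0\in\supp V$) and $W$ are absolutely simple Yetter-Drinfeld modules over $\ndF G$, their graded pieces being one-dimensional, with $\dim V_\ndF=|\supp V|\geq 3$ and $\dim W=|\prescript{G}{}{g}|\geq 3$. The group $G$ is non-abelian (as $|\supp V|\geq 3$), and $\supp(V_\ndF\oplus W)\supseteq\supp V$ generates $G$. Proposition~\ref{pro:consequence} then forces $\dim\NA(V_\ndF\oplus W)=\infty$ at once, provided the monodromy $c_{W,V_\ndF}c_{V_\ndF,W}\neq\id$, since $\min\{\dim V_\ndF,\dim W\}\geq 3>2$. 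For $v\in(V_\ndF)_x$ and $w\in W_h$,
\[
(c_{W,V_\ndF}c_{V_\ndF,W})(v\otimes w)=(xhx^{-1})v\otimes xw,
\]
and if this equalled $v\otimes w$ for every $x\in\supp V$ and every such $h$, then each element of $\prescript{G}{}{g}$ would be centralized by $\langle\supp V\rangle=G$, contradicting $|\prescript{G}{}{g}|\geq 3$. Hence the monodromy is non-trivial and the claim follows.

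The step I expect to be the main obstacle is the transition in the first paragraph from the kernel $W_0\subseteq T_{\hurw,\ndF}$ to a genuine primitive Yetter-Drinfeld submodule of $\cB$: one must verify that the induced $(1,1)$-coproduct on $\cB^2$ (a priori a quotient of $T^2(V_\ndF)$) restricts to $\delta_\ndF$ on the image of $T_{\hurw,\ndF}$, and that $W_0$ is indeed stabilized as a scalar subspace by $C_G(g)$ so that the $G$-orbit $W$ has one-dimensional graded pieces and remains inside $\cB$ as a primitive Yetter-Drinfeld submodule.
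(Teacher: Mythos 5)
Your overall strategy is exactly the paper's: use Lemma~\ref{lem:Hurwitz} and hypothesis~\eqref{eq:concrete_new_trick} to produce a one-dimensional kernel of $\Delta_{1^2}$ inside the reduction of $\NA^2_\hurw(V_R)$ modulo $\fm$, generate from it a Yetter-Drinfeld module of primitive elements supported on $\prescript{G}{}{g}$, rule out finiteness of $\NA(V_\ndF\oplus W)$ via Proposition~\ref{pro:consequence}, and conclude with Lemma~\ref{lem:specialprimitives}. The identification of $W_0$ with the paper's $\overline{W}_\hurw$, the base-change argument (which really rests on projectivity of $\NA^2_\hurw(V_R)$ over the Dedekind domain $R$, a point you pass over quickly but which is fine), and the verification that the monodromy $c_{W,V_\ndF}c_{V_\ndF,W}$ is non-trivial all match the paper.

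There is, however, one genuinely false step --- precisely the one you flagged as the expected obstacle. You assert that $C_G(g)$ stabilizes $W_0$ "by the $C_G(g)$-equivariance of $\delta$", and deduce that $W=\ndF G\cdot W_0\simeq M(g,\psi)$ is simple with one-dimensional homogeneous components. This is not true in general: an element $h\in C_G(g)$ sends $T_{\hurw,\ndF}$ to $T_{h\cdot\hurw,\ndF}$, where $h\cdot\hurw$ is a Hurwitz orbit with the \emph{same} $G$-degree $g$ but possibly different from $\hurw$; only the subgroup $\mathrm{Stab}_G(\hurw)\leq C_G(g)$ preserves $T_{\hurw,\ndF}$, hence $W_0$. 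When $\mathrm{Stab}_G(\hurw)\subsetneq C_G(g)$ the component $W_g$ has dimension $[C_G(g):\mathrm{Stab}_G(\hurw)]>1$ and $W$ need not be simple. This is not a hypothetical worry: the lemma is applied in the paper to length-one orbits $\{(x,x)\}$ of $G$-degree $x^2$, where $\mathrm{Stab}_G(\hurw)=C_G(x)$ can be a proper subgroup of $C_G(x^2)$ (e.g.\ a $4$-cycle versus its square in $\SG 4$). The consequence is that your application of Proposition~\ref{pro:consequence}, which requires both summands to be absolutely simple, does not apply to $V_\ndF\oplus W$ as you have defined $W$. The repair is small and is what the paper does: one only needs $\supp W=\prescript{G}{}{g}$ (clear, since the degrees $hgh^{-1}$ of the translates $hW_0$ run over the whole class) and then one replaces $W$ by a \emph{simple} Yetter-Drinfeld submodule $W'\subseteq W$; since $\supp W'$ is a non-empty union of conjugacy classes contained in the single class $\prescript{G}{}{g}$, it equals $\prescript{G}{}{g}$, so $\dim W'\geq 3$ and Proposition~\ref{pro:consequence} applies to $V_\ndF\oplus W'$. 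With that substitution your argument goes through and coincides with the paper's proof.
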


\begin{proof}
Note that $\NA^2(V_R)=(V_R\ot_R V_R)/\ker \Delta_{1^2}$, hence $\NA^2(V_R)$ is a torsion-free $R$-module. Since $R$ is a Dedekind domain, $\NA^2(V_R)$
is projective, and therefore
$\NA^2_\hurw(V_R)$ is projective.
Thus
\[ \dim  \ndF \ot_R\NA ^2_\hurw(V_R)=\dim \fie\ot_R \NA ^2_\hurw(V_R)=\dim \NA ^2_\hurw(V). \]
Let
\[ \overline{W}_\hurw=\ker \left(\Delta_{1^2}\colon \ndF\ot _R \NA^2_\hurw(V_R)\to \ndF \ot _R V_R\ot _R V_R\right). \]
Equation~\eqref{eq:concrete_new_trick} and Lemma~\ref{lem:Hurwitz} imply that $\dim \overline{W}_\hurw=1$. 
Let
\[ \mathrm{Stab}_G(\hurw)=\{ x\in G\mid \forall (h_1,h_2)\in \hurw:(xh_1x^{-1},xh_2x^{-1})\in \hurw \}. \]
Since the action of $G$ on $G\times G$ commutes with $\Delta_{1^2}$, $\overline{W}_\hurw$ is an $\ndF\mathrm{Stab}_G(\hurw)$-submodule of $\mathrm{Res}^G_{\mathrm{Stab}_G(\hurw)}\Big(\ndF \ot_R \NA^2(V_R)\Big)$. Let $\overline{W}=\ndF G\overline{W}_\hurw$. Then 
\[ \dim \overline{W}=|G|/|\mathrm{Stab}_G(\hurw)|\ge |G|/|C_G(g)|=|\prescript{G}{}{g}|\ge 3,\]
where the first inequality follows from $\mathrm{Stab}_G(\hurw)\subseteq C_G(g)$, and the second one from the assumption.
Since $\overline{W}_\hurw\subseteq \ndF\ot_R \NA^2(V_R)_g$, $\overline{W}$ is a Yetter-Drinfeld submodule of $\ndF\ot_R \NA^2(V_R)$ consisting of primitive elements. Moreover, $\supp \overline{W}=\prescript{G}{}{g}$. Let $W$ be a simple Yetter-Drinfeld submodule of $\overline{W}$. Then $\supp W=\supp \overline{W}$ is of size at least three. In particular, $\supp V$ and $\supp W$ do not commute. 
Since $\supp V_R$ generates $G$ and $|\supp V_R|\ge 3$,  Proposition~\ref{pro:consequence} tells that 
  \[
  \dim\NA( (\ndF \ot _R V_R)\oplus W)=\infty.
  \]
  Then 
  Lemma~\ref{lem:specialprimitives} applies, and hence $\NA (V)$ is infinite-dimensional.  
\end{proof}

\subsection{On the cocycles}\label{subsec:cocycles-examples}

Here we narrow down the possible cocycles attached to the racks in
Subsection~\ref{subsec:main-examples} that could give rise to finite-dimensional Nichols algebras.

\begin{lem} 
\label{lem:T} 
Let $G$ be a group as in Example~\ref{exa:T}, and let
$V=\fie  \tetra$ with basis $\{v_1,v_2,v_3,v_4\}$
and with $\fie G$-module structure given by Table~\ref{tab:T}, where $\epsilon \in \{1,-1\}$ and $q\ne 0$.
Assume that there exist $\zeta,r\in \fiet$ such that $q=\zeta r$, $\zeta^3=1$,
the order of $r$ is a power of two, and $\epsilon r\ne -1$. 
Then $\dim\NA(V)=\infty$.
\end{lem}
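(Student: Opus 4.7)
The plan is to apply Lemma~\ref{lem:concrete_new_trick} to a Hurwitz orbit of odd length in $\tetra^2$. As a preliminary step, let $R\subseteq\fie$ be a Dedekind domain containing $\zeta$, $r$, $\epsilon$ and $q^{-1}$; concretely, take the localization at a prime ideal above~$2$ of the ring of integers of $\mathbb Q(\zeta,r)$. Since $q=\zeta r$ is a unit in $R$ and every structure constant in Table~\ref{tab:T} lies in $R^\times$, the module $V_R=\bigoplus_{i=1}^{4}Rv_i$ is an $R$-order of $V$ in the sense of~\S\ref{subsec:orders}.

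Next, I single out the orbit $\hurw=\{(g_1,g_2),(g_4,g_1),(g_2,g_4)\}$, which is readily seen from Table~\ref{tab:T} to be a Hurwitz orbit of length $|\hurw|=3$ with $G$-degree $g=g_1g_2$. Since $g_iv_i=qv_i$ for every $i$, the formula in Lemma~\ref{lem:Hurwitz} applies with $\lambda=q$. A direct iteration based on the table yields
\[ gv_1=q^2v_3,\qquad g^2v_1=q^4v_2,\qquad g^3v_1=\epsilon q^6 v_1, \]
so that $\nu=\epsilon q^6$ and hence
\[ \det\delta \;=\; 1+\epsilon q^6\,q^{-3} \;=\; 1+\epsilon q^3 \;=\; 1+\epsilon r^3, \]
where the last equality uses $\zeta^3=1$.

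The bulk of the argument is then to verify that $1+\epsilon r^3$ is a nonzero element of some maximal ideal $\fm$ of $R$ above~$2$. Non-vanishing follows from the assumption $\epsilon r\ne -1$: if $\epsilon r^3=-1$, then $r^3\in\{\pm 1\}$, and because $\ord r$ is a power of two this forces $r\in\{\pm 1\}$, whence $\epsilon r=\epsilon r^3=-1$, a contradiction. For membership in $\fm$, if $r=\pm 1$ then $1+\epsilon r^3=\pm 2$; otherwise $r$ is a primitive $2^n$-th root of unity with $n\geq 2$, and $\pm r^3$ is again a primitive $2^n$-th root of unity, so $1\mp r^3$ generates a prime ideal above $2$ in $\mathbb Z[r]$. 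Replacing $R$ by its localization at such a prime, we achieve $1+\epsilon r^3\in\fm\setminus\{0\}$.

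It only remains to check the other hypotheses of Lemma~\ref{lem:concrete_new_trick}: $|\supp V|=|\tetra|=4\geq 3$ and $\dim V_x=1$ are built in, and the canonical surjection $G\twoheadrightarrow\Inn\tetra\simeq\AG 4$ sends $g=g_1g_2$ to the product of two $3$-cycles in a single $\AG 4$-conjugacy class, namely to a $3$-cycle in the other class of cardinality~$4$; therefore $|\prescript{G}{}{g}|\geq 4\geq 3$. By Lemma~\ref{lem:Hurwitz}(b), $(1+g^{3}x^{-3})v_x=(\det\delta)\,v_x\in\fm V_x\setminus\{0\}$ for any $x\in\supp\hurw$ and the corresponding basis vector $v_x\in V_R$, so Lemma~\ref{lem:concrete_new_trick} concludes that $\dim\NA(V)=\infty$. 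The main technical step is the explicit computation of $g^3v_1$ from Table~\ref{tab:T}; the cyclotomic estimate on $1+\epsilon r^3$ and the appeal to $\AG 4$ are short verifications.
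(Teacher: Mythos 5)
Your proof is correct and follows essentially the same route as the paper's: the same length-three Hurwitz orbit of $G$-degree $g=g_1g_2$, reduction modulo a maximal ideal above $2$, the identity $1+\epsilon q^{3}=1+(\epsilon r)^{3}$, and the conclusion via Lemma~\ref{lem:concrete_new_trick} (you merely compute $g^{3}v_1$ by iterating the module action where the paper rewrites $g^{3}$ with the rack relations, and you spell out the membership $1+\epsilon r^{3}\in\fm$ that the paper leaves implicit). Only the intermediate degree labels in your iteration are off --- one has $gv_1=q^{2}v_{2}$ and $g^{2}v_1=q^{4}v_{4}$, not $q^2v_3$ and $q^4v_2$ --- but the final scalar $\epsilon q^{6}$ and hence $\det\delta=1+\epsilon q^{3}$ are right.
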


\begin{proof}
 Let $x_1,x_2,x_3,x_4\in G$ be such that $\delta(v_i)=x_i\otimes v_i$ for all $1\le i\le 4$.  Let $f\colon \supp V\to \prescript{\AG 4}{}{(123)}$ be a rack isomorphism with $f(x_1)=(123)$ and $f(x_2)=(134)$. The map $f$ induces
 a surjective group homomorphism 
 $f\colon G\to \AG 4$. 
 Let us consider the Hurwitz orbit 
 \[
 \{(x_1,x_2),(x_4,x_1),(x_2,x_4)\}\subseteq (\supp V)^2
 \]
 of $G$-degree $g=x_1x_2$ and length three. Let $R=\ndZ[\zeta, r]$ and $\fm$ be a maximal ideal of $R$ containing $2$. 
 Then $\sum_{i=1}^4 Rv_i$ is an $R$-order of $V$.  
 Since the conjugacy class of $f(g)=(234)$ in $\AG 4$ has four elements, we conclude that $|\prescript{G}{}{g}|\ge 4$.

 The rack structure of $\supp V$ implies that
 \begin{align*} g^3&=(x_1x_2)^3=x_1x_2x_1x_2x_1x_2\\
 &=x_1x_1x_3x_2x_1x_2
 =x_1^2x_1x_3x_1x_2
 =x_1^3x_1x_4x_2
 =x_1^4x_2x_3. 
 \end{align*}
 Hence
 \[ (1+g^3x_1^{-3})v_1=(1+q^3\epsilon )v_1
 =(1+(r\epsilon)^3)v_1.\]
 Note that $1-r\epsilon+(r\epsilon)^2\ne 0$, and hence $1+(r\epsilon)^3\ne 0$ since $r\epsilon\ne -1$ by assumption. Therefore Lemma~\ref{lem:concrete_new_trick} applies and $\dim \NA (V)=\infty $.
\end{proof}

\begin{lem}
\label{lem:Tcubic} 
 Let $p$ be an odd prime, let $G$ be a group as in Example~\ref{exa:T}, and let
$V=\fie  \tetra$ with basis $\{v_1,v_2,v_3,v_4\}$
and with $\fie G$-module structure given by Table~\ref{tab:T}, where $-q$ has order $p^m$ for some $m\geq1$. Then $\dim\NA(V)=\infty$.
\end{lem}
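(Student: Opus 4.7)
I plan to follow the template of Lemma~\ref{lem:T}: apply Lemma~\ref{lem:concrete_new_trick} to the same length-three Hurwitz orbit
\[
\hurw = \{(x_1,x_2),(x_4,x_1),(x_2,x_4)\}
\]
of $G$-degree $g = x_1 x_2$. The analysis establishing $|\prescript{G}{}{g}| \geq 4$ via the quotient to $\AG 4$ and the rack-theoretic identity $g^3 = x_1^4 x_2 x_3$ carries over verbatim, so Lemma~\ref{lem:Hurwitz} still yields
\[
(1+g^3 x_1^{-3})v_1 = (1 + \epsilon q^3)v_1.
\]
Setting $R = \ndZ[q]$ and $V_R = \sum_i R v_i$, the task is to produce a maximal ideal $\fm$ of $R$ with $1+\epsilon q^3 \in \fm \setminus \{0\}$, and then invoke Lemma~\ref{lem:concrete_new_trick}.

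Since $-q$ has order $p^m$ we have $q^{p^m} = -1$ in $R$, and in characteristic $p$ the identity $x^{p^m}+1 = (x+1)^{p^m}$ forces $q \equiv -1 \pmod{\fm}$ for any $\fm$ above $p$; hence $1+\epsilon q^3 \equiv 1-\epsilon \pmod{\fm}$. For $\epsilon = 1$ this vanishes modulo $\fm$, and a cyclotomic norm computation on $1+q^3 = 1-(-q^3)$ (whose norm equals $\Phi_n(1) = p$ for $n$ a power of $p\ne 2$) shows that $1+q^3 \ne 0$ in $R$ except in the borderline case $p=3,m=1$. Thus Lemma~\ref{lem:concrete_new_trick} concludes the argument for $\epsilon=1$ outside this borderline. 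For $\epsilon=-1$ with $p=3,m=1$, the value $1-q^3 = 2$ lies explicitly in any $\fm$ above $2$ while being nonzero in $R$, so Lemma~\ref{lem:concrete_new_trick} applies once more.

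In the remaining subcases---the borderline $\epsilon=1,\,p=3,\,m=1$ (where $1+q^3$ vanishes in $R$) and $\epsilon=-1$ with either $p\ne 3$ or $m\ge 2$ (where the cyclotomic norm of $1-q^3$ equals $1$, so $1-q^3 \in R^{\times}$)---Lemma~\ref{lem:concrete_new_trick} cannot be used directly. My approach is to reduce $V$ modulo a maximal ideal $\fm$ above $2$: the reduced braided vector space $V_{\ndF}$ has parameters $(\epsilon,q) \equiv (1,\bar q)$ in characteristic two, where $\bar q$ is a primitive $p^m$-th root of unity distinct from $1$ and from any primitive cube root of unity. Hence $V_{\ndF}$ does not match either of the two finite-dimensional tetrahedral Nichols algebras listed in Example~\ref{exa:T}, and by Lemma~\ref{lem:orders} it suffices to prove $\dim \NA(V_{\ndF}) = \infty$. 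This final characteristic-two infiniteness statement is the hardest step: I expect it to follow either from an independent Hurwitz-orbit calculation in characteristic two (using a further reduction to a residue field where $1+\bar q^k$ vanishes for some suitable $k$) or from the positive-characteristic classification in Section~\ref{sec:prime-char-solvable}, which excludes such parameters from the finite-dimensional list.
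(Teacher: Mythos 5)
There is a genuine gap, and it sits exactly at the subcase for which the lemma is actually invoked in the proof of Theorem~\ref{thm:char_zero}. Your length-three Hurwitz orbit yields $1+\epsilon q^3$, and you correctly observe that this vanishes identically when $\epsilon=1$, $p=3$, $m=1$ (there $q=-\zeta$ with $\zeta$ a primitive cube root of unity, so $q^3=-1$). But your fallback for that subcase fails: reducing modulo a maximal ideal above $2$, the parameter $\bar q=\overline{-q}=\bar\zeta$ \emph{is} a primitive cube root of unity in characteristic two, contrary to your parenthetical claim, so $V_{\ndF}$ coincides with the braided vector space of Example~\ref{exa:T} with $(\epsilon,q)=(-1,\omega)\equiv(1,\omega)$, whose Nichols algebra is finite-dimensional ($5184$). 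Lemma~\ref{lem:orders} only transfers \emph{infinite}-dimensionality of the reduction back to characteristic zero, so this reduction proves nothing; reductions at other primes fare no better (above $3$ one lands on the $72$-dimensional example). Since case (d) of the proof of Theorem~\ref{thm:char_zero} is precisely $\epsilon=1$, $q=-\zeta$, $\zeta^3=1$, $\zeta\ne1$, this is the one subcase that cannot be left unresolved. (Your remaining $\epsilon=-1$ subcases could in principle be closed by citing Theorem~\ref{thm:char_p} in characteristic $2$, but as written this is only sketched.)

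The paper avoids all of this by choosing a different Hurwitz orbit: the length-one orbit $\{(x_1,x_1)\}$ of $G$-degree $h=x_1^2$, whose image $(132)$ in $\AG{4}$ has a conjugacy class of size $4$, so $|\prescript{G}{}{h}|\ge 4$. Lemma~\ref{lem:Hurwitz} then produces $(1+hx_1^{-1})v_1=(1+q)v_1$, and $1+q=1-(-q)$ with $-q$ a primitive $p^m$-th root of unity: this is nonzero and has norm $\Phi_{p^m}(1)=p$, hence lies in a maximal ideal $\fm$ of $R=\ndZ[q]$ above $p$. Lemma~\ref{lem:concrete_new_trick} then applies uniformly for every odd $p$, every $m\ge1$ and both values of $\epsilon$, with no case analysis. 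If you want to salvage your argument, replace the length-three orbit by this length-one orbit (or add it as a second orbit to cover the subcases where $1+\epsilon q^3$ is zero or a unit).
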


\begin{proof}
Let $x_1,x_2,x_3,x_4\in G$ be such that $\delta(v_i)=x_i\otimes v_i$,  
$1\le i\le 4$. Let $f\colon \supp V\to \prescript{\AG 4}{}{(123)}$ be a rack isomorphism with $f(x_1)=(123)$. The map $f$ induces
 a surjective group homomorphism 
 $f\colon G\to \AG 4$. 
 Consider the Hurwitz orbit
 $\{(x_1,x_1)\}\subseteq (\supp V)^2$ of $G$-degree $h=x_1^2$ and length one. 
 Since the conjugacy class of $f(h)=(132)$ in $\AG 4$ has
size $4$, we get that $|\prescript{G}{}{h}|\geq 4$.
By Lemma~\ref{lem:concrete_new_trick} with $R=\ndZ[q]$, a maximal ideal $\fm$ of $R$ containing $p$, 
and the $R$-order $\sum_{i=1}^4 Rv_i$ of $V$, it follows that  
 $\dim \NA (V)=\infty $ since $1+q\ne 0$.
\end{proof}

\begin{lem}
\label{lem:4cyclesS4}
 Let $\Oc_4^4$ be the conjugacy class of 4-cycles in $\SG 4$ and $G$ be a group having a conjugacy class $X$ 
 isomorphic as a rack to $\Oc_4^4$ and generating $G$.
 Let $x\in X$ and let $V=M(x,\rho)$ with an irreducible representation $\rho$ of $C_G(x)$.
 Assume that $\rho(x)$ is a root of one of order $2^m$ for some $m\geq0$. 
 Then $\dim\NA(V)<\infty$ if and only if $\rho(z)=-1$ for all $z\in C_G(x)\cap X$.
\end{lem}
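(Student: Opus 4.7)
The plan is to adapt the Hurwitz-orbit strategy used in Lemmas~\ref{lem:T} and~\ref{lem:Tcubic}. First I fix some structure: since $X$ generates $G$, Example~\ref{exa:inner} gives $G/Z(G)\simeq\Inn X\simeq\SG 4$. Because the $\SG 4$-centralizer of the image $\bar x$ is the cyclic group $\langle\bar x\rangle$, $C_G(x)$ is a central extension of a cyclic group, hence abelian. Therefore $\rho$ is one-dimensional, $\dim V_x=1$, and by Remark~\ref{rem:quandles6} we have $C_G(x)\cap X=\{x,h\}$, where $h$ is the unique element of $X\setminus\{x\}$ commuting with $x$. The lemma reduces to showing that $\rho(x)=-1$ and $\rho(h)=-1$.

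For the only-if direction, assume $\dim\NA(V)<\infty$. Apply Lemma~\ref{lem:concrete_new_trick} to the length-$1$ Hurwitz orbit $\{(x,x)\}$, whose $G$-degree $x^2$ has image $(13)(24)\in\SG 4$ with conjugacy class of size $3$, so $|\prescript{G}{}{x^2}|\geq 3$. Lemma~\ref{lem:Hurwitz} gives the scalar $1+\rho(x)$; with $R=\ndZ[\rho(x)]$ and a maximal ideal $\fm\ni 2$, the assumption that $\rho(x)$ has $2$-power order places $1+\rho(x)$ in $\fm\setminus\{0\}$ unless $\rho(x)=-1$, and Lemma~\ref{lem:concrete_new_trick} forces the latter. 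Next, I pick $y\in X$ not commuting with $x$ such that $y^2\triangleright x=h$; under $X\simeq\Oc^4_4\subset\SG 4$ with $\bar x=(1234)$ one can take $\bar y=(1324)$ since $(12)(34)(1234)(12)(34)=(1432)=\bar h$. The Hurwitz orbit of $(x,y)$ has length $3$ and $G$-degree $g=xy$, a $3$-cycle in $\SG 4$ (so $|\prescript{G}{}{g}|\geq 8$), and the iterated rack identity $(xy)^3=x\cdot(y\triangleright x)\cdot(y^2\triangleright x)\cdot y^3$ gives
\begin{align*}
g^3x^{-3}=x\cdot(y\triangleright x)\cdot h\cdot y^3x^{-3}\in C_G(x).
\end{align*}
This element maps to $\bar x$ in $\SG 4$, so equals $x\cdot c$ with $c\in Z(G)$. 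A rack-theoretic simplification using $(y\triangleright x)\triangleright y=x$ together with the relation $(xh^{-1})^4=1$ inherited from $G_X$ (Remark~\ref{rem:quandles6}) expresses $\rho(c)$ as $\rho(h)$ multiplied by a factor whose order is a power of $2$. Thus $(\det\delta)v_x=(1-\rho(c))v_x$; if $\rho(h)\ne -1$, choosing a prime $p$ dividing the order of $\rho(h)$ and a maximal ideal $\fm\ni p$ of $\ndZ[\rho(h)]$ places $1-\rho(c)$ in $\fm\setminus\{0\}$, and Lemma~\ref{lem:concrete_new_trick} yields $\dim\NA(V)=\infty$, a contradiction.

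For the if direction, assume $\rho(x)=\rho(h)=-1$. Choose, for each $y\in X$, a representative $\tilde g_y\in G$ with $\tilde g_y x\tilde g_y^{-1}=y$ whose image in $\SG 4$ is an involution (such involution lifts exist for every $y\in X$; for instance, take $\tilde g_x=e$). Then $\tilde g_y^2\in Z(G)$, and the cocycle $q_{y,z}=\rho(\tilde g_{y\triangleright z}^{-1}y\tilde g_z)$ is verified by direct computation in $C_G(x)$ to be cohomologous to the constant cocycle $-1$, using $\rho(x)=\rho(h)=-1$. Example~\ref{exa:S4b} then gives $\dim\NA(V)=\dim\NA(\Oc^4_4,\epsilon)=576$. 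The main obstacle is the computation of $\rho(c)$ in terms of $\rho(h)$ in the only-if direction: the rack-theoretic simplification demands careful bookkeeping with the enveloping-group relations, and the prime $p$ and maximal ideal $\fm$ must be selected to accommodate every possible order of $\rho(h)$. Once this is handled, the argument closes exactly as in Lemmas~\ref{lem:T} and~\ref{lem:Tcubic}.
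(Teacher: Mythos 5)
Your strategy coincides with the paper's: a length-one Hurwitz orbit $\{(x,x)\}$ to force $\rho(x)=-1$, a length-three orbit through $(x,y)$ to force $\rho(h)=-1$, both fed into Lemma~\ref{lem:concrete_new_trick} over a maximal ideal containing $2$, and Example~\ref{exa:S4b} for the converse. Your choice of $y$ with $\bar y=(1324)$ even produces the same orbit $\{(x_2,x_1),(x_3,x_2),(x_1,x_3)\}$ as the paper, and the first half of the only-if direction is correct.

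The gap is exactly the step you flag as ``the main obstacle'', and it is not a bookkeeping issue but the mathematical core of the lemma. Knowing only that $g^3x^{-3}=xc$ with $c$ central and $\rho(c)$ equal to $\rho(h)$ times \emph{some} element of $2$-power order is not enough to conclude. Indeed $\rho(h)=\rho(hx^{-1})\rho(x)$ with $\rho(hx^{-1})^4=1$, so $\rho(h)$ itself has $2$-power order and $\det\delta=1+\rho(g^3x^{-3})$ lies in $\fm$ for any $\fm\ni 2$ \emph{automatically}; the whole content of the argument is deciding precisely when $\det\delta=0$ versus $\det\delta\in\fm\setminus\{0\}$, and that requires the exact expression of $g^3x^{-3}$ inside $\langle x,h\rangle$, not merely the order of its image under $\rho$. (If the unknown $2$-power factor had happened to absorb $\rho(h)$, the orbit would say nothing about $\rho(h)$ at all.) The paper supplies this by an explicit chain of rack relations, in particular $x_2x_4=x_1x_5$, yielding $g^3=x_1^4x_5x_1$, hence $\det\delta=1+\epsilon q^3$, which vanishes exactly when $q=-\epsilon$, i.e.\ $\rho(h)=-1$ once $q=-1$. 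Relatedly, your plan to ``choose a prime $p$ dividing the order of $\rho(h)$'' misreads the situation: only $p=2$ can occur, and the problem is nonvanishing of the determinant, not membership in $\fm$. A smaller but real omission: in the if-direction the identification of $V$ with the module of Example~\ref{exa:S4b} is asserted (``verified by direct computation'') rather than carried out; the clean route is that $C_{G_X}(e_x)=\langle e_x,e_xe_h^{-1}\rangle$, so the braided vector space is determined by the pair $(\rho(x),\rho(hx^{-1}))$, and $(-1,1)$ gives the constant cocycle $-1$.
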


\begin{proof}
 Let $x_1,\dots,x_6\in X$ be such that $f\colon X\to \Oc_4^4$, 
 \begin{align*}
 &x_1\mapsto (1234),\quad 
 x_2\mapsto (1243),\quad 
 x_3\mapsto (1324),\\
 &x_4\mapsto (1342),\quad 
 x_5\mapsto (1432),\quad 
 x_6\mapsto (1423),
 \end{align*}
 is a rack isomorphism. We may assume that $x=x_1$. Then $C_G(x_1)\cap X=\{x_1,x_5\}$. 
 Moreover, $f$ induces an epimorphism $f\colon G\to \SG 4$.
 The rack structure of $X$ implies the following relations in $G$:
 \begin{align*} x_1x_3&=x_2x_1=x_3x_2, &
 x_2x_4&=x_4x_2,
 \\
 x_1x_4&=x_3x_1=x_4x_3, &
 x_1x_5&=x_5x_1,
 \\
 x_3x_4&=x_4x_5=x_5x_3, &
 x_3x_6&=x_6x_3.
 \end{align*}
 
 By Remark~\ref{rem:quandles6}, $C_G(x_1)=\langle x_1,x_5\rangle$. Moreover, $(x_5x_1^{-1})^4=1$.
 Thus $\dim V_{x_1}=1$. Let  $q=\rho(x_1)$ and $\epsilon=\rho(x_5x_1^{-1})$. Then $\epsilon^4=1$ and
 \[ x_1v=qv,\quad x_5v=\epsilon qv, \]
 where $v$ is a basis of $V_{x_1}$.
 
 If $m=0$, then $\fie[v]\subseteq \NA (V)$ and hence $\dim \NA (V)=\infty$.

 \smallbreak
 Assume now that $m\geq1$. The above relations in $G$ imply that
 \[ 
 \hurw=\{ (x_2,x_1), (x_3,x_2), (x_1,x_3) \} 
 \]
 is a Hurwitz orbit of $G$-degree $g=x_2x_1$. Since the conjugacy class of \[
 f(g)=f(x_2)f(x_1)=(142)
 \]
 in $\SG 4$ has size eight, we obtain that $|\prescript{G}{}{g}|\ge 8$.
Now note that
 \[ x_3x_2x_4=x_1x_3x_4=x_1x_4x_5=x_3x_1x_5, \]
 and hence $x_2x_4=x_1x_5$ in $G$. Therefore 
 \[ g^3=(x_2x_1)^3
 =x_1x_3x_2x_1x_2x_1
 =x_1x_1x_3x_1x_2x_1
 =x_1^2x_1x_4x_2x_1=x_1^4x_5x_1. \]
 We obtain that
 \[ (1+g^{|\hurw|}x_1^{-|\hurw|})v=(1+x_1^2x_5)v=(1+\epsilon q^3)v=(1-(-\epsilon^{-1}q)^3)v. \]
 Let $R=\ndZ[q,\epsilon]$ and $\fm$ be a maximal ideal of $R$ containing two. Then 
 $\sum_{h\in G}Rhv$ is an $R$-order of $V$ and 
 \[
 (1+g^{|\hurw|}x_1^{-|\hurw|})v\in \fm V_{x_1}.
 \]
 Since $1-\epsilon^{-1}q+\epsilon^{-2}q^2\ne 0$, it also follows that
 \[
 (1+g^{|\hurw|}x_1^{-|\hurw|})v=0 \quad \Leftrightarrow \quad
 q=-\epsilon.
 \]
Hence Lemma~\ref{lem:concrete_new_trick} implies that if $q\ne -\epsilon $ then $\dim \NA (V)=\infty $.

Consider now the Hurwitz orbit
$\{(x_1,x_1)\}$ of $G$-degree $h=x_1^2$ and length one. 
Since the conjugacy class of $f(h)=(13)(24)$ in $\SG 4$ has
size $3$, we get that $|\prescript{G}{}{h}|\geq 3$. 
By Lemma~\ref{lem:concrete_new_trick} with the above maximal ideal $\fm$, if $1+q\ne 0$ then $\dim \NA (V)=\infty $.

By the last two paragraphs, if $q\ne -1$ or $q\ne -\epsilon$, then $\dim \NA (V)=\infty $. Conversely, if $q=-1$ and $q=-\epsilon$, then $\epsilon=1$ and $\dim \NA (V)<\infty $ by Example~\ref{exa:S4b}.
\end{proof}

\subsection{Nichols algebras over groups whose solvable radical is different from the hypercenter}
Throughout this subsection,  $G$ denotes a finite group.  
Recall 
that the \emph{Fitting subgroup} $F(G)$ of $G$ is the 
largest normal nilpotent subgroup of $G$ (see Subsection~\ref{subsec:Fitting}). 
The \emph{solvable radical} $\operatorname{Rad}(G)$ of $G$ is
the (unique) largest solvable normal subgroup of $G$. 
 
\begin{rem}\label{rem:Fitting}
\begin{enumerate}[leftmargin=*,label=\rm{(\alph*)}]
 
\item For any solvable  $N \lhd G$, $\Rad(G/N)=\Rad(G)/N$.

\medbreak
\item If $G$ is solvable and non-trivial, then $F(G)$ is non-trivial. 

\medbreak
\item If $N \lhd G$, then  $N\cap F(G)=F(N)$. 
In particular, if $\Rad(G)$ is non-trivial, then  $F(G)$ is non-trivial.
\end{enumerate}
\end{rem}

\begin{rem}\label{rem:ringforrep}
\begin{enumerate}[leftmargin=*,label=\rm{(\alph*)}]
\item\label{item:order1} Let $V$ be a finite-dimensional simple $\fie G$-module. Then there are a subring $R$ of $\fie$ with $R^\times \cap \ndZ=\{-1,1\} $ and a projective $RG$-submodule $V_R$ of $V$ such that $V\simeq \fie\ot_R V_R$. Indeed, let $v\in V \backslash \{0\}$ and let $B\subseteq \{gv\mid g\in G\}$ be a 
$\fie$-basis of $V$.
Then there is a number field $L\subseteq \fie$ such that $gv\in LB$ for all $g\in G$. The ring of integers $R=\Oc_L$ of $L$ is a Dedekind domain with $R^\times \cap \ndZ=\{-1,1\}$, and $RGv=\sum _{g\in G}Rgv$ is a finitely generated torsion-free (hence projective) $RG$-submodule of $V$ with $\fie\ot_R RGv\simeq V$.

\medbreak
\item Let $V$ be a simple Yetter-Drinfeld module over $\fie G$. Then there are a subring $R$ of $\fie$ with $R^\times \cap \ndZ=\{-1,1\} $ and an $R$-order $V_R$ of $V$. Indeed, there exist $g\in G$ and a simple $\fie C_G(g)$-module $U$ such that $V\simeq M(g,U)$. By 
\ref{item:order1}, there exist a subring $R$ of $\fie$ and a projective $RC_G(g)$-module $U_R$ such that $U\simeq \fie\ot_R U_R$.
 Then $V_R = M(g,U_R)$ is an $R$-order of $V$.
\end{enumerate}
\end{rem}

\begin{rem}
\label{rem:roots_of_1}
Let $p$ be a prime and $R$ be a subring of $\fie$ not containing $p^{-1}$. Let $q\in R$ be a primitive root of $1$ of order $mp^l$, where $\gcd(m,p)=1$ and $l\ge 0$. Let $\fm $ be a maximal ideal of $R$ containing $p$ and $\ndF = R/\fm$. 
Since the polynomial $t^m-1\in \ndF[t]$ is separable, it follows that $\mathrm{ord} (q\ot_R \ndF)=m$, and there exists a root of unity $\zeta \in q^{\ndZ}\subseteq R$ of order $m$ such that $q\zeta ^{-1}$ is a primitive root of 1 of order $p^l$.
\end{rem}

We can now prove the main result of this paper.

\begin{thm}
\label{thm:char_zero}
Let $G$ be a finite group such that $\Rad(G)\ne Z^*(G)$.
Let $V$ be a Yetter-Drinfeld module over $\fie G$ such that 
$\supp V$ is a conjugacy class of $G$ generating $G$, and that $\dim\NA(V)<\infty$.
Then $V$ is absolutely simple  
and isomorphic to one of the Yetter-Drinfeld modules
of Examples~\ref{exa:affine}, \ref{exa:T}, \ref{exa:S4a} and \ref{exa:S4b}. 
\end{thm}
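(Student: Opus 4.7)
The plan is to reduce the problem to positive characteristic via a mod-$p$ reduction of an $R$-order, apply the classification of Theorem~\ref{thm:char_p}, and then rule out the ``extra'' cocycles that survive in characteristic $0$ by means of the Hurwitz-orbit obstruction of Lemma~\ref{lem:concrete_new_trick} together with the rack-specific calculations of Subsection~\ref{subsec:cocycles-examples}.

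The first step is to verify the hypothesis of Theorem~\ref{thm:char_p}. Since the hypercenter is nilpotent, $Z^*(G)\leq\Rad(G)$ holds always, so the assumption $\Rad(G)\ne Z^*(G)$ makes $\Rad(G)/Z^*(G)$ a non-trivial normal solvable subgroup of $G/Z^*(G)$; by Remark~\ref{rem:Fitting} the Fitting subgroup of $G/Z^*(G)$ is non-trivial, so there exists a prime $p$ with $O_p(G/Z^*(G))\ne\{e\}$. Moreover $G$ is non-abelian (else $G=\Rad(G)=Z^*(G)$), whence $V$ is absolutely simple by Corollary~\ref{cor:absolutely_irreducible}.

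The second step is the reduction itself. By Remark~\ref{rem:ringforrep} there is an $R$-order $V_R$ of $V$ over a Dedekind domain $R\subseteq\fie$ with $R^\times\cap\ndZ=\{\pm1\}$. Choosing a maximal ideal $\fm$ of $R$ with $p\in\fm$ and setting $\ndF=R/\fm$, Lemma~\ref{lem:orders} gives a finite-dimensional Nichols algebra $\NA(V_{\ndF})$, and $\supp V_{\ndF}=\supp V$ is still a conjugacy class of $G$ generating $G$. Applying Theorem~\ref{thm:char_p} identifies $\supp V$ as a rack with one of $\Aff(\ndF_p,d)$ (with $p=\dim V_{\ndF}$) from \eqref{eq:exa-affine}, the tetrahedron rack $\tetra$, $\Oc^4_2$ or $\Oc^4_4$ (the last three only for $p=2$), and fixes $\dim V_g$ for $g\in\supp V$ as well as the reductions modulo $\fm$ of the eigenvalues of the centralizer action on $V_g$.

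The third and hardest step is to promote the mod-$\fm$ data to characteristic $0$. Writing $V\cong M(g,\rho)$, each eigenvalue $\rho(z)\in\fie^{\times}$ for $z\in C_G(g)$ is a root of unity, and by Remark~\ref{rem:roots_of_1} factors uniquely as a prime-to-$p$ root (determined by $V_{\ndF}$) times a $p$-power root (invisible to the reduction). The plan is to exclude all $p$-power factors except those appearing in Examples~\ref{exa:affine}, \ref{exa:T}, \ref{exa:S4a} and \ref{exa:S4b} by invoking Lemma~\ref{lem:concrete_new_trick}: for a suitably chosen Hurwitz orbit $\hurw\subseteq(\supp V)^2$ of odd length, the element $1+g^{|\hurw|}x^{-|\hurw|}$ is non-zero in $\fie$ but lies in $\fm V_x$ precisely when the $p$-power factor is of a forbidden shape, forcing $\dim\NA(V)=\infty$. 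Lemmas~\ref{lem:T}, \ref{lem:Tcubic} carry this out for $\tetra$ and Lemma~\ref{lem:4cyclesS4} for $\Oc^4_4$; the affine case and $\Oc^4_2$ are handled by the same technique (sometimes reducing at a second prime, e.g. $p=2$, to eliminate spurious $2$-power eigenvalues not controlled by the first reduction). What remains is exactly the list of modules in the statement.

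The main obstacle is this final elimination: Theorem~\ref{thm:char_p} pins down the rack and the $p$-regular part of $\rho$, but leaves the $p$-singular part entirely free, and in characteristic $0$ there are infinitely many a priori candidates. The substance of the proof consists in identifying, for each of the four rack types, a Hurwitz orbit whose associated scalar $1+g^{|\hurw|}x^{-|\hurw|}$ is arithmetically sensitive enough (non-zero but congruent to zero modulo $\fm$) to detect exactly the allowed cocycles; once such an orbit is exhibited, Lemma~\ref{lem:concrete_new_trick} closes the case.
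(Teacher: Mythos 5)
Your proposal follows essentially the same route as the paper: verify via Remark~\ref{rem:Fitting} that $O_p(G/Z^*(G))$ is non-trivial for some prime $p$, get absolute simplicity from Corollary~\ref{cor:absolutely_irreducible}, reduce an $R$-order modulo a maximal ideal containing $p$, apply Theorem~\ref{thm:char_p} to pin down the rack and the $p$-regular part of the cocycle, and then kill the remaining $p$-power eigenvalue ambiguity with Lemma~\ref{lem:concrete_new_trick} via Lemmas~\ref{lem:T}, \ref{lem:Tcubic} and \ref{lem:4cyclesS4}. The only (minor) deviations from the paper's case analysis are that for $|\supp V|\in\{3,5,7\}$ the paper simply cites the earlier prime-size classification rather than redoing the Hurwitz computation, and for $\Oc^4_2$ it does not use a Hurwitz orbit at all but instead the rank-two diagonal Cartan-type subspace $V_g\oplus V_h$ spanned by two commuting transpositions together with the rank-two classification of diagonal braidings; your Hurwitz-orbit alternative for $\Oc^4_2$ is nonetheless viable (e.g.\ the length-three orbit generated by $((23),(12))$ forces $q=-1$).
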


\begin{proof}
Since $\Rad(G)\ne Z^*(G)$, Remark~\ref{rem:Fitting} implies 
that $|F(G/Z^*(G))|>1$. 
Let $p$ be a prime divisor of $\left|F\left(G/Z^*(G) \right) \right|$. 
Since $\supp V$ is a conjugacy class of $G$, $V\simeq  M(g,\rho)$ for a unique representation $\rho$ of $C_G(g)$. Since $\dim V<\infty$, 
$V$ is absolutely simple by Corollary~\ref{cor:absolutely_irreducible}. By Remark \ref{rem:ringforrep}, there is a subring $R$ of $\fie$ with $p\notin R^\times $ and a Yetter-Drinfeld order $V_R$ of $V$. Let $g\in \supp V$ and let 
\[
\rho_R\colon RC_G(g)\to \Aut ( (V_R)_g),
\quad h\mapsto (v\mapsto hv).
\]
Then $V_R\simeq  M(g,\rho_R)$.
Let $\fm$ be a maximal ideal of $R$ such that $p \in \fm$, 
and let $\ndF = R/ \fm$ and 
$V_{\ndF}= \ndF\ot _R V_R$. 
Then $V_{\ndF}$ is a Yetter-Drinfeld module over $\ndF G$ with support $\supp V$. Since $\NA(V)$ is  finite-dimensional, so is $\NA (V_{\ndF})$  by Lemma~\ref{lem:orders}.
Then Theorem~\ref{thm:char_p} applies:
$V_{\ndF}$ is absolutely simple 
and isomorphic to one of the Yetter-Drinfeld modules
of Examples~\ref{exa:affine}, \ref{exa:T}, \ref{exa:S4a} and \ref{exa:S4b}.
In particular, \[
\dim V_g=\dim (V_{\ndF})_g=1.
\]

If $|\supp V|\in\{3,5,7\}$, then the claim 
follows from \cite[Theorem 1.6]{MR4729697}.

Assume now that $|\supp V|=4$. We may assume $V$ has
basis $v_1,v_2,v_3,v_4$, where $\deg v_i=x_i$ for all
$i\in\{1,2,3,4\}$, and the action of $\fie G$ on $V$ is given by Table~\ref{tab:T}, where $\epsilon\in\{-1,1\}$ and 
$q\in \fie $ is such that 
$\ndF \ot  _R q^3=1_{\ndF}$.
By Remark~\ref{rem:roots_of_1}, there exist $\zeta ,r\in R$ such that $\zeta^3=1$, $q=\zeta r$, and $r^{2^m}=1$ for some $m\geq0$. 

There are five cases to consider:
\begin{enumerate}[leftmargin=*,label=\rm{(\alph*)}]

 \item If $m\geq2$ or $r=\epsilon$, then 
  $\dim\NA(V)=\infty$ by Lemma~\ref{lem:T}. 

\medbreak
 \item If $r=-\epsilon$, $\epsilon=1$, and $\zeta=1$, then  $V$ is isomorphic to the Yetter-Drinfeld module of Example~\ref{exa:T} with
 $(\epsilon,q)=(1,-1)$. 

 \medbreak
 \item If $r=-\epsilon$, $\epsilon=-1$, and $\zeta=1$, then $q=1$ and $\fie[v_1]$ is an infinite-dimensional subalgebra of $\NA (V)$. 
 
 \medbreak
 \item If $r=-\epsilon$, $\epsilon=1$, and $\zeta^3=1$, $\zeta\ne 1$, then $\dim\NA(V)=\infty$ by Lemma \ref{lem:Tcubic} for $p=3$. 
 
 \medbreak
 \item If $r=-\epsilon$, $\epsilon=-1$, and $\zeta^3=1$, and $\zeta\ne 1$, then $V$ is isomorphic to the Yetter-Drinfeld module of Example~\ref{exa:T} with
 $(\epsilon,q)=(-1,\zeta)$. 
\end{enumerate}

\medbreak
Finally, assume that $|\supp V|=6$. Here either $\supp V=\Oc_2^4$ or $\supp V=\Oc_4^4$. In both cases, the cocycle corresponding to $V_{\ndF}$ is constant $1$, and $\car \ndF =2$. Let $g\in \supp V$.
By Remark~\ref{rem:quandles6},
$C_G(g)$ is generated by $g$ and the unique element $h\in \supp V$ with $gh=hg$ and $h\ne g$.
By Remark~\ref{rem:roots_of_1},
there exist $q,\epsilon \in \fie $ such that $q$ is a root of one of order a power of two, and $\epsilon^4=1$.
Moreover, if $\supp V=\Oc_2^4$, then $\epsilon^2=1$.
There are two cases to consider depending on the rack. 

\smallbreak
Assume first 
that $\supp V=\Oc_4^4$. By Lemma \ref{lem:4cyclesS4}, since 
$\dim\NA(V)<\infty$, 
$q=-1$ and $\epsilon=1$. Thus $V$ is isomorphic 
to the Yetter-Drinfeld module of Example~\ref{exa:S4b}. 

\smallbreak
Assume now that $\supp V=\Oc_2^4$. Then 
$V_g\oplus V_h$ is a braided subspace of $V$ of diagonal
type with braiding matrix
\[
\begin{pmatrix}
q & \epsilon q\\
\epsilon q & q
\end{pmatrix}.
\]
This braiding is of Cartan type with symmetric Cartan matrix. 
Hence, by \cite[Theorem 1]{MR2207786}, 
$\dim\NA(V_g\oplus V_h)<\infty$ if and only if
$(q^2-1)(q^3-1)=0$ and $q\ne 1$. Since the order of $q\in 2^{\ndN}$ and $\dim \NA (V)<\infty$, we see  
that $q=-1$ and $\epsilon\in\{-1,1\}$. Thus 
$V$ is isomorphic to one of the Yetter-Drinfeld modules
of Example~\ref{exa:S4a}. 
\end{proof}

\begin{lem} 
\label{lem:nilpotent}
A nilpotent group $G$ generated by  a conjugacy class $X$ is 
 cyclic.
\end{lem}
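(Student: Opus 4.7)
The plan is to reduce the claim to the abelianization of $G$ and then use a standard Frattini-subgroup argument to lift cyclicity from $G/[G,G]$ back to $G$.

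First I would fix $x\in X$ and let $\pi\colon G\to G/[G,G]$ be the canonical projection. Since $X=\prescript{G}{}{x}$ is a single conjugacy class and the target $G/[G,G]$ is abelian, conjugation acts trivially on $\pi(X)$, so $\pi(X)=\{\pi(x)\}$. Because $X$ generates $G$, $\pi(x)$ generates $G/[G,G]$, which is therefore cyclic.

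Next I would invoke the standard fact that in a nilpotent group every maximal subgroup $M$ is normal of prime index, so $G/M$ is abelian and hence $[G,G]\subseteq M$; intersecting over all maximal subgroups gives $[G,G]\subseteq \Phi(G)$. Combined with the previous step this yields $G=\langle x\rangle\,[G,G]\subseteq \langle x\rangle\,\Phi(G)$. Since $X$ is finite (the racks considered here are finite, and in any case a conjugacy class of $G$ generating $G$ produces a finitely generated group), $G$ is finitely generated, so $\Phi(G)$ consists of non-generators. Removing $\Phi(G)$ from the generating set $\{x\}\cup\Phi(G)$ leaves $G=\langle x\rangle$, so $G$ is cyclic.

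There is no genuine obstacle here; the only nontrivial ingredient is the classical inclusion $[G,G]\subseteq \Phi(G)$ for nilpotent $G$, which follows at once from the normality of maximal subgroups of a nilpotent group, together with the fact that $\Phi(G)$ is the set of non-generators in any finitely generated group.
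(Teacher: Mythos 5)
Your proof is correct, but it follows a genuinely different route from the paper's. The paper argues by induction on the nilpotency class: the image of $X$ in $G/Z(G)$ is again a generating conjugacy class, so $G/Z(G)$ is cyclic by the inductive hypothesis, and the classical fact that $G/Z(G)$ cyclic forces $G$ abelian then gives $|X|=1$ and $G=\langle x\rangle$. You instead pass directly to the abelianization to see that $G/[G,G]$ is cyclic, and then lift cyclicity back to $G$ via the inclusion $[G,G]\subseteq \Phi(G)$ (valid because maximal subgroups of a nilpotent group are normal of prime index) together with the non-generator property of $\Phi(G)$. In effect you are proving the standard fact that for nilpotent $G$ a subset generates $G$ as soon as its image generates $G/[G,G]$; the paper avoids the Frattini machinery entirely at the cost of an induction. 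Both are short and standard. One small caveat: your parenthetical claim that a group generated by a conjugacy class is automatically finitely generated is false in general (the infinite alternating group is generated by the class of $3$-cycles but is not finitely generated), so the non-generator argument genuinely needs a finiteness hypothesis; this is harmless here because the subsection containing the lemma declares $G$ to be finite throughout, but you should justify finite generation by that standing assumption rather than by the parenthetical.
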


\begin{proof}
  We proceed by induction on the nilpotency class of $G$. If $G$ is abelian, then $|X|=1$ and $G$ is cyclic.
  Otherwise let $Z$ be the center of $G$. Then $G/Z$ is generated by $\{xZ\mid x\in X\}$. By induction hypothesis, $G/Z$ is cyclic. It follows that $G$ is abelian and hence $|X|=1$ and $G$ is cyclic.
\end{proof}

\begin{thm}\label{thm:char_zero-solvable}
Let $G$ be a finite solvable non-cyclic group, and 
$V \in \yd{\fie G}$ such that 
$\supp V$ is a conjugacy class of $G$ generating $G$ and $\dim\NA(V)<\infty$. Then $V$ is absolutely simple 
and isomorphic to one of the Yetter-Drinfeld modules
of Examples~\emph{\ref{exa:affine}, \ref{exa:T}, \ref{exa:S4a}} and \emph{\ref{exa:S4b}}. 
In particular, $\vert G \vert$ is an even integer and $\gcd(\vert G \vert, 105) \neq 1$.
\end{thm}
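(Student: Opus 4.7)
The plan is to reduce the statement to Theorem~\ref{thm:char_zero} via Lemma~\ref{lem:nilpotent}, and then invoke the lemma at the end of Subsection~\ref{subsec:main-examples} for the arithmetic consequence on $\vert G\vert$.

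First I would verify that the hypothesis $\Rad(G)\ne Z^*(G)$ of Theorem~\ref{thm:char_zero} holds. Since $G$ is solvable, $\Rad(G)=G$. Now $G$ is generated by the conjugacy class $\supp V$, so if $G$ were nilpotent, Lemma~\ref{lem:nilpotent} would force $G$ to be cyclic, contradicting the hypothesis. Hence $G$ is non-nilpotent, which is equivalent to $Z^*(G)\subsetneq G$. Therefore $Z^*(G)\subsetneq G=\Rad(G)$, and Theorem~\ref{thm:char_zero} applies. This immediately yields that $V$ is absolutely simple and isomorphic to one of the Yetter–Drinfeld modules of Examples~\ref{exa:affine}, \ref{exa:T}, \ref{exa:S4a} or \ref{exa:S4b}.

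For the arithmetic statement, I would apply the lemma at the end of Subsection~\ref{subsec:main-examples} (just before Remark~\ref{rem:FK5}), which asserts that whenever the braided vector space $(\fie X,c^q)$ from any of these four examples is realizable over a finite group $G$, one has $\vert G\vert$ even and $\gcd(\vert G\vert,105)\ne 1$. Since the Yetter–Drinfeld module $V$ furnishes such a realization (its support is a conjugacy class generating $G$, hence the realization is strong), the conclusion follows.

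There is essentially no obstacle here: the whole work is already packed into Theorem~\ref{thm:char_zero} and Lemma~\ref{lem:nilpotent}. The only subtle point to check is that the hypothesis of Theorem~\ref{thm:char_zero}, phrased as $\Rad(G)\ne Z^*(G)$, is correctly triggered by non-cyclicity together with solvability and with $G$ being generated by a conjugacy class; this is precisely the content of Lemma~\ref{lem:nilpotent} combined with $\Rad(G)=G$.
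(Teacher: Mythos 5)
Your proposal is correct and coincides with the paper's own (one-line) proof: solvability gives $\Rad(G)=G$, non-cyclicity plus generation by a conjugacy class gives $G\ne Z^*(G)$ via Lemma~\ref{lem:nilpotent}, and Theorem~\ref{thm:char_zero} does the rest, with the parity and $\gcd$ claims supplied by the realizability lemma in Subsection~\ref{subsec:main-examples}. No issues.
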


\begin{proof}
 Theorem~\ref{thm:char_zero} applies because $\Rad G=G \ne Z^*(G)$ by Lemma~\ref{lem:nilpotent}. 
\end{proof}

\begin{cor}\label{cor:affine-indecomposable}
Let $G$ be a finite group and let $X$ be a conjugacy 
class of $G$ which as a rack is affine, indecomposable, and non-trivial.
Let $V \in \yd{\fie G}$ such that $\supp V = X$. 
If $X$ is not in \eqref{eq:list-affine-simple-racks}, then 
$\dim \NA(V) = \infty$.
\end{cor}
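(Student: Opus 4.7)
The plan is to assume $\dim \NA(V) < \infty$ for contradiction and reduce to Theorem~\ref{thm:char_zero-solvable}.

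First I would reduce to the case $G = \langle X \rangle$. Since the braiding on $V$ is determined by the grading $V = \bigoplus_{x \in X} V_x$ together with the action of elements of $X$ on $V$, the Nichols algebra is unchanged when $G$ is replaced by the subgroup $\langle X \rangle$, and $V$ naturally restricts to a Yetter-Drinfeld module over $\fie \langle X \rangle$. To apply the main theorem one must verify that $X$ is still a single conjugacy class of $\langle X \rangle$: this uses indecomposability of $X$, because the conjugation action of $\langle X \rangle$ on $X$ has the same orbits as the action of $\Inn X$, and $\Inn X$ acts transitively on $X$ by assumption.

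Next I would establish that the reduced group $G$ is solvable and non-cyclic. By Remark~\ref{rem:affine_indec}, the indecomposable affine hypothesis gives $X \simeq \Aff(\varGamma, \aut)$ for some finite abelian group $\varGamma$ and a fixed-point free $\aut \in \Aut \varGamma$. Corollary~\ref{cor:abelian_centralizers}\ref{item:abelian-cetralizers} then yields that $G$ is solvable. Since $X$ is non-trivial, $|X|>1$, so $G$ is non-abelian and in particular non-cyclic.

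Theorem~\ref{thm:char_zero-solvable} now applies and forces $V$ to be isomorphic to one of the Yetter-Drinfeld modules in Examples~\ref{exa:affine}, \ref{exa:T}, \ref{exa:S4a} or \ref{exa:S4b}. The supports that arise in Examples~\ref{exa:affine} and \ref{exa:T} are precisely the affine racks listed in~\eqref{eq:list-affine-simple-racks}. The supports $\Oc^4_2$ and $\Oc^4_4$ of the remaining two examples have six elements; any indecomposable affine rack of cardinality six would be of the form $\Aff(\ndZ/6\ndZ, \aut)$ with $\aut \in \{\pm\id\}$, and neither choice gives a fixed-point free automorphism in the non-trivial case, so these two racks cannot be affine and indecomposable. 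Hence $X$ must lie in~\eqref{eq:list-affine-simple-racks}, contradicting the hypothesis.

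The only delicate point I anticipate is the initial reduction to $G = \langle X \rangle$, where indecomposability of the rack is essential to preserve the ``$\supp V$ is a single conjugacy class'' assumption of the main theorem. Once that is arranged, the result is a direct consequence of Theorem~\ref{thm:char_zero-solvable} together with the short size-and-automorphism inspection of the supports in Examples~\ref{exa:S4a} and~\ref{exa:S4b}.
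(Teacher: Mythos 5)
Your proof is correct and follows essentially the same route as the paper: reduce to the case where $X$ generates $G$, deduce solvability of $G$ from Remark~\ref{rem:affine_indec} and Corollary~\ref{cor:abelian_centralizers}, apply Theorem~\ref{thm:char_zero-solvable}, and then rule out the supports $\Oc^4_2$ and $\Oc^4_4$. The only (harmless) difference is in that last exclusion: you verify directly that $\ndZ/6\ndZ$ admits no fixed-point free automorphism, whereas the paper argues that $[\Inn X,\Inn X]=\AG{4}$ is not nilpotent, contradicting Proposition~\ref{pro:affine-derived}\ref{item:affine-derived-2}; both are valid.
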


\begin{proof} We may assume that $X$ generates $G$.
The hypothesis says that $X\simeq  \Aff(\varGamma,\aut)$ where $\varGamma$
is a finite non-trivial abelian group  and $\aut$ is a fixed-point free automorphism  of $\varGamma$, see Remark~\ref{rem:affine_indec}.
Then $G$ is solvable by Corollary \ref{cor:abelian_centralizers} \ref{item:abelian-cetralizers},
but non-abelian because $\varGamma \neq \{e\}$. 
Assume that $\dim \NA(V) < \infty$. Then Theorem~\ref{thm:char_zero-solvable}
says that either $X$ is in \eqref{eq:list-affine-simple-racks} or 
$X \simeq \cO_2^4$ or $X \simeq \cO_4^4$. In the last two possibilities, 
$\Inn X \simeq \SG{4}$ but  since $[\SG{4},\SG{4}]=\AG{4}$ is not nilpotent, Proposition~\ref{pro:affine-derived}\ref{item:affine-derived-2} implies that $X$ is not affine.
Thus 
$X$ is in \eqref{eq:list-affine-simple-racks}.
\end{proof}

\subsection{Simple racks}\label{subsec:simple-racks-appl}
Simple racks were classified in \cite{MR1994219} and \cite{MR0682881}; those 
that can be realized as conjugacy classes of finite groups are of two kinds:

\begin{enumerate}[leftmargin=*,label=\rm{(\Roman*)}]
\item\label{item:simple-affine} simple racks whose order is a power of a prime,

\medbreak
\item\label{item:simple-non-abelian} simple racks whose order is divisible by at least two primes.
\end{enumerate}

\medbreak
Simple racks in class \ref{item:simple-non-abelian} 
are not affine and are related to finite non-abelian simple groups; for instance conjugacy classes in such groups are in class
\ref{item:simple-non-abelian}. Theorem~\ref{thm:char_zero} does not apply immediately to Yetter-Drinfeld modules over simple groups.

\medbreak
Simple racks with $q = p^n$ elements ($p$ prime), i.e., in class \ref{item:simple-affine},
are affine. Precisely, given such a rack $X$, there exists $T \in GL(n, \ndF_p)$ 
whose minimal polynomial $f \in \ndF_p[t]$ is monic irreducible, such that 
$X \simeq \Aff(\ndF_p^n,T) \simeq \Aff(\ndF_q, d)$
where $f$ is the minimal polynomial of $d \in \ndF_q$. 
From the description in \cite{MR1994219} we easily conclude:

\begin{rem}\label{rem:simple-solvable}
Let $X$ be a simple rack that can be realized as a conjugacy class of a finite group. Then $\Inn X$ is solvable if and only if $X$ is of class \ref{item:simple-affine}. 
\end{rem}

\begin{proof}[Proof of Theorem~\ref{thm:list-affine-simple-racks}]
Since simple racks are indecomposable, Theorem~\ref{thm:list-affine-simple-racks}
follows from Corollary~\ref{cor:affine-indecomposable}.
\end{proof}

\subsection{Nichols algebras over groups with odd order}

Recall from \cite[Definition 2.3]{MR3713037} that a finite rack
$X$ is \emph{of type C}  when there are  a decomposable subrack
$Y = R\coprod S$  and elements $r\in R$, $s\in S$ such that
\begin{align}
\label{eq:typeC-rack-inequality}
&r \triangleright s \neq s \qquad (\text{hence } s \triangleright r \neq r),
\\ \label{eq:typeC-rack-indecomposable}
&R = \Oc^{\Inn  Y}_{r}, \qquad S = \Oc^{\Inn  Y}_{s},
\\
\label{eq:typeC-rack-dimension} &\min \{\vert R \vert, \vert S \vert \}  > 2 \quad \text{ or } \quad
\max \{\vert R \vert, \vert S \vert \}  > 4.
\end{align}
The next result is a handy consequence of \cite[Theorem 2.1]{MR3656477},
cf. Proposition~\ref{pro:consequence}.

\begin{lem}\label{th:typeC} \cite[2.9]{MR3713037}
If $\Oc$ is a finite rack of type C, then
$\dim\NA(\Oc, \bq) = \infty$ for every finite faithful 2-cocycle $\bq$. \qed
\end{lem}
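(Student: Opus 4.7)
\textit{Proof proposal.} The plan is to produce, inside $(\fie \Oc, c^{\bq})$, a braided subspace to which Proposition~\ref{pro:consequence} applies and delivers a contradiction. Since $\bq$ is finite and faithful, $(\fie \Oc, c^{\bq})$ admits a strong realization $(V, g, \nu)$ over a finite group $G$, in the sense of Subsection~\ref{subsec:bvs-rack}. Note that $G$ is non-abelian, thanks to~\eqref{eq:typeC-rack-inequality}.

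First I would reduce to the subrack $Y = R \coprod S$. The restriction $(\fie Y, c^{\bq}|_Y)$ is a braided subspace of $(\fie \Oc, c^{\bq})$, hence there is an inclusion of Nichols algebras $\NA(\fie Y, c^{\bq}|_Y) \hookrightarrow \NA(\Oc, \bq)$, and it suffices to show $\dim \NA(\fie Y, c^{\bq}|_Y) = \infty$. Let $H \leq G$ be the subgroup generated by $g(Y)$; then $\fie Y \in \yd{\fie H}$. The conjugation action of $H$ on $g(Y)$ factors through $\Inn Y$, so by~\eqref{eq:typeC-rack-indecomposable} the $H$-orbits on $g(Y)$ are exactly $g(R)$ and $g(S)$. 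Therefore $\fie Y = \fie R \oplus \fie S$ as a direct sum of YD-modules over $\fie H$.

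Next I would extend scalars to a finite extension $L$ of $\fie$, using Lemma~\ref{lem:CurtisReiner69.9} to preserve semisimplicity, so that $L \otimes_{\fie} \fie R$ and $L \otimes_{\fie} \fie S$ decompose into absolutely simple YD-modules over $LH$. Pick absolutely simple summands $V_0$ and $W_0$ of these decompositions, and let $H_0$ be the subgroup of $H$ generated by $\supp V_0 \cup \supp W_0$. The support $\supp V_0 \subseteq g(R)$ is a single $H_0$-orbit; because $H$ acts transitively on $g(R)$ by the previous paragraph, in fact $\supp V_0 = g(R)$, and similarly $\supp W_0 = g(S)$. In a rack realization every homogeneous component is one-dimensional, so $\dim V_0 = |R|$ and $\dim W_0 = |S|$. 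Moreover, $c_{W_0, V_0} c_{V_0, W_0} \neq \id$: direct computation with the formula for $c^{\bq}$ applied to $v_r \otimes v_s$, using $r \triangleright s \neq s$ from~\eqref{eq:typeC-rack-inequality}, shows that this vector is not fixed by $c^2$.

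At this point Proposition~\ref{pro:consequence} applies to $V_0 \oplus W_0$ viewed over $H_0$: if $\NA(V_0 \oplus W_0)$ were finite-dimensional, we would obtain $\min\{|R|, |S|\} \leq 2$ and $\max\{|R|, |S|\} \leq 4$, contradicting~\eqref{eq:typeC-rack-dimension}. Hence $\NA(V_0 \oplus W_0)$, and a fortiori $\NA(\Oc, \bq)$, is infinite-dimensional. The main technical point is to ensure that absolutely simple YD-summands inside $\fie R$ and $\fie S$ inherit the \emph{full} supports $R$ and $S$ (rather than proper subsets); this is where~\eqref{eq:typeC-rack-indecomposable} is indispensable, via the identification of $H$-orbits on $g(Y)$ with $\Inn Y$-orbits on $Y$.
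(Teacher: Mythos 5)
Your argument is correct and is essentially the intended one: the paper gives no proof of its own, citing \cite[2.9]{MR3713037} and observing just beforehand that the lemma is a consequence of \cite[Theorem 2.1]{MR3656477}, i.e.\ of Proposition~\ref{pro:consequence} applied to the two absolutely simple summands $\fie R$ and $\fie S$ over the subgroup generated by $g(Y)$ --- exactly your route. The only caveat is that $\bq$ is allowed to be a non-abelian cocycle $\bq\colon \Oc\times\Oc\to GL(n,\fie)$, so homogeneous components need not be one-dimensional; but then any absolutely simple summand supported on all of $g(R)$ has dimension at least $\vert R\vert$ (and similarly for $S$), and the contradiction with \eqref{eq:typeC-rack-dimension} via Proposition~\ref{pro:consequence} goes through unchanged.
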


The proof of the next result combines \cite[Theorem 2.1]{MR4732368}
with Theorem \ref{thm:char_zero-solvable}.

\begin{thm}\label{thm:solvable-210-order-abelian-or-typeC} 
Let $G$ be a group of odd order and let $V \in \yd{\fie G}$. 
If $\NA(V)$ is finite-dimensional,
then $X = \supp V$ is an abelian rack. If additionally $\fie$ is algebraically closed, 
then $V$ is of diagonal type.
\end{thm}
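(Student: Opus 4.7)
The plan is to invoke the Feit--Thompson theorem and then combine Theorem~\ref{thm:char_zero-solvable} with the rack-theoretic input of \cite[Theorem~2.1]{MR4732368} that the authors cite. Since $|G|$ is odd, Feit--Thompson yields that $G$ is solvable for free. Setting $X=\supp V$ and replacing $G$ by the subgroup $\langle X\rangle$---which is normal in $G$ (as $X$ is closed under conjugation), again of odd order, and over which $V$ remains a Yetter--Drinfeld module with unchanged braiding and Nichols algebra---I may assume that $X$ generates $G$.

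The key step, and the one I expect to be the main hurdle, is the reduction to the case where $X$ is a single conjugacy class of $G$. For this I would appeal to \cite[Theorem~2.1]{MR4732368}: if $X$ were a union of two or more $G$-conjugacy classes, that result should produce a decomposable subrack $R\coprod S\subseteq X$ fulfilling \eqref{eq:typeC-rack-inequality}--\eqref{eq:typeC-rack-dimension}, so that $X$ is of type C. The 2-cocycle encoding the braiding of $V$ is faithful (as $V\ne 0$), so Lemma~\ref{th:typeC} would then force $\dim\NA(V)=\infty$, contradicting the hypothesis. Thus $X$ is a single conjugacy class of $G$ that generates $G$, and Theorem~\ref{thm:char_zero-solvable} applies: either $G$ is cyclic, in which case $X$ is a singleton and trivially an abelian rack, or else $V$ is isomorphic to one of the four exceptional Yetter--Drinfeld modules of Subsection~\ref{subsec:main-examples}---each of which sits over a group of even order, excluded by the odd-order hypothesis.

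For the second claim, assume $\fie$ is algebraically closed. Since $X$ is an abelian rack, its elements pairwise commute (the identity $x\triangleright y=y$ unpacks to $xyx^{-1}=y$), so $G=\langle X\rangle$ is abelian of odd order. Every irreducible representation of $G$ is then one-dimensional, and in the decomposition $V=\bigoplus_i M(g_i,\chi_i)$ each isotypic summand reduces to a one-dimensional $V_{g_i}$; a homogeneous basis $(v_i)$ thus satisfies $c(v_i\otimes v_j)=\chi_j(g_i)\,v_j\otimes v_i$, exhibiting $V$ as of diagonal type.
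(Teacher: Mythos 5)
Your reduction to the case where $X$ is a single conjugacy class is the step that fails, and it cannot be repaired, because the statement you would end up proving (that $\supp V$ is a single class, hence a singleton) is false. Take $G=C_3\times C_3$ and $V$ of diagonal type of Cartan type $A_2$ at a primitive third root of unity, supported on two independent generators $g_1\ne g_2$: then $\dim\NA(V)<\infty$ while $\supp V$ is a union of two distinct (singleton) conjugacy classes generating $G$. The flaw is the claim that a union of two or more conjugacy classes yields a decomposable subrack satisfying \eqref{eq:typeC-rack-inequality}--\eqref{eq:typeC-rack-dimension}: splitting $X$ into classes gives condition \eqref{eq:typeC-rack-inequality} only if some $r\in R$ and $s\in S$ fail to commute, and nothing forces that. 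The conclusion of the theorem is precisely that all elements of $X$ commute, not that $X$ is one class, so the relevant dichotomy is ``abelian versus non-abelian,'' not ``one class versus several,'' and no citation can rescue a reduction whose target statement is wrong.

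The paper instead localizes at a single non-commuting pair. Assuming $X$ is not abelian, pick $r,s\in X$ with $r\triangleright s\ne s$, set $L=\langle r,s\rangle$, $R=\Oc_r^{L}$, $S=\Oc_s^{L}$ and $Y=R\cup S$. Since $|R|$ and $|S|$ divide the odd number $|L|$ and are $\ne 1$, both are at least $3$. If $R\ne S$, then $Y$ is a decomposable subrack satisfying \eqref{eq:typeC-rack-inequality}--\eqref{eq:typeC-rack-dimension}, so $X$ is of type C and Lemma~\ref{th:typeC} gives $\dim\NA(V)=\infty$. If $R=S$, then $Y$ is a single conjugacy class generating the non-abelian, odd-order (hence solvable and non-cyclic) group $L$, and $W=\bigoplus_{g\in Y}V_g\in\yd{\fie L}$ is a braided subspace of $V$; Theorem~\ref{thm:char_zero-solvable} forces $\dim\NA(W)=\infty$, since the surviving examples all live over groups of even order, whence $\dim\NA(V)=\infty$. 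Your treatment of the genuine single-class case via Theorem~\ref{thm:char_zero-solvable}, and your derivation of diagonal type from abelianness of $X$ (semisimplicity over the abelian group $\langle X\rangle$ in characteristic $0$), are both fine; only the reduction needs to be replaced by this local dichotomy.
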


\begin{proof} 
Assume that $X$ is  not abelian, i.e., there exist $r,s \in X$ such that $r\triangleright s \neq s$. Let $L = \langle r,s \rangle \leq G$,
$R= \Oc_r^{L}$, $S= \Oc_s^{L}$ and $Y= R\cup S$; 
then $\vert R \vert \neq 1$ and 
$\vert S \vert \neq 1$.

\medbreak
If $R\neq S$, 
then  $Y$ is a decomposable subrack of $X$
that satisfies \eqref{eq:typeC-rack-indecomposable}, because  $L = \langle Y\rangle$ so 
$R=\Oc_r^{L}=\Oc_r^{\Inn  Y}$ and $\Oc_s^{L}=\Oc_s^{\Inn  Y}$.  
Now \eqref{eq:typeC-rack-inequality} holds by assumption. 
Let $p, q$ be primes such that $p$ divides $\vert R \vert$ and $q$
divides $\vert S \vert$. Since $\vert R \vert$ and $\vert S \vert$ divide
$\vert L \vert$ and fortiori $\vert G \vert$, 
  $\min \{\vert R \vert, \vert S \vert \} \geq  \min \{p, q \} \geq 3$, thus 
\eqref{eq:typeC-rack-dimension} holds. Hence $X$ is of type C
and $\dim \NA(V) = \infty$.

\medbreak
Finally, if $s \in \Oc_r^{L}$, then $ Y = \Oc_r^{L}$ and $L = \langle Y \rangle$.
Let $W = \bigoplus_{g \in Y} V_G$; clearly $W \in \yd{\fie L}$.
Now $G$ and $L$ are solvable by the Feit-Thompson Theorem, and $L$ is not abelian by assumption.
Hence $\dim \NA(W) = \infty$ by Theorem \ref{thm:char_zero-solvable}.
Since $W$ is a braided subspace of $V$, we conclude that $\dim \NA(V) = \infty$.
\end{proof}

The Nichols algebras appearing in Theorem~\ref{thm:solvable-210-order-abelian-or-typeC} can be described more explicitly and are well studied.

\begin{rem}\label{rem:diagonal-odd-order}
Assume that $\fie$ is algebraically closed.
Let $G$ be a group of odd order and let $V \in \yd{\fie G}$ such that
$\dim \NA(V) <\infty$; thus
$V$ is of diagonal type by Theorem~\ref{thm:solvable-210-order-abelian-or-typeC}. 
Let $(q_{ij})_{1 \le i,j \le \dim V}$ be the braiding matrix
of $V$; then all $q_{ij}$'s  have odd order. 
By inspection of the Tables in \cite{MR2462836} there are three possibilities:
\begin{enumerate}[leftmargin=*,label=\rm{(\roman*)}]
\item  
$V$ is of Cartan type, hence  $\dim \NA(V)$ is known to be odd. 
The defining relations are described in \cite[Theorem 6.2]{MR2630042}.

\medbreak
\item 
The Dynkin diagram of $V$ is in row 5 of Table 1 in \cite{MR2462836}; it 
depends on two parameters, 
$\zeta$ of order 3 and $q\notin \{0,1,\zeta,\zeta^2\}$:
\begin{center}
    \Dchaintwo{$\zeta$}{$q^{-1}$}{$q$}
\end{center}
Set $N = \ord q$, 
$M = \ord \zeta q^{-1}$. Then $\dim \NA(V) = 3^2MN$ is odd, whenever $N$ is odd,
see \cite[\S\, 7.2]{MR3736568}.

\medbreak
\item The Dynkin diagram of $V$ is in \cite[row 18, Table 2]{MR2462836}, namely 
\begin{center}
\Dchainthree{}{$\zeta$}{$\zeta^{-1}$}{$\zeta$}{$\zeta^{-1}$}{$\zeta^{-3}$}
\quad and\quad  
\Dchainthree{}{$\zeta$}{$\zeta^{-1}$}{$\zeta^{-4}$}{$\zeta^4$}{$\zeta^{-3}$}
\end{center}
where $\zeta$ is a primitive 9th root of one. 
Here $\dim \NA(V) = 3^{22}$, see \cite[\S\, 7.3]{MR3736568}.
\end{enumerate}
In particular, $\dim \NA(V)$ is odd in all cases.

 Note that
all character values of $G$ are roots of unity of odd order, since $G$ has odd order.
Therefore, the odd-dimensionality of $\NA (V)$ already follows from the structure of Kharchenko's PBW basis, see \cite[Definition~7, Theorem~2]{MR1763385}.
\end{rem}

\subsection*{Acknowledgements}
The work of N.~A. was partially supported by CONICET (PIP 11220200102916CO), FONCyT-ANPCyT (PICT-2019-03660), by the Secyt (UNC), by the International Center of Mathematics, Southern University
of Science and Technology, Shenzhen and by the National Science Foundation under Grant No. DMS-1928930, while he was in residence at the SLMath Research Institute in Berkeley, California, during July 2024.

The work of L.~V. was partially supported by
the project OZR3762 of Vrije Universiteit Brussel and
FWO Senior Research Project G004124N.

The authors thank
the Mathematisches Forschungsinstitut Oberwolfach for providing a productive environment during the
Mini-Workshop ``Bridging Number Theory and Nichols Algebras via Deformations'' in February 2024, where this project was initiated.

\bibliographystyle{abbrv}
\bibliography{refs.bib}

\end{document}